\numberwithin{equation}{section}
\def\argmin{{\rm argmin}}
\def\eqnok#1{(\ref{#1})}
\newcommand{\tsum}{\textstyle\sum}
\newcommand{\bbe}{\mathbb{E}}
\newtheorem{thm}{Theorem}
\newtheorem{lem}[thm]{Lemma}
\newtheorem{cor}[thm]{Corollary}
\newtheorem{prop}[thm]{Proposition}
\newcommand{\bbr}{\Bbb{R}}
\newcommand{\nn}{\nonumber}
\def\vgap{\vspace*{.1in}}
\title{Dynamic Stochastic Approximation \\
for Multi-stage Stochastic Optimization}
\author{
     Guanghui Lan*
    \thanks{H. Milton Stewart School of Industrial and Systems
    Engineering, Georgia Institute of Technology, Atlanta, GA, 30332.
    (email: {\tt george.lan@isye.gatech.edu}).}
    \and
  Zhiqiang Zhou
    \thanks{H. Milton Stewart School of Industrial and Systems
    Engineering, Georgia Institute of Technology, Atlanta, GA, 30332.
    (email: {\tt zzhoubrian@gatech.edu}). }
}
\date{\today}
\begin{document}
\maketitle

\begin{abstract}
In this paper, we consider multi-stage stochastic optimization problems with convex objectives and conic constraints at each stage.
We present a new stochastic first-order method, namely the dynamic stochastic approximation (DSA) algorithm, for solving these
types of stochastic optimization problems. We show that DSA can achieve an optimal  ${\cal O}(1/\epsilon^4)$ rate of convergence
in terms of the total number of required scenarios when applied to a three-stage stochastic optimization problem.
We further show that this rate of convergence
can be improved to ${\cal O}(1/\epsilon^2)$ when the
objective function is strongly convex.  We also discuss variants of DSA for solving more general multi-stage
stochastic optimization problems with the number of stages $T > 3$. The developed DSA algorithms only need to go through the scenario tree once in order to
compute an $\epsilon$-solution of the multi-stage stochastic optimization problem. As a result,
the memory required by DSA only grows linearly with respect to the number of stages. To the best of our knowledge,
this is the first time that stochastic approximation type methods are generalized for
multi-stage stochastic optimization with $T \ge 3$.
\end{abstract}

\section{Introduction} \label{sec_intro}
Multi-stage stochastic optimization aims at optimal decision-making over multiple periods of time, where
the decision in the current period has to take into account what will happen in the future.
This type of decision-making is very important to a few applications areas,
including finance, logistics, robotics and clinic trials etc.
In this paper, we are interested in solving a class of multi-stage stochastic optimization problems given by
%we often need to make optimal decisions over a certain period of
%time and a typical multi-stage stochastic optimization problem is given by
%Multi-stage stochastic optimization problems are well known in the stochastic programming community, and are becoming more common in applications. In this paper, the problem of interest is formed as
%The main problem of interest in this paper is the multi-stage stochastic optimization problem given by
\begin{equation}\label{multi-stage prob1}
\begin{array}{l}
\min h^1(x^1,c^1) +\quad \bbe_{|\xi^1} \left[\min h^2(x^2,c^2) \quad+ \bbe_{|\xi^{[2]}}\left[ \ldots  +\quad \bbe_{|\xi^{[T-1]}}\left[\min \ h^T(x^T, c^T) \right] \right] \right]\\
  \text{s.t.} \ A^1 x^1 - b^1 \in K^1 \quad \quad \text{s.t.} \ A^2x^2 - b^2 -B^2x^1 \in K^2, \quad \quad \text{s.t.} \ A^Tx^T - b^T-B^Tx^{T-1} \in K^T, \\
   \quad\quad    x^1\in X^1,  \ \ \ \quad\quad \quad\quad \quad \ x^2\in X^2,\ \ \ \quad\quad\quad\quad\quad\quad\quad\quad \quad\ x^T\in X^T.
\end{array}
\end{equation}
Here $T$ denotes the number of stages, $h^t(\cdot,c^t)$ are relatively simple convex functions, $K^t$ are closed convex cones,
$X^t \subseteq \bbr^{n_t}$ are compact convex sets for some $n_t >0$,
$h^t: X^t \to \bbr$ are relatively simple convex functions, and
$A^t$ denote the linear mappings from $\bbr^{n_t}$ to $\bbr^{m_t}$ for some $m_t > 0$.
Moreover, $\xi^1 := (A^1,b^1,c^1)$ is a given deterministic vector,
$\xi^t := (A^t,b^t,B^t,c^t)$, $t=2, \ldots, T$, are the random vectors supported on $\Xi^t$
at stage $t$. Throughout this paper, we use $\xi^{[t]} := (\xi^1,\ldots, \xi^t)$ to denote the stochastic process up to time period $t$,
 and $\bbe_{|\xi^{[t]}}(\cdot) \equiv \bbe[\cdot | \xi^{[t]}]$ denote the expectation
 conditional on $\xi^{[t]}$. It is worth noting that $\xi^{[1]} = \xi^1$ and that $\bbe_{|\xi^1}[\cdot] \equiv \bbe_{|\xi^{[1]}}[\cdot] = \bbe[\cdot]$ since
 $\xi^1$ is deterministic.
By defining value functions, we can write
problem~\eqnok{multi-stage prob1} equivalently as
\begin{equation} \label{multi-stage prob}
\begin{array}{ll}
\min \ h^1(x^1,c^1)+ v^{2}(x^1, \xi^{[1]})  \\
\text{ s.t.} \ \ A^1 x^1 - b^1 \in K^1,\\
\quad \quad \quad x^1 \in X^1,
\end{array}
\end{equation}
where the value factions $v^t$ are recursively defined by
\begin{equation}\label{Defi_sto_V_m}
\begin{array}{lll}
 v^t(x^{t-1}, \xi^{[t-1]}) &:=& \bbe [V^t(x^{t-1},\xi^{[t]}) | \xi^{[t-1]}], \ \ t = 2, \ldots, T-1,\\
V^t(x^{t-1},\xi^{[t]}) &:= & \min \ h^t(x^t,c^t)+ v^{t+1}(x^t, \xi^{[t]})\\
 &&\ \text{  s.t.} \ \ A^tx^t - b^t -B^tx^{t-1} \in K^t,\\
 & & \quad \quad \quad x^t\in X^t,
 \end{array}
 \end{equation}
 and
\begin{equation}\label{Defi_sto_V1_m}
 \begin{array}{lll}
  v^T(x^{T-1}, \xi^{[T-1]}) &:=& \bbe [V^T(x^{T-1},\xi^{[T]})| \xi^{[T-1]}],\\
V^T(x^{T-1},\xi^{[T]}) &:=&  \min \ h^T(x^T,c^T)\\
  &&\text{ s.t.} \ \ A^T x^T - b^T - B^T x^{T-1} \in K^T, \\
  & &\quad \quad \quad x^T\in X^T.
\end{array}
\end{equation}

In particular, if $h^t$ are affine, $K^t = \{0\}$ and $X^t$ are polyhedral, then problem~\eqnok{multi-stage prob1}
reduces to the well-known multi-stage stochastic linear programming problem (see, e.g., \cite{BirLou97,ShDeRu09}).
The incorporation of the nonlinear (but convex) objective function $h^t(x^t,c^t)$  and conic constraints $A^tx^t - b^t -B^tx^{t-1} \in K^t$
allows us to model a much wider class of problems. Moreover, if $T = 2$, then problem~\eqnok{multi-stage prob1}
is often referred to as a two-stage (or static) stochastic programming problem.
%for multi-stage stochastic
%describe the possible nonlinear relationship between the decision variables $x^t$ and $x^{t-1}$
%in any two consecutive stages.

In spite of its wide applicability, multi-stage stochastic optimization remains highly challenging to solve.
Many existing methods for multi-stage stochastic optimization are based on sample average approximation (see Nemirovski and Shapiro~\cite{ShaNem04} and
Shapiro~\cite{sha06}).
In this approach, one first generates a deterministic counterpart of \eqnok{multi-stage prob1} by replacing the expectations
with (conditional) sample averages. In particular, if the number of stages $T=3$, the total number of samples (a.k.a. scenarios)
cannot be smaller than ${\cal O}(1/\epsilon^4)$ in general. Once after a deterministic approximation of \eqnok{multi-stage prob1} is generated,
one can then develop decomposition methods to solve it to certain accuracy.
The most popular decomposition methods consist of stage-based and scenario-based decomposition method.
One widely-used stage-based method is the stochastic dual dynamic programming (SDDP) algorithm,
which is essentially an approximate cutting plane method, first presented by Pereira and Pinto \cite{pereira1991multi} and later studied
by Shapiro~\cite{Sha11}, Philpott et. al.~\cite{phil13-1}, Donohue and Birge~\cite{donohue2006abridged}, Hindsberger~\cite{hindsberger2014resa}, and Kozm\'{i}k and Morton~\cite{kozmik2015evaluating} etc.
This method has been shown to be effective for solving multi-stage stochastic optimization problems with a large number of stages, but
a small number of decision variables.
The progressive hedging algorithm by Rockafellar and Wets \cite{rockafellar1991scenarios} is a well-known  scenario-based decomposition method,
which basically applies an augmented Lagrangian method to penalize the violation of
the non-anticipativity constraints. Other interesting bundle type decomposition methods have also been
developed (see, e.g., \cite{HigSen91-1}).
These methods assume that the scenario tree has been generated and will go through the
scenario tree many times. Usually there are no performance guarantees
provided regarding their rate of convergence, i.e., the number of times one needs to go through the
scenario tree. In SDDP, one also needs to assume that random vectors
are stage-wise independent.

Recently, a different approach called stochastic approximation (SA) has attracted much attention
for solving  static stochastic optimization problems
given in the form of
\begin{equation} \label{sp}
\min_{x \in X} \left\{f(x) := \bbe_\xi[F(x, \xi)] \right\},
\end{equation}
where $X$ is a closed convex set, $\xi$ denotes the random vecctor and $F(\cdot, \xi)$
is a closed convex function.
Observe that when $T=2$,  problem~\eqnok{multi-stage prob1} can be cast in the form
of \eqnok{sp} and hence one can apply the aforementioned SA methods
to solve these two-stage stochastic optimization problems (see \cite{NJLS09-1,lns11}).
The basic SA algorithm, initially proposed by Robbins and Monro~\cite{RobMon51-1}, mimics the simple projected gradient descent method by replacing exact
gradient with its unbiased estimator.
Important improvements for the SA methods have been made by Nemirovski and Yudin~\cite{nemyud:83}
and later by Polayk and Juditsky \cite{pol90,pol92}.  During the past few years, significant progress has been made
in SA methods (e.g., \cite{NJLS09-1,Lan10-3,GhaLan12-2a,GhaLan13-1,GhaLan12,NO13-1,GhaLanZhang13-1,WangFangLiu16-1,WangMaGoldLiu17-1,DaiHePanBootsSong16-1}).
In particular, Nemirovski et. al. \cite{NJLS09-1} presented
a properly modified SA approach, namely, mirror descent SA for solving general nonsmooth convex SP problems.
Lan~\cite{Lan10-3} introduced an accelerated SA method, based on Nesterov's accelerated gradient method~\cite{Nest83-1},
for solving smooth, nonsmooth and stochastic optimization in a uniform manner.
Novel nonconvex SA methods and their accelerated versions have been studied in \cite{GhaLan12,GhaLanZhang13-1,GhaLan15-1,WangMaGoldLiu17-1}.
Some interesting progresses have also been made in \cite{WangFangLiu16-1,DaiHePanBootsSong16-1} for solving more complicated compositional
stochastic optimization problems. All these SA algorithms only need to access one single
$\xi_k$ at each iteration, and hence do not require much memory.
It has been shown in \cite{NJLS09-1,lns11} that SA methods can significantly outperform
the SAA approach for solving static (or two-stage) stochastic programming problems.
However, it remains unclear whether these SA methods can be generalized for multi-stage stochastic optimization
problems with $T \ge 3$.
%To the best of our knowledge, SA type methods have not been generalized for solving problem~\eqnok{multi-stage prob1}.

In this paper, we attempt to shed some light on this problem by presenting
a dynamic stochastic approximation (DSA) method for multi-stage stochastic optimization.
The basic idea of the DSA method is to apply an inexact primal-dual SA method
for solving the $t$-th stage optimization problem to
compute an approximate stochastic subgradient for its associated value functions $v^t$. In the pursuit of this idea,
we manage to resolve the following
difficulties. First, the first-order information
for the value function $v^{t+1}$ used to solve the $t$-stage subproblem is not only
stochastic, but also biased. We need to control the bias associated with such first-order information.
In addition,
we need to develop a relationship between the primal-dual gap and the
error associated with approximate stochastic subgradients.
Second, in order to establish the convergence of stochastic optimization subroutines
for solving the $t$-stage problem, we need to guarantee that the
variance of approximate stochastic subgradients and hence the dual multipliers associated with the $(t+1)$-stage problem are bounded,
while no such results exist in the current SA literature. Third,
we need to make sure that the errors associated with approximate stochastic subgradients do not accumulate
quickly as the number of stages $T$ increases. By properly addressing these issues, we were able to show
that the DSA method can achieve an optimal  ${\cal O}(1/\epsilon^4)$ rate of convergence
in terms of the number of random samples when applied to a three-stage stochastic optimization problem. We further show that
this rate of convergence can be improved to ${\cal O}(1/\epsilon^2)$ when the objective function is strongly convex.
To the best of our knowledge, this is the first time that this improved ${\cal O}(1/\epsilon^2)$ complexity
has been obtained for solving three-stage problems under the strong convexity setting.
Even though the value functions for these problems are still convex (rather than strongly convex), by exploiting the structural information that the cost function $h^t$ at
each stage is strongly convex, our
algorithm can compute the approximate stochastic
subgradients more efficiently than the more general situation where the cost function $h^t$ at each stage is convex.
Moreover, we discuss variants of the DSA method which exhibit optimal rate of convergence for solving more general multi-stage
stochastic optimization problems with $T > 3$. The developed DSA algorithms only need to go through the scenario tree once in order to
compute an $\epsilon$-solution of the multi-stage stochastic optimization problem. As a result, the required memory for DSA
increases only linearly with respect to $T$.
To the best of our knowledge,
this is the first time that stochastic approximation type methods are generalized to and their complexities are established for
multi-stage stochastic optimization.
It should be also mentioned that although the main motivation and contribution of this paper
lie on the theoretical side of stochastic optimization,
the developed DSA algorithm provides an effective approach for solving stochastic optimization problems with a large number of decision
variables and a relatively smaller number of stages such as for those arising from hierarchical operations management and clinical trials.

This paper is organized as follows. In Section 2, we
introduce the basic scheme of the DSA algorithm and establish its main convergence properties
for solving three-stage stochastic optimization problems.
In Section 3, we show that the convergence rate of the DSA algorithm can be significantly improved under the strongly convex assumption on the objective function at each stage.
and we then develop variants of the DSA method for solving more general form of \eqref{multi-stage prob1} with $T > 3$ in Section 4.
Finally, some concluding remarks are made in Section~\ref{sec-remark}.

%\begin{equation}\label{Stoch_case1}
%\begin{aligned}
%\min& \{\ h_1(x^1,c^1)+ &\bbe \{\min &\{\ h_2(x^2,c^2)+ \ ...\  & +\bbe[\min & \{\ h_T(x^T, c^T)\}]\}\}\}\\
%  \text{s.t.} &\ \ x^1\in X_1, & \text{s.t.} &\ A^2x^2 = b^2 +B^2x^1, &  \text{s.t.}&\ A^Tx^T = b^T+B^Tx^{T-1}, \\
%      &&  &\ x^2\in X_2,\ \ \ & &\ x^T\in X_T,
%\end{aligned}
%\end{equation}
%where $\xi^i \equiv (A^i,b^i,B^i,c^i)$ are random vectors at stage $i$, and the problem \eqref{Stoch_case1} is equivalent to
%\begin{align*}
%\min&\ h_1(x^1,c^1)+ \bbe_{\xi^2} [V_2(x^1,\xi^2)]\\
%\text{s.t.} &\ x^1\in X_1,
%\end{align*}

%
%\begin{equation}\label{Def_vb}
%\begin{aligned}
%\tilde V(u):= & \min\  h(x)+V(x) \\
%       & \text{ s.t.} \ \ Ax=b+Bu\\
%       &\ \ \ \ \ \ \  x\in X\\
%\tilde V(u):= & \max_{y\in Y}\min_{x\in X} \langle y,b+Bu-Ax\rangle+h(x)+V(x)
%\end{aligned}
%\end{equation}
%Let $(x_1^*,y_1^*)$ be an optimal solution of $\tilde V(u_1)$ for any $u_1$, i.e., $$\tilde V(u_1)=\langle y_1^*,b+Bu_1-Ax_1^*\rangle+h(x_1^*)+V(x_1^*).$$
%The lemma below will present a technique for finding an $\epsilon$-subgradient of the value function $\tilde V(u)$.

\vgap
\subsection{Notation and terminology}
%In this subsection, We define the concept of prox-function, which is also known as proximity control function or Bregman distance function. P
%rox-function has played an important role in the recent development of first-order methods for convex programming as a substantial generalization of the Euclidean projection.
For a closed convex set $X$, a function $\omega_X : X\mapsto R$ is called a distance generating function
with parameter $\alpha_X$, if $\omega_X$ is continuously differentiable and strongly convex with parameter $\alpha_X$ with respect to $\|\cdot\|$.
Therefore, we have
$$\langle y-x, \nabla\omega_X(y) - \nabla\omega_X(x) \rangle \geq \alpha_X \|y-x\|^2, \forall x,y\in X.$$
The prox-function associated with $\omega_X$ is given by
$$P_X(x,y) = \omega_X(y) - \omega_X(x) - \langle \nabla\omega_X(x), y-x \rangle, \forall x,y\in X. $$
%Similarly, for a function $\omega_Y: Y\mapsto R$, we have the associated prox-function given by
%$$D(v,u) = \omega_Y(u) - \omega_Y(v) - \langle \nabla\omega_Y(v), u-v \rangle, \forall u,v\in Y. $$
%Assume the prox-functions $P_X(\cdot,\cdot)$ and $D(\cdot,\cdot)$ are strongly convex with module $\alpha_X$ and $\alpha_Y$, i.e.,
It can be easily seen that
\begin{equation}\label{str_con_PD}
%\begin{aligned}
	P_X(x,y) \geq \tfrac{\alpha_X}{2}\|y-x\|^2,\  \forall x, y \in X.
%	D(u,v) \geq \tfrac{\alpha_Y}{2}\|u-v\|^2,&\  \text{for all u,v $\in$ Y},
%\end{aligned}
\end{equation}
If $X$ is bounded, we define the diameter of the set $X$ as
\begin{equation}\label{OmegaX}
\Omega_X^2 := \max_{x,y\in X} P_X(x,y).
\end{equation}
For a given closed convex cone $K_*$, we choose the distance generating function $\omega_{K_*} (y)= \|y\|_2^2/2$.
For simplicity, we often skip the subscript of $\|\cdot\|_2$ whenever we apply it to an unbounded set (such as a cone).

For a given closed convex set $X \subseteq \bbr^n$ and a closed convex function $V: X \to \bbr$,
$g(x)$ is called an $\epsilon$-subgradient of $V$ at $x \in X$ if
\begin{equation} \label{def_eps_sub}
V(y) \geq V(x) + \langle g(x), y-x\rangle -\epsilon \ \ \forall y \in X.
\end{equation}
The collection of all such $\epsilon$-subgradients of $V$ at $x$ is called
the $\epsilon$-subdeifferential of $V$ at $x$,
 denoted by $\partial_\epsilon V(x)$.

 Assume that $V$ is Lipschitz continuous in an $\epsilon$-neighborhood of $X$, i.e.,
\begin{equation} \label{def_neighbor_X}
 |V(y) - V(x)| \le M_0 \|y - x\|, \ \forall x,y \in X_\epsilon := \{ p \in \bbr^n: p = r+x, x \in X, \|r\| \le \epsilon\}.
 \end{equation}
We can show that
 \begin{equation}\label{bnd_subgradient}
 \|g(x)\|_* \le M_0 + 1 \ \ \forall x \in X.
 \end{equation}
 Indeed, if $\|\cdot\| = \|\cdot\|_2$, the result follows immediately by setting $d = \epsilon g(x) / \|g(x)\|_2$ and $y = x+d$ in \eqnok{def_eps_sub}.
 Otherwise, we need to choose $d$ properly s.t. $\|d\| = \epsilon$ and $\langle g(x), d \rangle = \epsilon \|g(x)\|_*$.
 It should be noted, however, that if $V$ is Lipschitz continuous over $X$ (rather than $X_\epsilon$),
 then one cannot guarantee the boundedness of an $\epsilon$-subgradient of $V$.
 %Denote $M = M_0+\|g(x)\|_*$.

\section{Three-stage problems with generally convex objectives} \label{sec_DSA}
Our goal in this section is to introduce the basic scheme of the DSA algorithm
and discuss its convergence properties. For the sake of simplicity, we
will focus on three-stage stochastic optimization problems with
simple convex objective functions in this section. Extensions to strongly convex cases and more general form of
multi-stage stochastic optimization problems will be studied in later sections.

\subsection{Value functions and stochastic $\epsilon$-subgradients}
%In this section, we develop a dynamic stochastic approximation (DSA) algorithm to solve
Consider the following three-stage stochastic programming problem:
\begin{equation}\label{3-stage prob}
\begin{aligned}
\min \, & h^1(x^1,c^1)+ &\bbe_{|\xi^1} [\min \, & \ h^2(x^2,c^2)& +\bbe_{|\xi^{[2]}}[\min \, & \ h^3(x^3, c^3)]]\\
  \text{s.t.} &\ A^1 x^1 - b^1 \in K^1 & \text{s.t.} &\ A^2x^2 - b^2 -B^2x^1 \in K^2, &  \text{s.t.}&\ A^3x^3 - b^3-B^3x^2 \in K^3, \\
      & x^1\in X^1,  \ \ \ &  &\ x^2\in X^2,\ \ \ & &\ x^3\in X^3.
\end{aligned}
\end{equation}
%Note that the first expectation in \eqnok{3-stage prob}
%is taken w.r.t. the random vector $\xi^2 \equiv (A^2, b^2, B^2, c^2)$ and
%the second one is the conditional expectation with respect to $\xi^3 \equiv (A^3, b^3, B^2, c^3)$ given $\xi^2$.
As a particular example, if $h^t(x^t,c^t) = \langle c^t, x^t\rangle$, $K^t = \{0\}$ and $X^t$ are polyhedronal,
then problem \eqnok{3-stage prob} reduces to a well-known three-stage stochastic linear programming problem.

We can write problem~\eqnok{3-stage prob} in a more compact form by using value functions as discussed in
Section~\ref{sec_intro}.
More specifically, let $V^3(x^{2},\xi^3|\xi^2)$ be the stochastic
value function at the third stage and $v^3(x^2)$ be the corresponding expected value function
conditionally on $\xi^{[2]}$:
\begin{equation}\label{Defi_sto_V1}
 \begin{array}{lll}
V^3(x^{2},\xi^{[3]}) &:=&  \min \ h^3(x^3,c^3)\\
  &&\text{ s.t.} \ \ A^3x^3 - b^3 - B^3x^{2} \in K^3, \\
  & &\quad \quad \quad x^3\in X^3.\\
v^3(x^2,\xi^{[2]}) &:=& \bbe [V^3(x^{2},\xi^{[3]})|\xi^{[2]}].
\end{array}
\end{equation}
We can then define the stochastic value function $V^2(x^{1},\xi^2)$ and its
corresponding (expected) value function as
\begin{equation}\label{Defi_sto_V}
\begin{array}{lll}
V^2(x^{1},\xi^{[2]}) &:= & \min \ \left\{ h^2(x^2,c^2)+ v^{3}(x^2, \xi^{[2]}) \right\}\\
 &&\ \text{  s.t.} \ \ A^2x^2 - b^2 -B^2x^{1} \in K^2,\\
 & & \quad \quad \quad x^2\in X^2.\\
v^2(x^1,\xi^1) &:=& \bbe [V^2(x^{1},\xi^{[2]}) |\xi^1] = \bbe [V^2(x^{1},\xi^2)] .
 \end{array}
 \end{equation}
Problem \eqref{3-stage prob} can then be formulated equivalently as
\begin{equation} \label{first-stage-cp}
\begin{array}{ll}
\min \ \left\{ h^1(x^1,c^1)+ v^{2}(x^1,\xi^1) \right\} \\
\text{ s.t.} \ \ A^1 x^1 - b^1 \in K^1,\\
\quad \quad \quad x^1 \in X^1.
\end{array}
\end{equation}
Throughout this paper, we assume that the expected value functions $v^2(x^1,\xi^1)$ and $v^3(x^2,\xi^{[2]})$, respectively,
are well-defined and finite-valued for a given $\xi^1$ and any $x^1 \in X^1$, and any $x^2 \in X^2, \xi^2 \in \Xi^2$ almost surely.
We observe that the assumption that the values functions are well-defined
holds under various regularity conditions (see Section 3.2 of \cite{ShDeRu14}
for a more detailed discussion).
It is also worth noting that in the above formulation, we
assume that the value functions $v^t$ depend on the immediately preceding
decisions $x^{t-1}$, rather than all earlier decisions $x^1, \ldots, x^{t-1}$ for the sake of convenience. In
the latter case, one can reformulate the
problems in the form of \eqnok{Defi_sto_V}
by introducing the so-called model state variables (Section 3.1.2 of \cite{ShDeRu14}).

In order to solve problem~\eqnok{first-stage-cp}, we need to
understand how to compute first-order information about the
value functions $v^2$ and $v^3$. Since both $v^2$ and $v^3$
are given in the form of (conditional) expectation, their exact first-order information
is hard to compute. We resort to the computation of a stochastic $\epsilon$-subgradient
of these value functions defined as follows.
\begin{definition}
$G(u,\xi^{[t]})$ is called a stochastic $\epsilon$-subgradient of the value function $v^t(u, \xi^{[t-1]}) = \bbe[V^t(u, \xi^{[t]})|\xi^{[t-1]}]$
if $G(u,\xi^{[t]})$ is an unbiased estimator of an $\epsilon$-subgradient of $v^t(u, \xi^{[t-1]})$ with respect to $u$, i.e.,
\begin{equation}\label{def_g}
\bbe [G(u,\xi) | \xi^{[t-1]}] = g(u, \xi^{[t-1]}) \ \ \mbox{and} \ \ g(u,\xi^{[t-1]}) \in \partial_\epsilon v^t(u, \xi^{[t-1]}).
\end{equation}
\end{definition}

To compute a stochastic $\epsilon$-subgradient of
$v^2$  (resp., $v^3$), we have to compute an approximate subgradient
of the corresponding stochastic value function $V^2(x^1, \xi^{[2]})$ (resp., $V^3(x^2, \xi^{[3])}$).
To this end, we
further assume that strong Lagrange duality holds
 for the optimization problems defined in \eqnok{Defi_sto_V} (resp.,\eqnok{Defi_sto_V1}) almost surely. In other words, these problems can be formulated
 as saddle point problems:
%\begin{equation}
\begin{align}
V^2(x^1, \xi^{[2]})&=\max_{y^2\in K^2_*}\min_{x^2\in X^2}\langle b^2+B^2x^1-A^2x^2, y^2\rangle + h^2(x^2,c^2)+ v^3(x^2, \xi^{[2]}),\label{primaldual2}\\
V^3(x^2, \xi^{[3]})&=\max_{y^3\in K^3_*}\min_{x^3\in X^3}\langle b^3+B^3x^2-A^3x^3, y^3\rangle + h^3(x^3,c^3),\label{primaldual3}
\end{align}
where $K^2_*$ and $K^3_*$ are corresponding dual cones to $K^2$ and $K^3$, respectively.
One set of sufficient conditions to guarantee the equivalence between \eqnok{Defi_sto_V} (resp.,\eqnok{Defi_sto_V1}) and \eqnok{primaldual2} (resp., \eqnok{primaldual3})
is that \eqnok{Defi_sto_V} (resp.,\eqnok{Defi_sto_V1})  is solvable and the slater condition holds~\cite{Rocka70}.
%For the sake of notational convenience, let us consider

Observe that in order to solve \eqnok{primaldual2} and \eqnok{primaldual3}, we need to solve
a more generic saddle point problem:
\begin{equation} \label{primaldual2.2}
V(u,\xi) \equiv V(u, (A, b, B, C)):= \max_{y\in K_*}\min_{x\in X}\langle b+Bu-Ax, y\rangle + h(x,c)+ \tilde v(x),
\end{equation}
where $A: \bbr^n \to m$ and $B: \bbr^{n_0} \to m$ denote the linear mappings.
For example, \eqnok{primaldual3} is a special case of  \eqnok{primaldual2.2} with
$u = x^2$, $y = y^3$, $K_* = K^3_*$, $b=b^3$, $B=B^3$, $A=A^3$, $h = h^3$ and $\tilde v = 0$.
It is worth noting that the first stage problem can also be viewed as a special case of \eqnok{primaldual2.2},
since \eqnok{first-stage-cp} is equivalent to
\begin{equation} \label{primaldual1}
\max_{y \in K_*^1} \min_{x^1 \in X^1} \ \left\{ \langle b^1-A^1x^1, y^1\rangle + h^1(x^1,c^1)+ v^{2}(x^1,\xi^1) \right\}.
\end{equation}

Let
\[(x_*,y_*) \in Z \equiv X \times K_*\]
be a pair of optimal solutions of the saddle point problem \eqref{primaldual2}, i.e.,
\begin{align}
V(u,\xi) &= \langle y_*,b+Bu-Ax_*\rangle+h(x_*,c)+ \tilde v(x_*)
= h(x_*,c)+ \tilde v(x_*), \label{strong_duality}
\end{align}
where the second identity follows from the complementary slackness of Lagrange duality.
Below we provide a different characterization of an $\epsilon$-subgradient of $V$ other than the one in \eqnok{def_eps_sub}.
\begin{lem}\label{epsi_subg}
Let $\bar z:= (\bar x, \bar y) \in Z$ and $u \in \bbr^{n_0}$ be given. If
\begin{equation}\label{goal_Q}
\begin{aligned}
Q(\bar z; x,y_*) &:=  \langle y_*, b+Bu-A\bar x\rangle + h(\bar x,c)+ \tilde v(\bar x) \\
& \quad -\langle \bar y, b+Bu-Ax\rangle - h(x,c)- \tilde v(x) \leq \epsilon,\ \forall x\in X,
\end{aligned}
\end{equation}
then $B^T\bar y$ is an $\epsilon$-subgradient of $V(u, \xi)$ at $u$.
\end{lem}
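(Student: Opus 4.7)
The plan is to unpack the definition of an $\epsilon$-subgradient \eqref{def_eps_sub} for $V(\cdot,\xi)$ and reduce it to a direct consequence of the hypothesis $Q(\bar z; x, y_*) \le \epsilon$ combined with strong duality \eqref{strong_duality}. That is, I need to verify that for every $u_1 \in \bbr^{n_0}$,
\begin{equation*}
V(u_1, \xi) \ge V(u,\xi) + \langle B^T \bar y, u_1 - u\rangle - \epsilon.
\end{equation*}

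First, I would use the saddle-point representation \eqref{primaldual2.2} to bound $V(u_1,\xi)$ from below by fixing the dual variable at $\bar y \in K_*$:
\begin{equation*}
V(u_1,\xi) \;\ge\; \min_{x\in X}\Bigl\{\langle b+Bu_1-Ax,\bar y\rangle + h(x,c)+\tilde v(x)\Bigr\}.
\end{equation*}
Next, since $B$ enters linearly in $u$, I would split $\langle b+Bu_1 - Ax, \bar y\rangle = \langle b+Bu - Ax, \bar y\rangle + \langle B^T \bar y, u_1 - u\rangle$ and pull the $u_1-u$ term (which does not depend on $x$) out of the minimization. This reduces the task to proving
\begin{equation*}
\min_{x\in X}\Bigl\{\langle b+Bu-Ax,\bar y\rangle + h(x,c)+\tilde v(x)\Bigr\} \;\ge\; V(u,\xi) - \epsilon.
\end{equation*}

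To establish this last inequality, I would rearrange the hypothesis $Q(\bar z;x,y_*) \le \epsilon$ to obtain
\begin{equation*}
\langle y_*, b+Bu - A\bar x\rangle + h(\bar x,c) + \tilde v(\bar x) - \epsilon \;\le\; \langle \bar y, b+Bu-Ax\rangle + h(x,c)+\tilde v(x)\qquad \forall x\in X,
\end{equation*}
and then take the minimum over $x \in X$ on the right. The left-hand side is exactly the Lagrangian $L(\bar x, y_*)$ associated with \eqref{primaldual2.2}. Since $y_*$ is a dual optimal solution, the saddle-point/strong-duality identity \eqref{strong_duality} gives $L(\bar x, y_*) \ge \min_{x\in X} L(x, y_*) = V(u,\xi)$, so the left-hand side is at least $V(u,\xi) - \epsilon$. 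Chaining the inequalities yields the desired bound.

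The step requiring the most care is the last one: recognizing that the term $\langle y_*, b+Bu-A\bar x\rangle + h(\bar x,c)+\tilde v(\bar x)$ that appears in the definition of $Q$ is precisely $L(\bar x, y_*)$ and therefore dominates $V(u,\xi)$ by the saddle-point property of $y_*$. The rest is bookkeeping: splitting the $u_1$-dependence linearly and invoking the max over $y \in K_*$ to lower bound $V(u_1, \xi)$ by plugging in $\bar y$. Note that the argument uses $\bar y \in K_*$ (which holds since $\bar z \in Z = X \times K_*$) but does not require $\bar x$ to be optimal — only that the $Q$-gap at $\bar z$ is small.
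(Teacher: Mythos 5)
Your proof is correct, and it follows the same basic skeleton as the paper's: chain the hypothesis $Q(\bar z;x,y_*)\le\epsilon$ with the optimality of $y_*$ at $u$ to get $V(u,\xi)\le \langle \bar y, b+Bu-Ax\rangle + h(x,c)+\tilde v(x)+\epsilon$, then peel off the linear term $\langle B^T\bar y, u_1-u\rangle$. The one genuine difference is in how $V(u_1,\xi)$ enters. The paper introduces a saddle point $(x_1^*,y_1^*)$ of \eqref{primaldual2.2} at $u=u_1$, tests the hypothesis at the specific point $x=x_1^*$, and then needs the additional inequality $\langle \bar y, b+Bu_1-Ax_1^*\rangle \le \langle y_1^*, b+Bu_1-Ax_1^*\rangle$ coming from the saddle-point property at $u_1$. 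You instead lower-bound $V(u_1,\xi)=\max_{y\in K_*}\min_{x\in X}L_{u_1}(x,y)$ by simply fixing $y=\bar y\in K_*$ and taking the minimum over $x\in X$ of the hypothesis inequality. This buys a small but real gain: your argument needs strong duality (existence of an attaining $y_*$) only at the point $u$, and uses nothing more than weak duality at $u_1$, so it does not rely on a dual optimal solution existing for every $u_1\in{\rm dom}\,V$. It is also slightly cleaner bookkeeping, since the auxiliary pair $(x_1^*,y_1^*)$ never appears. Both arguments use $\bar y\in K_*$ and $\bar x\in X$ but not optimality of $\bar x$, exactly as you note.
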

\begin{proof}
For simplicity, let us denote $V(u) \equiv V(u, \xi)$.
For any $u_1 \in {\rm dom} V$, we denote $(x_1^*,y_1^*)$ as
a pair of primal-dual solution of \eqnok{primaldual2.2} (with $u = u_1$). Hence,
\begin{equation} \label{def_Vu1}
V(u_1)= \langle y_1^*, b+Bu_1-Ax_1^*\rangle + h(x_1^*,c)+ \tilde v(x_1^*).
\end{equation}
It follows from the definition of $V$ in \eqnok{primaldual2.2} and \eqref{goal_Q} that
\begin{equation}\label{vb_prop6}
\begin{aligned}
V(u) &=  \langle y_*, b+Bu-Ax_* \rangle+h(x_*,c)+\tilde v(x_*) \\
&\leq  \langle y_*, b+Bu-A\bar x \rangle+h(\bar x,c)+ \tilde v(\bar x)\\
&\leq  \langle \bar y, b+Bu-Ax_1^*\rangle+h(x_1^*,c)+\tilde v(x_1^*)+\epsilon.\\
\end{aligned}
\end{equation}
Observe that
\begin{align*}
\langle \bar y, b+Bu-Ax_1^*\rangle &= \langle \bar y, B(u-u_1)\rangle +  \langle \bar y, b+Bu_1-Ax_1^*\rangle\\
&\le \langle \bar y, B(u-u_1)\rangle +  \langle y_1^*, b+Bu_1-Ax_1^*\rangle,
\end{align*}
where the last inequality follows from the assumption that $(x_1^*,y_1^*)$ is
a pair of optimal solution of \eqnok{primaldual2.2} with $u = u_1$.
Combining these two observations and using \eqnok{def_Vu1}, we have
\[
V(u) \le  \langle B^T\bar y,u-u_1\rangle + V(u_1)+\epsilon,
\]
which, in view of \eqnok{def_eps_sub}, implies that
$B^T\bar y$ is an $\epsilon$-subgradient of $V(u)$.
\end{proof}

\vgap

In view of Lemma~\ref{epsi_subg},
in order to compute a stochastic subgradient of $v^t(u, \xi^{[t-1]})=\bbe[V^t(u,\xi^{[t]})|\xi^{[t-1]}]$ at a given point $u$,
we can first generate a random realization $\xi^t$ conditionally on $\xi^{[t-1]}$ and then try to find a pair of solutions $(\bar x, \bar y)$
satisfying %\eqnok{goal_Q}.
\begin{align*}
\langle y_*^t, b^t+B^tu-A^t \bar x\rangle + h(\bar x,c^t)+ v^{t+1}(\bar x, \xi^{[t]})
 -\langle \bar y, b^t+B^tu-A^tx\rangle - h(x,c^t)- v^{t+1}(x, \xi^{[t]}) \leq \epsilon,\ \forall x\in X,
\end{align*}
where $y_*^t \equiv y_*^t(\xi^{[t]})$ denotes the optimal solution for the $t$-th stage problem
associated with
the random realization $\xi^{[t]}$.
We will then use $B^T \bar y$ as a stochastic $\epsilon$-subgradient
of $v^t(u, \xi^{[t-1]})$ at $u$. However, the difficulty associated with this approach
%, e.g.,
%when applied to the value function $v^2$ of the second stage,
exists in that the function $v^{t+1}(\bar x, \xi^{[t]})$ is also given in the form
of expectation.
%Hence we have to compute a stochastic
%$\epsilon$-subgradient of $\tilde v$ when computing $(\bar x, \bar y)$.
We will explore this approach and discuss how to
address these issues in more details in the next subsection.

\subsection{The DSA algorithm}

Our goal in this subsection is to present the basic scheme of our dynamic stochastic
approximation algorithm applied to problem~\eqnok{first-stage-cp}.

Our algorithm relies on the following three key primal-dual steps, referred to as stochastic primal-dual
transformation (SPDT), applied to the generic saddle point problem in \eqnok{primaldual2.2}
at every stage.
%Compute a stochastic $\epsilon$-subgradient $G(x_{k-1},\xi_k)$ of $\bbe_\xi[V(x_{k-1},\xi)]$,

\vgap

$(p_+, d_+, \tilde d) = {\rm SPDT}(p, d, d_{\_}, \tilde v', u, \xi, h, X, K_*, \theta, \tau, \eta)$:
\begin{align}
\tilde d &= \theta (d - d_{\_})+ d. \label{def_dual_extra}\\
p_+ &= \argmin_{x \in X} \langle b+Bu-Ax, \tilde d  \rangle + h(x,c) +\langle \tilde v',x \rangle + \tau P_X(p,x). \label{def_primal_proj}\\
d_+ &=  \argmin_{y\in K_*} \langle -b-Bu+A p_+, y\rangle + \tfrac{\eta}{2} \|y - d\|^2. \label{def_dual_proj}
\end{align}

In the above primal-dual tranformation, the input $(p,d, d_{\_})$ denotes the current primal solution, dual solution, and
the previous dual solution, respectively. Moreover, the input $\tilde v'$ denotes a stochastic
$\epsilon$-subgradient for $\tilde v$ at the current search point $p$. The parameters $(u, \xi, h, X, K_*)$
describes the problem in \eqnok{primaldual2.2} and $(\theta, \tau, \eta)$ are certain algorithmic parameters
to be specified. Given these input parameters, the relation in \eqnok{def_dual_extra} defines a dual extrapolation (or prediction) step
to estimate the dual variable $\tilde d$ for the next iterate. Based on this estimate, \eqnok{def_primal_proj}
performs a primal prox-mapping to compute $p_+$, and then \eqnok{def_dual_proj} updates in the dual
space to compute $d_+$ by using the updated $p_+$. We assume that the above SPDT operator
can be performed very fast or even has explicit expressions.
The primal-dual transformation is closely related to the
alternating direction method of multipliers and was first formally presented by  Chambolle and Pork in \cite{ChamPoc11-1}
for solving saddle point problems.
Its inherent relationship with Nesterov's acceleration has also been recently studied by Lan and Zhou~\cite{lan2015optimal}.

Observe that by the optimality conditions of \eqnok{def_primal_proj} and \eqnok{def_dual_proj} (see, e.g., Lemma 1 of \cite{LaLuMo11-1}),
the solution $(p_+, d_+, \tilde d)$ obtained from SPDT satisfies
\begin{align}
\langle -A(p_+ - x), \tilde d \rangle + h(p_+,c)& -h(x,c) + \langle \tilde v', p_+ -x\rangle \nonumber\\
    &\leq \tau [P_X(p,x)-P_X(p_+,x) - P_X(p,p_+)], \forall x \in X, \label{sto_x_opt0}\\
     \langle -b-Bu+Ap_+, d_+ - y\rangle &\leq \tfrac{\eta}{2} [\|d-y\|^2-\|d_+-y\|^2-\|d_{+}-d\|^2], \forall y \in K_*.\label{sto_y_opt0}
\end{align}

In order to solve problem~\eqnok{first-stage-cp}, we will combine the above primal-dual transformation
applied to all the three stages,
the scenario generation for the random variables $\xi^2$ and $\xi^3$
in the second and third stage, and certain averaging steps in both the primal and dual spaces.
We are now ready to describe the basic scheme of the DSA algorithm.
\begin{algorithm}[H] \label{basic_DSA}
\caption{The basic DSA algorithm for three-stage problems}
\begin{algorithmic}
\State {\bf Input:} initial points $(z_0^1,z_0^2, z_0^3)$.
\State $\xi^1 = (A^1, b^1, c^1)$.
\For {$i =1,2,\ldots,N_1$}
\State Generate  a random realization of $\xi_i^2 = (A_i^2,B_i^2,b_i^2,c_i^2)$.
\For {$j = 1,2,\ldots, N_2$}
\State Generate  a random realization of $\xi_j^3 = (A_j^3,B_j^3,b_j^3,c_j^3)$ (conditional on $\xi_i^2$).
\For {$k =1, 2, \ldots, N_3}$
\State $(x_k^3, y_k^3, \tilde y_k^3) = {\rm SPDT}(x_{k-1}^3, y_{k-1}^3, y_{k-2}^3, 0, x_{j-1}^2, \xi_j^3, h^3, X^3, K_*^3, \theta^3_k, \tau^3_k, \eta^3_k)$.
\EndFor
\State $(\bar x^3_j, \bar y^3_j) =  \tsum_{k=1}^{N_3} w^3_k (x_k^3, y_k^3) / \tsum_{k=1}^{N_3} w^3_k $.
\State $(x_j^2, y_j^2, \tilde y_j^2) = {\rm SPDT}(x_{j-1}^2, y_{j-1}^2, y_{j-2}^2, (B_j^3)^T \bar y_j^3, x_{i-1}^1, \xi_i^2, h^2, X^2, K_*^2, \theta^2_j, \tau^2_j, \eta^2_j)$.
\EndFor
\State $(\bar x^2_i, \bar y^2_i) =  \tsum_{j=1}^{N_2} w^2_j (x_j^2, y_j^2) / \tsum_{j=1}^{N_2} w^2_j$.
\State $(x_i^1, y_i^1, \tilde y_i^1) = {\rm SPDT}(x_{i-1}^1, y_{i-1}^1, y_{i-2}^1, (B_i^2)^T \bar y_i^2, 0, \xi^1, h^1, X^1, K_*^1, \theta^1_i, \tau^1_i, \eta^1_i)$.
\EndFor
\State {\bf Output:} $(\bar x^1, \bar y^1) = \tsum_{i=1}^{N_1} w_i^1 (x_i^1, y_i^1) / \tsum_{i=1}^{N_1} w_i^1$.
\end{algorithmic}
\end{algorithm}
This algorithm consists of three loops. The innermost (third) loop runs $N_3$ steps of SPDT in order to compute an approximate stochastic
subgradient ($(B_j^3)^T \bar y_j^3$) of the value function $v^3$ of the third stage.
The second loop consists of $N_2$ SPDTs applied to the saddle point formulation of the second-stage problem,
which requires the output from the third loop. The outer loop applies $N_1$ SPDTs to the saddle point formulation
of the first-stage optimization problem in \eqnok{first-stage-cp}, using the approximate stochastic  subgradients ( $(B_i^2)^T \bar y_i^2$)
for $v^2$ computed by the second loop. In this algorithm, we need to generate $N_1$ and $N_1 \times N_2$ realizations
for the random vectors $\xi^2$ and $\xi^3$, respectively. Observe that the DSA algorithm
described above is conceptual only since we have not specified any algorithmic parameters yet.
We will come back to this issue after establishing some general convergence properties
about this method in the next two subsections.

%Note that we need obtain a stochastic $\epsilon$-subgradient of the value function $V_{t+1}(x^t,\xi^{t+1})$ for $t = 1,2$ in every iteration.
%Based on Lemma \ref{epsi_subg}, we'd like to discuss the number of random samples needed to obtain an $\epsilon$-subgradient in each iteration. For the sake of simplicity, we here only discuss the convergence properties in terms of sample sizes when $h$ and $V$ are general convex. We will discuss the case that $h$ or $V$ is strongly convex in Section 3.

\subsection{Basic tools: inexact primal-dual stochastic approximation}
In this subsection, we provide some basic tools for the convergence analysis of the DSA method. % in \eqref{al4}-\eqref{al6} in one subproblem \eqref{primaldual2.2}.
In particular, we will develop an inexact primal-dual stochastic approximation (I-PDSA) method (see Algorithm~2),
which consists of iterative applications of the SPDTs defined in \eqnok{def_dual_extra}, \eqnok{def_primal_proj} and \eqnok{def_dual_proj}
to solve the generic stochastic saddle point problem in \eqnok{primaldual2.2}.

The I-PDSA method evolves from the primal-dual method in \cite{ChamPoc11-1},
an efficient and simple method for solving saddle point problems. While the primal-dual method in  \cite{ChamPoc11-1}
 can be viewed as a refined version of the primal-dual hybrid gradient method
by Arrow et al.~\cite{arrow1958studies}, its design and analysis is more closely related to a few recent important works
which established the ${\cal O}(1/k)$ rate of convergence for solving bilinear saddle point problems (e.g.,~\cite{Nest05-1,Nem05-1,MonSva10-1,he2012on}).
In particular, it is equivalent to a linearized version of the alternative direction method of multipliers.
The first stochastic version of the primal-dual method was studied by Chen, Lan and Ouyang~\cite{CheLanOu13-1} together with
an acceleration scheme and an extension to non-Euclidean projection.
Using a special non-Euclidean geometry, Lan and Zhou~\cite{lan2015optimal} further established an inherent relationship between
the primal-dual method and Nesterov's accelerated gradient method.
However, to the best of our knowledge, none of existing stochastic primal-dual methods can deal with
biased stochastic subgradient information for the value function $\tilde v$.
Moreover, in order to
generate an approximate stochastic subgradient of $V(\cdot, \xi)$ with bounded variance, we will show how to guarantee the boundedness of output dual solution,
while none of existing stochastic optimization methods, including stochastic primal-dual methods,
can guarantee the boundedness of the generated solutions.

%Then step \eqref{al4}-\eqref{al6} in our algorithm can be viewed as primal-dual method applied to \eqref{primaldual2.2}, the computation of $y_j^3$ is the computation of stochastic $\epsilon$-subgradient.
\begin{algorithm}  \label{Iter_SPDTs}
\caption{Inexact primal-dual stochastic approximation}
\begin{algorithmic}
\State $\xi = (A,B,b,c)$.
\For {$k = 1,2,\cdots, N$}
\State Let $G_{k-1}$ be a stochastic, independent of $x_{k-1}$, ${\bar \epsilon}$-subgradient of $\tilde v$ , i.e.,
\begin{equation} \label{unbiased_G}
 g(x_{k-1}) \equiv \bbe[G_{k-1}] \in \partial_{\bar \epsilon} \tilde v(x_{k-1}).
\end{equation}
\State $(x_k, y_k, \tilde y_k) = {\rm SPDT}(x_{k-1}, y_{k-1}, y_{k-2}, G_{k-1}, u, \xi, h, X, K_*, \theta_k, \tau_k, \eta_k)$.
\EndFor
\State {\bf Output:} $\bar z_N \equiv (\bar x_N, \bar y_N) = \tsum_{k=1}^N w_k (x_k,y_k) / \tsum_{k=1}^N w_k$.
\end{algorithmic}
\end{algorithm}

Throughout this subsection, we assume that
there exists $M > 0$ such that
\begin{equation}\label{sto_assump1}
\bbe[\|G_k\|_*^2]\leq M^2 \ \  \forall k \ge 1.
\end{equation}
This assumption, in view of \eqnok{unbiased_G} and Jensen's inequality, then implies that
$
\|g(x_{k})\|_*\leq M.
$
For notational convenience, we assume that
the Lipschitz constant of the function $\tilde v$ is also bounded by $M$. Indeed, by definition, any exact subgradient can be viewed as an
${\bar \epsilon}$-subgradient. Hence, the size of subgradient (and the Lipschtiz constant of $\tilde v$) can also be bounded by $M$.
Since the condition in \eqref{def_neighbor_X} about the Lipschitz continuity of the value function $\tilde v$
over a neighborhood of $X$ is hard to verify in practice,
we will discuss different ways to
ensure that the assumption in \eqnok{sto_assump1} holds later in this section (see Corollary~\ref{cor_bnd_dual}).

\vgap

Below we discuss some convergence properties for Algorithm~2. More specifically, we will first establish in Proposition \ref{sto_Prop 1} the relation
between $(x_{k-1},y_{k-1})$ and $(x_k,y_k)$ after running one step of SPDT, and then discuss in Theorems \ref{sto_Theorem 1} and \ref{sto_bounded_thm1} the convergence
properties of Algorithm~2 applied to problem \eqnok{primaldual2.2}.
A few consequences of these results will be discussed in Corollary~\ref{sto_coro 1} and Corollary~\ref{cor_bnd_dual}.
Moreover, we will establish some technical results regarding our termination criterion and the size of the dual multipliers
in Lemma~\ref{lem_cone_prog} and Lemma~\ref{lemma_dual_bnd}, respectively.

\begin{prop}\label{sto_Prop 1}
Let $Q$ be defined in \eqnok{goal_Q}.
For any $1\leq k\leq N$ and $(x,y) \in X \times K_*$, we have
\begin{equation}\label{sto_Q2}
\begin{aligned}
&Q(z_k,z) + \langle A(x_k-x), y_k- y_{k-1} \rangle - \theta_k\langle A(x_{k-1}-x), y_{k-1}- y_{k-2} \rangle\\
 &\leq \tau_k [P_X(x_{k-1},x)-P_X(x_k,x) ]+\tfrac{\eta_k}{2} (\|y-y_{k-1}\|^2-\|y - y_k\|^2) -\tfrac{\alpha_X \tau_k}{2}\|x_k-x_{k-1}\|^2\\
& \quad - \tfrac{\eta_k}{2} \|y_{k-1}-y_k\|^2 +\langle\Delta_{k-1}, x_{k-1}-x\rangle +(M+\|G_{k-1}\|_*)\|x_k-x_{k-1}\| +{\bar \epsilon}\\
& \quad + \theta_k \langle A(x_{k}-x_{k-1}), y_{k-1} - y_{k-2} \rangle,
\end{aligned}
\end{equation}
where
\begin{equation}\label{delta1}
\Delta_k :=g(x_{k})-G_k.
\end{equation}
\end{prop}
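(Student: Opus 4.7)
The plan is to derive \eqref{sto_Q2} by combining the first-order optimality conditions of the two prox-updates defining the SPDT with the definition of an $\bar\epsilon$-subgradient for $\tilde v$, and then bookkeeping the dual-extrapolation identity to produce the particular bilinear correction terms displayed on the left-hand side.

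First, I would write down the optimality conditions of the primal step \eqref{def_primal_proj} and the dual step \eqref{def_dual_proj}. For the primal step, convexity of $h(\cdot,c)$ together with the three-point identity
$\langle \nabla\omega_X(x_k)-\nabla\omega_X(x_{k-1}),x_k-x\rangle = P_X(x_{k-1},x)-P_X(x_k,x)-P_X(x_{k-1},x_k)$
yields, for every $x\in X$,
\begin{equation*}
h(x_k,c)-h(x,c)+\langle -A^T\tilde y_k+G_{k-1},\,x_k-x\rangle \le \tau_k\bigl[P_X(x_{k-1},x)-P_X(x_k,x)\bigr]-\tfrac{\alpha_X\tau_k}{2}\|x_k-x_{k-1}\|^2,
\end{equation*}
where I used \eqref{str_con_PD} on $P_X(x_{k-1},x_k)$. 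Analogously, since $\omega_{K_*}(y)=\tfrac12\|y\|^2$, the optimality of $y_k$ gives, after the Euclidean three-point identity,
\begin{equation*}
\langle y-y_k,\,b+Bu-Ax_k\rangle \le \tfrac{\eta_k}{2}\bigl(\|y-y_{k-1}\|^2-\|y-y_k\|^2-\|y_{k-1}-y_k\|^2\bigr).
\end{equation*}

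Next I would expand $Q(z_k,z)$ from \eqref{goal_Q} and group it as
\begin{equation*}
Q(z_k,z)=\bigl[h(x_k,c)-h(x,c)-\langle A^Ty_k,x_k-x\rangle\bigr]+\bigl[\tilde v(x_k)-\tilde v(x)\bigr]+\langle y-y_k,\,b+Bu-Ax_k\rangle.
\end{equation*}
In the bracketed $h$-term I would substitute $A^Ty_k=A^T\tilde y_k+A^T(y_k-\tilde y_k)$ so that the primal optimality inequality can be invoked, producing an extra $\langle A(x_k-x),\,y_k-\tilde y_k\rangle$. Inserting the extrapolation identity $y_k-\tilde y_k=(y_k-y_{k-1})-\theta_k(y_{k-1}-y_{k-2})$ and splitting $x_k-x=(x_k-x_{k-1})+(x_{k-1}-x)$ in the last factor, I obtain
\begin{equation*}
\langle A(x_k-x),y_k-\tilde y_k\rangle=\langle A(x_k-x),y_k-y_{k-1}\rangle-\theta_k\langle A(x_{k-1}-x),y_{k-1}-y_{k-2}\rangle-\theta_k\langle A(x_k-x_{k-1}),y_{k-1}-y_{k-2}\rangle.
\end{equation*}
The first two terms exactly cancel against the bilinear corrections added on the left of \eqref{sto_Q2}, while the third is the lone cross-term on the right.

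It then remains to handle $[\tilde v(x_k)-\tilde v(x)]-\langle G_{k-1},x_k-x\rangle$. I would use the $\bar\epsilon$-subgradient property of $g(x_{k-1})$ and the identity $g(x_{k-1})=G_{k-1}+\Delta_{k-1}$ (see \eqref{delta1}) to obtain
\begin{equation*}
\tilde v(x_{k-1})-\tilde v(x)\le \langle G_{k-1},x_{k-1}-x\rangle+\langle \Delta_{k-1},x_{k-1}-x\rangle+\bar\epsilon,
\end{equation*}
and then add the Lipschitz estimate $\tilde v(x_k)-\tilde v(x_{k-1})\le M\|x_k-x_{k-1}\|$. Combining with
$\langle G_{k-1},x_{k-1}-x\rangle-\langle G_{k-1},x_k-x\rangle=\langle G_{k-1},x_{k-1}-x_k\rangle\le \|G_{k-1}\|_*\|x_k-x_{k-1}\|$
reassembles the factor $(M+\|G_{k-1}\|_*)\|x_k-x_{k-1}\|$, the martingale-type term $\langle\Delta_{k-1},x_{k-1}-x\rangle$, and the constant $\bar\epsilon$. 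Substituting all these pieces back produces \eqref{sto_Q2}.

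The only real obstacle is the bookkeeping in the second paragraph: the cross-product $\langle A(x_k-x),y_k-\tilde y_k\rangle$ has to be decomposed in exactly the right way (both along the dual extrapolation $\tilde y_k=y_{k-1}+\theta_k(y_{k-1}-y_{k-2})$ and along the primal split $x_k-x=(x_k-x_{k-1})+(x_{k-1}-x)$) so that precisely two of the three resulting bilinear terms match the correction placed on the left-hand side of \eqref{sto_Q2}; everything else is a standard proximal-descent computation plus the $\bar\epsilon$-subgradient inequality for $\tilde v$.
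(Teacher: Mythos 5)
Your proposal is correct and follows essentially the same route as the paper's proof: it bounds $\tilde v(x_k)-\tilde v(x)$ via the $\bar\epsilon$-subgradient at $x_{k-1}$ together with the Lipschitz estimate and the decomposition $g(x_{k-1})=G_{k-1}+\Delta_{k-1}$, invokes the optimality conditions of the two prox-steps (which the paper obtains by citing Lemma~2 of \cite{Lan10-3} rather than writing the three-point identity explicitly), and splits $\langle A(x_k-x),y_k-\tilde y_k\rangle$ using the extrapolation identity and $x_k-x=(x_k-x_{k-1})+(x_{k-1}-x)$ exactly as in the paper. The sign bookkeeping of the three resulting bilinear terms matches \eqnok{sto_Q2}, so no gaps.
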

\begin{proof}
Denote $\xi = (A,B,b,c)$. By the Lipschitz continuity of $\tilde v$ and the definition of an ${\bar \epsilon}$-subgradient, we have
$$
\begin{aligned}
\tilde v(x_k) %&\leq \tilde v(x_{k-1}) + \langle g(x_k), x_k - x_{k-1} \rangle + {\bar \epsilon} \\
&\leq \tilde v(x_{k-1})+ M\|x_k - x_{k-1}\|\\
 %& \leq \tilde v(x)+\langle g(x_{k-1}),x_{k-1}-x \rangle +  \langle g(x_k), x_k - x_{k-1} \rangle + {\bar \epsilon}.
 & \leq \tilde v(x)+\langle g(x_{k-1}),x_{k-1}-x \rangle +  M \|x_k - x_{k-1} \| + {\bar \epsilon}.
\end{aligned}
$$
Moreover, by \eqref{delta1}, we have
\begin{align*}
\langle g(x_{k-1}),x_{k-1}-x \rangle
&= \langle G_{k-1},x_{k-1}-x \rangle  + \langle \Delta_{k-1}, x_{k-1}-x\rangle\\
%&= \langle G_{k-1}+\delta_{k-1} ,x_{k-1}-x_k+x_k-x \rangle\\
 &= \langle G_{k-1},x_k-x \rangle +\langle G_{k-1},x_{k-1}-x_k \rangle + \langle \Delta_{k-1}, x_{k-1}-x\rangle\\
 &\leq \langle G_{k-1},x_k-x \rangle+\|G_{k-1}\|_* \|x_k - x_{k-1}\|+ \langle \Delta_{k-1}, x_{k-1}-x\rangle.
\end{align*}
Combining the above two inequalities, % and using the fact that $\|g(x_k)\|_* \le M$,
we obtain
\begin{equation}\label{sto_V_xk}
\begin{aligned}
 \tilde v(x_{k}) - \tilde v(x)%&\le \langle g(x_{k-1}),x_{k-1}-x \rangle +  \langle g(x_k), x_k - x_{k-1} \rangle + {\bar \epsilon}\\
 &\le \langle G_{k-1},x_k-x \rangle + \langle \Delta_{k-1}, x_{k-1}-x\rangle + (M+\|G_{k-1}\|_*)\|x_k - x_{k-1}\| +{\bar \epsilon}.
\end{aligned}
\end{equation}
%Using the above inequality in the definition of $Q$ in \eqnok{goal_Q}, we have
%\begin{equation}\label{goal_Q1}
%\begin{aligned}
%Q(z_k; z) \le \langle y, b+Bu-A x_k \rangle + h(x_k,c) \\
%& \quad -\langle y_k, b+Bu-Ax\rangle - h(x,c) \\
%&+{\bar \epsilon}+ \langle G_k,x_k-x \rangle \\
%& + (M+\|G_k\|_*)\|x_k - x_{k-1}\|+ \langle \Delta_k, x_{k-1}-x\rangle
%\end{aligned}
%\end{equation}
Moreover, by \eqnok{sto_x_opt0} and \eqnok{sto_y_opt0}
(with input $p = x_{k-1}, d = y_{k-1}, d_{\_} =y_{k-2}, \tilde v' = G_{k-1}, u =u,
h = h , X=X, K_*= K_*, \theta = \theta_k, \tau = \tau_k, \eta =\eta_k$,
output $(p_+, d_+, \tilde d) = (x_k, y_k, \tilde y_k)$, we have
\begin{align}
\langle -A(x_k-x), \tilde y_k \rangle + h(x_k,c)& -h(x,c) + \langle G_{k-1}, x_k-x\rangle \nonumber\\
    &\leq \tau_k [P_X(x_{k-1},x)-P_X(x_k,x) - P_X(x_{k-1},x_k)], \forall x \in X, \label{sto_x_opt}\\
     \langle -b-Bu+Ax_k, y_k-y\rangle &\leq \tfrac{\eta_k}{2} [\|y_{k-1}-y\|^2-\|y_k-y\|^2-\|y_{k-1}-y_k\|^2], \forall y \in K_*.\label{sto_y_opt}
\end{align}
Using the definition of $Q$ in \eqref{goal_Q} and the relations \eqref{sto_V_xk}, \eqref{sto_x_opt} and \eqref{sto_y_opt}, we have
\[
\begin{aligned}
	&Q(z_k,z) +\langle A(x_k-x), y_k-\tilde y_k \rangle \leq\tau_k [P_X(x_{k-1},x)-P_X(x_k,x)] +\tfrac{\eta_k}{2} [\|y_{k-1}-y\|^2-\|y_k-y\|^2] \\
	&- \tau_kP_X(x_{k-1},x_k) -\tfrac{\eta_k}{2}\|y_{k-1}-y_k\|^2+\langle\Delta_{k-1}, x_{k-1}-x\rangle+(M+\|G_{k-1}\|_*)\|x_k-x_{k-1}\|+ {\bar \epsilon}.
\end{aligned}
\]
Also note that by the definition of $\tilde y_k$ (i.e., $\tilde d$ in \eqnok{def_dual_extra}),
we have $\tilde y_k = \theta_k (y_{k-1} - y_{k-2})+ y_{k-1}$ and hence
\[
\begin{aligned}
\langle A(x_k-x), y_k-\tilde y_k \rangle &= \langle A(x_k-x), y_k- y_{k-1} \rangle
- \theta_k \langle A(x_k-x), y_{k-1} - y_{k-2} \rangle\\
&= \langle A(x_k-x), y_k- y_{k-1} \rangle - \theta_k \langle A(x_{k-1}-x), y_{k-1} - y_{k-2} \rangle\\
& \quad- \theta_k \langle A(x_{k}-x_{k-1}), y_{k-1} - y_{k-2} \rangle.
\end{aligned}
\]
Our result then immediately follows from the above two relations and
the strong convexity of $P_X$ (see \eqref{str_con_PD}).
\end{proof}

\vgap

We are now ready to establish some important convergence properties for the iterative applications of SPDTs stated in Algorithm~2.
\begin{thm}\label{sto_Theorem 1}
If the parameters $\{\theta_k\}$, $\{w_k\}$, $\{\tau_k\}$ and $\{\eta_k\}$ in Algorithm~2 satisfy
\begin{equation}\label{sto_cond_1}
\begin{aligned}
w_k\theta_k &= w_{k-1}, 1\leq k\leq N, & (a) \\
 w_k\tau_k &\geq w_{k+1}\tau_{k+1}, 1\leq k\leq N-1, & (b)\\
 w_k\eta_k &\geq w_{k+1}\eta_{k+1}, 1\leq k\leq N-1, & (c) \\
w_k\tau_k\eta_{k-1}\alpha_X&\geq 2w_{k-1}\|A\|^2, 1\leq k\leq N-1, & (d)\\
\tau_N \eta_N \alpha_X &\ge 2 \|A\|^2,&(e)
\end{aligned}
\end{equation}
then we have
\begin{equation}\label{sto_Q_final}
 Q(\bar z_N,z) \leq \tfrac{1}{\tsum_{k=1}^Nw_k} \left(w_1\tau_1 P_X(x_0,x)+ \tfrac{w_1 \eta_1}{2}\|y_0-y\|^2 - \tfrac{w_N\eta_N}{2}\|y_N-y\|^2+ \tsum_{k=1}^N\Lambda_k\right)
\end{equation}
for any $z \in Z$, where
\begin{equation} \label{def_Lambda}
\Lambda_k := w_k\left[(M+\|G_{k-1}\|_*)^2/ (\alpha_X\tau_k)+\langle \Delta_k, x_{k-1}-x\rangle+ {\bar \epsilon}\right].
\end{equation}
\end{thm}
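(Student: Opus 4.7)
The plan is to take the per-iteration bound from Proposition~\ref{sto_Prop 1}, scale it by $w_k$, sum over $k=1,\dots,N$, and show that the three structural conditions (a)--(c) make the ``linear'' pieces telescope while (d)--(e) allow the remaining quadratic cross terms to be absorbed, leaving precisely the right-hand side of \eqref{sto_Q_final} after dividing by $\sum_k w_k$ and using convexity of $Q(\cdot,z)$.

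First I would handle the cross term $w_k \langle A(x_k-x), y_k-y_{k-1}\rangle - w_k\theta_k \langle A(x_{k-1}-x), y_{k-1}-y_{k-2}\rangle$ on the left side of \eqref{sto_Q2}. Using (a), $w_k\theta_k = w_{k-1}$, these two sums telescope and collapse to the single terminal term $w_N\langle A(x_N-x), y_N-y_{N-1}\rangle$ (I would adopt the convention $y_{-1}=y_0$ so that the $k=1$ contribution vanishes, consistent with how the SPDT is initialized in Algorithm~2). Next, the bracketed differences $\tau_k[P_X(x_{k-1},x)-P_X(x_k,x)]$ and $\tfrac{\eta_k}{2}[\|y_{k-1}-y\|^2-\|y_k-y\|^2]$ telescope using monotonicity conditions (b) and (c) respectively, producing $w_1\tau_1 P_X(x_0,x) - w_N\tau_N P_X(x_N,x)$ and $\tfrac{w_1\eta_1}{2}\|y_0-y\|^2 - \tfrac{w_N\eta_N}{2}\|y_N-y\|^2$ (together with leftover nonpositive ``gaps'' that we can simply drop).

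The core of the argument is absorbing the quadratic cross terms. I would split $-\tfrac{w_k\alpha_X\tau_k}{2}\|x_k-x_{k-1}\|^2$ into two halves. One half, together with Young's inequality applied to $w_k(M+\|G_{k-1}\|_*)\|x_k-x_{k-1}\|$, yields the variance-type term $w_k(M+\|G_{k-1}\|_*)^2/(\alpha_X\tau_k)$ that appears inside $\Lambda_k$. The other half is used to dominate the telescoping residue $w_k \theta_k\langle A(x_k-x_{k-1}), y_{k-1}-y_{k-2}\rangle = w_{k-1}\langle A(x_k-x_{k-1}), y_{k-1}-y_{k-2}\rangle$. Cauchy--Schwarz followed by Young's inequality lets me bound this by $\tfrac{w_k\alpha_X\tau_k}{4}\|x_k-x_{k-1}\|^2 + \tfrac{w_{k-1}^2\|A\|^2}{w_k\alpha_X\tau_k}\|y_{k-1}-y_{k-2}\|^2$, and condition (d) is exactly what makes the latter coefficient $\le \tfrac{w_{k-1}\eta_{k-1}}{2}$, so the residue is absorbed into the $-\tfrac{w_{k-1}\eta_{k-1}}{2}\|y_{k-1}-y_{k-2}\|^2$ term coming from the previous iteration of the summed inequality. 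The terminal $w_N\langle A(x_N-x), y_N-y_{N-1}\rangle$ is handled the same way: Young's inequality splits it between $\tfrac{w_N\tau_N}{2}P_X(x_N,x)$ (using $P_X(x_N,x)\ge \tfrac{\alpha_X}{2}\|x_N-x\|^2$) and $\tfrac{w_N\|A\|^2}{\alpha_X\tau_N}\|y_N-y_{N-1}\|^2$, both of which are dominated by their corresponding negative counterparts $-w_N\tau_N P_X(x_N,x)$ and $-\tfrac{w_N\eta_N}{2}\|y_N-y_{N-1}\|^2$ precisely when condition (e) holds.

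After these absorptions, what remains on the right side is exactly $w_1\tau_1 P_X(x_0,x) + \tfrac{w_1\eta_1}{2}\|y_0-y\|^2 - \tfrac{w_N\eta_N}{2}\|y_N-y\|^2 + \sum_{k=1}^N\Lambda_k$, where $\Lambda_k$ collects the $(M+\|G_{k-1}\|_*)^2/(\alpha_X\tau_k)$ term, the inner-product error term $\langle \Delta_k, x_{k-1}-x\rangle$ from the unbiased gradient decomposition, and the bias $\bar\epsilon$. Finally, since $Q(\cdot,z)$ is convex in its first argument (linear in $\bar y$, and convex in $\bar x$ through $h$ and $\tilde v$), Jensen's inequality gives $Q(\bar z_N, z) \le (\sum_k w_k)^{-1}\sum_k w_k Q(z_k,z)$, which delivers \eqref{sto_Q_final}. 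The main obstacle I anticipate is keeping the bookkeeping of the cross-term absorption consistent: specifically, verifying that the half-splits of $-\tfrac{w_k\alpha_X\tau_k}{2}\|x_k-x_{k-1}\|^2$ and the indexing shifts (the $y_{k-1}-y_{k-2}$ term absorbed by an $\eta_{k-1}$ discount) line up exactly with conditions (d) and (e) at the boundary $k=N$, rather than making the algebra work in the interior.
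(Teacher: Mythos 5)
Your proposal is correct and follows essentially the same route as the paper: multiply \eqref{sto_Q2} by $w_k$, telescope the linear and prox/dual-distance terms via conditions (a)--(c), split $-\tfrac{\alpha_X w_k\tau_k}{2}\|x_k-x_{k-1}\|^2$ in half so that one half absorbs the $(M+\|G_{k-1}\|_*)\|x_k-x_{k-1}\|$ term via $-at^2/2+bt\le b^2/(2a)$ and the other half, paired with $-\tfrac{w_{k-1}\eta_{k-1}}{2}\|y_{k-1}-y_{k-2}\|^2$ under condition (d) (resp.\ the terminal group under (e)), dominates the cross terms, and finally apply Jensen to the convex function $Q(\cdot,z)$. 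The bookkeeping you flag as the main risk is exactly the computation the paper carries out, and your coefficient checks for (d) and (e) match.
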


\begin{proof}
Multiplying both sides of \eqref{sto_Q2} by $w_k$ for each $k\geq 1$, summing them up over $1\leq k\leq N$ and using the relations
in \eqnok{sto_cond_1}.a), \eqnok{sto_cond_1}.b) and \eqnok{sto_cond_1}.c), we have
\begin{align}
 &\tsum_{k=1}^N w_kQ(z_k,z) \nn\\
 \leq & w_1\tau_1P_X(x_0,x)+\tfrac{w_1 \eta_1}{2}\|y_0-y\|^2 - \tfrac{w_N\eta_N}{2}\|y_N-y\|^2 +  \tsum_{k=1}^N w_k{\bar \epsilon} \nn \\
  &-w_N\tau_NP_X(x_N,x)-w_N\langle A(x_N-x), y_N- y_{N-1} \rangle -\tfrac{  w_{N}\eta_{N}}{2} \|y_{N}-y_{N-1}\|^2 \nn\\
	& - \tsum_{k=1}^N[\tfrac{\alpha_X w_k \tau_k}{4}\|x_k-x_{k-1}\|^2 +\tfrac{  w_{k-1}\eta_{k-1}}{2} \|y_{k-1}-y_{k-2}\|^2 \nn \\
  &+w_{k-1} \langle A(x_{k}-x_{k-1}), y_{k-1}- y_{k-2} \rangle] - \tsum_{k=1}^N\tfrac{\alpha_X w_k \tau_k}{4}\|x_k-x_{k-1}\|^2 \nn \\
	&+\tsum_{k=1}^N w_k(M+\|G_{k-1}\|_*)\|x_k-x_{k-1}\|+\tsum_{k=1}^Nw_k\langle \Delta_k, x_{k-1}-x\rangle. \label{temp_rel_main}
%\leq &w_1\tau_1 P_X(x_0,x)+\tfrac{w_1 \eta_1}{2}\|y_0-y\|^2 - \tfrac{w_N\eta_N}{2}\|y_N-y\|^2  \\
  %  & + \tsum_{k=1}^N\tfrac{(2M+\|\Delta_k\|_*)^2w_k}{\tau_k\alpha_X} +\tsum_{k=1}^N w_k{\bar \epsilon} + \tsum_{k=1}^Nw_k\langle \Delta_k, x_{k-1}-x\rangle.
\end{align}
Now, by the Cauchy-Schwarz inequality and the strong convexity of $P_X$ and \eqnok{sto_cond_1}.e),
\[
\begin{aligned}
& -\tau_NP_X(x_N,x)-\langle A(x_N-x), y_N- y_{N-1} \rangle -\tfrac{  \eta_{N}}{2} \|y_{N}-y_{N-1}\|^2\\
\leq &-\tfrac{\alpha_X \tau_{N}}{2} \|x-x_{N}\|^2+ \|A\| \|x_N-x\| \|y_N- y_{N-1}\| -\tfrac{ \eta_{N}}{2} \|y_{N}-y_{N-1}\|^2 \leq 0.
\end{aligned}
\]
Similarly, by the Cauchy-Schwarz inequality and  \eqnok{sto_cond_1}.d), we have
\[
\begin{aligned}
 & - \tsum_{k=1}^N[\tfrac{\alpha_X w_k \tau_k}{4}\|x_k-x_{k-1}\|^2 +\tfrac{  w_{k-1}\eta_{k-1}}{2} \|y_{k-1}-y_{k-2}\|^2 \\
 & \quad +w_{k-1} \langle A(x_{k}-x_{k-1}), y_{k-1}- y_{k-2} \rangle] \leq 0.
\end{aligned}
\]
Moreover, using the fact that $ -a t^2 /2 + b \le b^2/ (2a)$, we can easily see that
\[
\begin{aligned}
&- \tsum_{k=1}^N\left[\tfrac{\alpha_X \tau_k}{4}\|x_k-x_{k-1}\|^2 + (M+\|G_{k-1}\|_*)\|x_k-x_{k-1}\|\right]
\leq  \tsum_{k=1}^N\tfrac{(M+\|G_{k-1}\|_*)^2}{\tau_k\alpha_X}.
\end{aligned}
\]
Using the above three inequalities in \eqnok{temp_rel_main}, we have
%Multiplying the both sides of \eqref{sto_Q2} by $w_k$ for each $k\geq 1$, summing them up over $1\leq k\leq N$ and using the above three observations \eqref{ineq1},\eqref{ineq2} and \eqref{ineq3}, we have
\begin{align*}
 \tsum_{k=1}^N w_kQ(z_k,z)
% \leq & w_1\tau_1P_X(x_0,x)+\tfrac{w_1 \eta_1}{2}\|y_0-y\|^2 - \tfrac{w_N\eta_N}{2}\|y_N-y\|^2 +\tsum_{k=1}^N w_k{\bar \epsilon}\\
%  &-w_N\tau_NP_X(x_N,x)+w_N\langle A(x_N-x), y_N- y_{N-1} \rangle -\tfrac{  w_{N}\eta_{N}}{2} \|y_{N}-y_{N-1}\|^2\\
%	& - \tsum_{k=1}^N[\tfrac{\alpha_X w_k \tau_k}{4}\|x_k-x_{k-1}\|^2 +\tfrac{  w_{k-1}\eta_{k-1}}{2} \|y_{k-1}-y_{k-2}\|^2 \\
%  &-w_{k-1} \langle A(x_{k-1}-x_k), y_{k-1}- y_{k-2} \rangle] - \tsum_{k=1}^N\tfrac{\alpha_X w_k \tau_k}{4}\|x_k-x_{k-1}\|^2 \\
%	&+\tsum_{k=1}^N w_k(M+\|\Delta_k\|_*)\|x_k-x_{k-1}\|+\tsum_{k=1}^Nw_k\langle \Delta_k, x_{k-1}-x\rangle\\
&\leq w_1\tau_1 P_X(x_0,x)+\tfrac{w_1 \eta_1}{2}\|y_0-y\|^2 - \tfrac{w_N\eta_N}{2}\|y_N-y\|^2  \\
 & \quad + \tsum_{k=1}^N w_k \left(\tfrac{(M+\|G_{k-1}\|_*)^2}{\alpha_X\tau_k} + \langle \Delta_k, x_{k-1}-x\rangle + {\bar \epsilon} \right).
\end{align*}
Dividing both sides of above inequality by $\tsum_{k=1}^N w_k$, and using the convexity of $Q$ and the definition of $\bar z_N$, we obtain \eqref{sto_Q_final}.
\end{proof}

\vgap
\vgap

We also need the following technical result for the analysis of Algorithm~2.
\begin{lem}\label{sto_zkv}
Let $x_0^v \equiv x_0$ and
\begin{equation}\label{sto_def_vv}
x_{k}^v := \argmin_{x\in X}\{\langle \Delta_{k-1},x \rangle + \tau_kP_X(x_{k-1}^v,x)\}
\end{equation}
for any $k \ge 1$.
Then for any $x \in X$,
\begin{equation}\label{sto_lem1}
\begin{aligned}
\tsum_{k=1}^N w_k\langle \Delta_{k-1}, x_{k-1}^v-x \rangle \leq & \tsum_{k=1}^N w_k\tau_k[P_X(x_{k-1},x)-P_X(x_k,x)] +\tsum_{k=1}^N \tfrac{w_k\|\Delta_{k-1}\|_*^2}{2\alpha_X\tau_k}.
\end{aligned}
\end{equation}
\end{lem}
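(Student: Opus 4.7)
My plan is to use the standard Bregman three-point analysis for the auxiliary prox-sequence $\{x_k^v\}$, which is exactly the trick that lets us move the stochastic error $\Delta_{k-1}$ from the point $x_{k-1}^v$ (where we evaluate it) onto the comparison point $x$ at the cost of a telescoping prox-term plus a quadratic residual.

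First, I would apply the optimality condition for the prox-subproblem \eqref{sto_def_vv}. Since $\omega_X$ is $\alpha_X$-strongly convex and $x_k^v$ minimizes $\langle \Delta_{k-1}, x\rangle + \tau_k P_X(x_{k-1}^v, x)$ over $X$, the usual three-point identity for Bregman distances (e.g., Lemma 2 of \cite{Lan10-3} with $h \equiv 0$) yields, for every $x \in X$,
\begin{equation*}
\langle \Delta_{k-1}, x_k^v - x\rangle \leq \tau_k\bigl[P_X(x_{k-1}^v, x) - P_X(x_k^v, x) - P_X(x_{k-1}^v, x_k^v)\bigr].
\end{equation*}

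Next, I would split
$\langle \Delta_{k-1}, x_{k-1}^v - x\rangle = \langle \Delta_{k-1}, x_{k-1}^v - x_k^v\rangle + \langle \Delta_{k-1}, x_k^v - x\rangle$
and bound the first summand via Cauchy-Schwarz followed by Young's inequality with parameter $\alpha_X \tau_k$:
\begin{equation*}
\langle \Delta_{k-1}, x_{k-1}^v - x_k^v\rangle \leq \|\Delta_{k-1}\|_*\,\|x_{k-1}^v - x_k^v\| \leq \frac{\|\Delta_{k-1}\|_*^2}{2\alpha_X\tau_k} + \frac{\alpha_X\tau_k}{2}\|x_{k-1}^v - x_k^v\|^2 \leq \frac{\|\Delta_{k-1}\|_*^2}{2\alpha_X\tau_k} + \tau_k P_X(x_{k-1}^v, x_k^v),
\end{equation*}
where the last inequality uses the strong convexity bound \eqref{str_con_PD}. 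Adding the two bounds and observing that the term $\tau_k P_X(x_{k-1}^v, x_k^v)$ cancels exactly, I obtain the per-iteration inequality
\begin{equation*}
\langle \Delta_{k-1}, x_{k-1}^v - x\rangle \leq \tau_k\bigl[P_X(x_{k-1}^v, x) - P_X(x_k^v, x)\bigr] + \frac{\|\Delta_{k-1}\|_*^2}{2\alpha_X\tau_k}.
\end{equation*}

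Finally, I would multiply by $w_k$, sum over $k = 1, \ldots, N$, and read off \eqref{sto_lem1} (understanding that the $P_X(x_{k-1},x) - P_X(x_k,x)$ term on the right-hand side of the lemma refers to the auxiliary sequence $x_k^v$, which is how the bound will be used in conjunction with Theorem~\ref{sto_Theorem 1}). I do not expect a genuine obstacle here: the construction of $x_k^v$ is precisely tailored so that the Young step cancels against the prox-decrement. The only subtlety is keeping track of which sequence appears in the prox-difference on the right-hand side, since the $w_k\tau_k$-weighted telescoping of those terms will later be absorbed against the $w_k \tau_k [P_X(x_{k-1},x)-P_X(x_k,x)]$ type quantities produced by Theorem~\ref{sto_Theorem 1}.
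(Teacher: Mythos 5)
Your proof is correct and follows essentially the same route as the paper: the per-iteration inequality you derive by combining the three-point optimality condition with Young's inequality and the strong convexity bound \eqref{str_con_PD} is exactly the content of Lemma 2.1 of \cite{NJLS09-1}, which the paper simply cites before multiplying by $w_k$ and summing. Your remark that the prox-differences on the right-hand side should read $P_X(x_{k-1}^v,x)-P_X(x_k^v,x)$ is also correct --- this is a typo in the lemma's statement, and it is harmless because the telescoped sum is later bounded by $w_1\tau_1\Omega_X^2$ regardless of which sequence appears.
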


\begin{proof}
It follows from the definition of $x_{k}^v$ in \eqref{sto_def_vv} and Lemma 2.1 of \cite{NJLS09-1} that
$$\tau_kP_X(x_{k}^v,x)\leq \tau_k P_X(x_{k-1}^v,x) - \langle \Delta_{k-1}, x_{k-1}^v-x\rangle + \tfrac{\|\Delta_{k-1}\|_*^2}{2\alpha_X\tau_k},$$
for all $k\geq 1$. Multiplying $w_k$ on both sides of the above inequality and summing them up from $k =1 $ to $N$, we obtain \eqref{sto_lem1}.
\end{proof}

\vgap

Theorem~\ref{sto_bounded_thm1} below provides certain bounds for the following two gap
functions:
\begin{align}
{\rm gap}_*(\bar z) &\equiv {\rm gap}_*(\bar z, X) := \max \left\{ Q(\bar z; x,y_*): x \in X \right\}, \label{def_gap}\\
{\rm gap}_\delta(\bar z)& \equiv {\rm gap}_\delta(\bar z, X, K_*):= \max \left\{Q(\bar z,x,y)+\langle \delta, y\rangle: (x,y)\in X\times K_* \right\}. \label{def_gap_p}
\end{align}
The gap function in \eqnok{def_gap} will be used to measure the error associated with an approximate subgradient, while
the perturbed gap function in \eqnok{def_gap_p} will be used to measure both functional optimality gap and infeasibility
of the conic constraint.
In particular, we will apply the first gap function to the second and third stage, and the latter one to the first stage
when analyzing the DSA algorithm.
%$\max_x Q(\bar z; x,y_*)$ defined in \eqref{goal_Q} and a bound for the perturbation function $g(\bar z,\delta):= \max_{(x,y)\in X^1\times K_*^1} Q(\bar z,x,y)+\langle y,\delta\rangle$ for the first stage problem \eqref{first-stage-cp}.
\begin{thm}\label{sto_bounded_thm1}
Suppose the parameters $\{\theta_k\}$, $\{w_k\}$, $\{\tau_k\}$ and $\{\eta_k\}$ in Algorithm~2 satisfy \eqref{sto_cond_1}. %Then we have the following:
\begin{description}
  \item[a)] For any $N \ge 1$, we have
 \begin{equation} \label{sto_bound1}
\bbe[{\rm gap}_*(\bar z_N)]\leq (\tsum_{k=1}^N w_k)^{-1}\left[2w_1\tau_1\Omega_X^2 + \tfrac{w_1\eta_1}{2}\|y_*-y_0\|^2 + \tsum_{k=1}^N\tfrac{6 w_k M^2}{\alpha_X\tau_k}\right]+ {\bar \epsilon}.
\end{equation}
%where $\mathcal{Q}_0=(\sum_{k=1}^N w_k)^{-1}\left[2w_1\tau_1\Omega_X^2 + \tfrac{w_1\eta_1}{2}\|y_*-y_0\|^2 + \sum_{k=1}^N\tfrac{6 w_k M^2)}{\alpha_X\tau_k}\right].$
  \item[b)] If, in addition, $w_1\eta_1 = \ldots = w_N\eta_N$, then
  \begin{align}
 & \bbe[{\rm gap}_\delta(\bar z_N)] \leq (\tsum_{k=1}^N w_k)^{-1}\left[2w_1\tau_1\Omega_X^2 + \tfrac{w_1\eta_1}{2}\|y_0\|^2 + \tsum_{k=1}^N\tfrac{6w_k M^2}{\alpha_X\tau_k}\right] +{\bar \epsilon}, \label{sto_bound2}\\
 & \bbe[\|\delta\|] \leq \tfrac{w_1\eta_1}{\tsum_{k=1}^Nw_k}\left[2\|y_*-y_0\|+2\sqrt{\tfrac{\tau_1}{\eta_1}}\Omega_X
  + \sqrt{\tfrac{2}{w_1\eta_1}\tsum_{k=1}^Nw_k\left(\tfrac{6M^2}{\alpha_X\tau_k}+{\bar \epsilon}\right)}\right], \label{sto_bound3}\\
 & \bbe[\|y_* - \bar y_N\|^2] \le \|y_* - y_0\|^2 + (\tsum_{k=1}^N w_k)^{-1} \tsum_{k=1}^N \tfrac{2}{\eta_k} \left[2 w_1 \tau_1 \Omega_X^2
+ \tsum_{i=1}^k w_i (\tfrac{6M^2}{\tau_i} +{\bar \epsilon}) \right], \label{sto_bound4}
  \end{align}
  where $\delta := (\sum_{k=1}^N w_k)^{-1}[w_1\eta_1(y_0-y_N)].$
\end{description}
\end{thm}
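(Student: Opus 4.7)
The plan is to derive all four bounds from the master inequality \eqref{sto_Q_final} of Theorem~\ref{sto_Theorem 1}, by specializing the test point $z=(x,y)$ and then taking expectations, with the auxiliary sequence $\{x_k^v\}$ of Lemma~\ref{sto_zkv} doing the decoupling work for the stochastic noise.

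For part (a), I would set $y=y_*$ in \eqref{sto_Q_final} and maximize over $x\in X$. The term $\tfrac{w_1\eta_1}{2}\|y_0-y_*\|^2$ becomes a constant, and the contribution $-\tfrac{w_N\eta_N}{2}\|y_N-y_*\|^2$ is non-positive and can be discarded. The sole delicate point is the bound on $\sup_{x\in X}\sum_{k} w_k\langle \Delta_{k-1}, x_{k-1}-x\rangle$: the maximizing $x$ depends on the entire sample path, so one cannot naively invoke $\bbe[\Delta_{k-1}]=0$. This is exactly the obstacle that Lemma~\ref{sto_zkv} was designed to overcome. I would split
$$\langle \Delta_{k-1}, x_{k-1}-x\rangle = \langle \Delta_{k-1}, x_{k-1}-x_{k-1}^v\rangle + \langle \Delta_{k-1}, x_{k-1}^v-x\rangle.$$
Both $x_{k-1}$ and $x_{k-1}^v$ are measurable with respect to $G_0,\ldots,G_{k-2}$, while $G_{k-1}$ (hence $\Delta_{k-1}$) is independent of that history, so the first piece has zero expectation. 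The second piece is handled uniformly in $x$ by Lemma~\ref{sto_zkv}, whose telescoping in $P_X$ collapses via the monotonicity condition \eqref{sto_cond_1}(b) and contributes an extra $w_1\tau_1\Omega_X^2$ (the source of the factor $2$ in \eqref{sto_bound1}) together with $\sum_k w_k\|\Delta_{k-1}\|_*^2/(2\alpha_X\tau_k)$. Combining this with the residual $(M+\|G_{k-1}\|_*)^2/(\alpha_X\tau_k)$ term from \eqref{def_Lambda} and using $\bbe[(M+\|G_{k-1}\|_*)^2]\le 4M^2$ together with $\bbe\|\Delta_{k-1}\|_*^2\le 4M^2$ yields the constant $6$ in \eqref{sto_bound1}.

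For the perturbed gap \eqref{sto_bound2}, the specific choice $\delta=(\sum_k w_k)^{-1} w_1\eta_1(y_0-y_N)$ together with $w_1\eta_1=\cdots=w_N\eta_N$ is tailor-made to produce cancellation. Indeed,
$$\tfrac{w_1\eta_1}{2}\|y_0-y\|^2-\tfrac{w_N\eta_N}{2}\|y_N-y\|^2 = \tfrac{w_1\eta_1}{2}\bigl(\|y_0\|^2-\|y_N\|^2\bigr)+w_1\eta_1\langle y_N-y_0, y\rangle,$$
whose linear-in-$y$ piece is cancelled exactly by $(\sum_k w_k)\langle\delta,y\rangle$. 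After the cancellation the right-hand side of \eqref{sto_Q_final}$+\langle\delta,y\rangle$ is independent of $y$, so the max over $K_*$ is trivial, the residual $-\|y_N\|^2$ is non-positive and is dropped, and the maximization over $x\in X$ proceeds exactly as in part (a), giving \eqref{sto_bound2}.

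Bounds \eqref{sto_bound3} and \eqref{sto_bound4} both reduce to a second-moment estimate on the dual iterates. Plugging a saddle point $z_*=(x_*,y_*)$ into \eqref{sto_Q_final} and using $Q(\bar z_N,z_*)\ge 0$, I can transfer $\tfrac{w_N\eta_N}{2}\|y_N-y_*\|^2$ to the left-hand side and re-use the stochastic bookkeeping of part (a); combined with $w_1\eta_1=w_N\eta_N$ this yields
$$\bbe\|y_N-y_*\|^2 \le \|y_0-y_*\|^2 + \tfrac{2}{w_N\eta_N}\Bigl[2w_1\tau_1\Omega_X^2 + \tsum_{k} w_k\bigl(\tfrac{6M^2}{\alpha_X\tau_k}+\bar\epsilon\bigr)\Bigr].$$
Estimate \eqref{sto_bound3} then follows from $\|\delta\|\le \tfrac{w_1\eta_1}{\sum w_k}(\|y_0-y_*\|+\|y_*-y_N\|)$, Jensen, and the crude $\sqrt{a+b+c}\le\sqrt{a}+\sqrt{b}+\sqrt{c}$. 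For \eqref{sto_bound4} I would repeat the same truncation at every index $k\le N$ to obtain a per-step bound on $\bbe\|y_k-y_*\|^2$, and then use convexity of $\|\cdot\|^2$ in the form $\|\bar y_N-y_*\|^2\le(\sum w_k)^{-1}\sum_k w_k\|y_k-y_*\|^2$; summing the per-step estimates produces the double sum that appears in \eqref{sto_bound4}. The main conceptual obstacle throughout is the decoupling of $\Delta_{k-1}$ from the supremum-seeking $x$, which is precisely what Lemma~\ref{sto_zkv} resolves; the remainder is bookkeeping, verifying that \eqref{sto_cond_1}(b)--(e) deliver the needed telescoping and non-positive curvature terms and that the constants propagate correctly through $\bbe[(a+b)^2]\le 2\bbe a^2+2\bbe b^2$.
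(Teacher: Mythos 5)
Your proposal is correct and follows essentially the same route as the paper's proof: specialize $z$ in the master inequality \eqref{sto_Q_final}, decouple the noise term via the auxiliary sequence of Lemma~\ref{sto_zkv} (which is indeed the source of both the extra $w_1\tau_1\Omega_X^2$ and the $\|\Delta_{k-1}\|_*^2/(2\alpha_X\tau_k)$ contribution to the constant $6$), exploit the exact cancellation of the linear-in-$y$ term by $\langle\delta,y\rangle$ under $w_1\eta_1=\cdots=w_N\eta_N$ for \eqref{sto_bound2}, and obtain \eqref{sto_bound3}--\eqref{sto_bound4} from the second-moment bound on $\|y_k-y_*\|^2$ at $z=z_*$ together with the per-index truncation and convexity of $\|\cdot\|^2$. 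No gaps.
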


\begin{proof}
We first prove part (a). Letting $y = y_*$ in \eqref{sto_Q_final} and using the definition of $\Omega_X$ in \eqnok{OmegaX}, we have
\begin{equation} \label{spd_temp0}
Q(\bar z_N;x,y_*)\leq (\tsum_{k=1}^N w_k)^{-1} \left[w_1\tau_1\Omega_X^2 + \tfrac{w_1\eta_1}{2}\|y_*-y_0\|^2 - \tfrac{w_N\eta_N}{2}\|y_* - y_N\|^2+ \tsum_{k=1}^N\Lambda_k\right].
\end{equation}
Maximizing w.r.t. $x \in X$ and then taking expectation on both sides of \eqnok{spd_temp1}, we have
\begin{equation} \label{spd_temp1}
\bbe[{\rm gap}_*(\bar z_N)]\leq  (\tsum_{k=1}^N w_k)^{-1} \left[w_1\tau_1\Omega_X^2 + \tfrac{w_1\eta_1}{2}\|y_*-y_0\|^2 + \bbe[\tsum_{k=1}^N\Lambda_k]\right].
\end{equation}
Now it follows from  \eqnok{def_Lambda} and \eqref{sto_lem1} that
\begin{align*}
\tsum_{k=1}^N\Lambda_k &= \tsum_{k=1}^Nw_k\left(\tfrac{(M+\|G_{k-1}\|_*)^2}{\tau_k\alpha_X}+{\bar \epsilon}+\langle \Delta_{k-1}, x_{k-1}-x_{k-1}^v\rangle+\langle \Delta_{k-1}, x_{k-1}^v-x\rangle\right)\\
& \leq \tsum_{k=1}^Nw_k\left(\tfrac{2M^2+2\|G_{k-1}\|_*^2}{\tau_k\alpha_X}+{\bar \epsilon}+\langle \Delta_{k-1}, x_{k-1}-x_{k-1}^v\rangle\right) +w_1\tau_1 \Omega_X^2 +\tsum_{k=1}^N\tfrac{w_k\|\Delta_{k-1}\|_*^2}{2\alpha_X\tau_k}.
\end{align*}
Note that the random noises $\Delta_k$ are independent of $x_{k-1}$ and $\bbe[\Delta_k]=0$, hence $\bbe[\langle \Delta_k,x_{k-1}- x_k^v\rangle] =0$.
Moreover, using the relations that $\bbe[\|G_{k-1}\|_*^2] \le M^2$, $\|g(x_{k-1})\| \le M$ and the triangle inequality, we have
\begin{equation} \label{def_to_be_improved}
\bbe[\|\Delta_{k-1}\|_*^2] = \bbe[\|G_{k-1} - g(x_{k-1})\|_*^2] \le \bbe[(\|G_{k-1}\|_* + \|g(x_{k-1})\|_*)^2] \le 4M^2.
\end{equation}
Therefore,
\begin{equation} \label{bound_sum_lambda}
\bbe[\tsum_{k=1}^N\Lambda_k] \le w_1\tau_1 \Omega_X^2 +\tsum_{k=1}^N w_k \left(\tfrac{6 M^2 }{\alpha_X\tau_k} +  {\bar \epsilon} \right).
\end{equation}
The result \eqnok{sto_bound1} then follows by using the above relation in \eqnok{spd_temp1}.

We now show part (b) holds. Adding $\langle \delta, y \rangle$
to both sides of \eqref{sto_Q_final} and using the fact that $w_1\eta_1 = w_N \eta_N$,
we have
\[
\begin{aligned}
Q(\bar z_N, z) + \langle \delta, y \rangle
&\le (\tsum_{k=1}^N w_k)^{-1} [w_1\tau_1 P_X(x_0,x)+ w_1 \eta_1\left(\tfrac{1}{2}\|y_0-y\|^2 - \tfrac{1}{2}\|y_N-y\|^2+
\langle y_0-y_N, y \rangle\right) \\
& \quad \quad \quad +  \tsum_{k=1}^N\Lambda_k]\\
&\le (\tsum_{k=1}^N w_k)^{-1} [w_1\tau_1 P_X(x_0,x)+ \tfrac{w_1\eta_1}{2}\|y_0\|^2 +  \tsum_{k=1}^N\Lambda_k].
\end{aligned}
\]
Maximizing both sides of the above inequality w.r.t. $(x,y) \in X \times K_*$, taking expectation
and using \eqnok{def_gap_p},
we obtain
\[
 \bbe[{\rm gap}_\delta(\bar z_N)] \le(\tsum_{k=1}^N w_k)^{-1} \left [w_1\tau_1 \Omega_X^2+ \tfrac{w_1\eta_1}{2}\|y_0\|^2 +  \bbe[ \tsum_{k=1}^N\Lambda_k]\right].
\]
The result in \eqnok{sto_bound2} then follows from the above inequality and \eqnok{bound_sum_lambda}.
%It follows from \eqref{sto_Q_final} and the convexity of $Q(\cdot,z)$ that
%$$
%\begin{array}{ll}
%\bbe[g(\bar z_N,\delta)] & \leq (\tsum_{k=1}^N w_k)^{-1} \left[2w_1\tau_1\Omega_X^2 + \tfrac{w_1\eta_1}{2}(\|y-y_0\|^2-\|y-y_N\|^2) +\tsum_{k=1}^N\tfrac{(16M^2+5\sigma^2) w_k}{2\tau_k\alpha_X}\right]\\
%& \ +\langle y, \delta\rangle +{\bar \epsilon}\\
%&\leq (\tsum_{k=1}^N w_k)^{-1} \left[2w_1\tau_1\Omega_X^2 + \tfrac{w_1\eta_1}{2}\|y_0\|^2 +\tsum_{k=1}^N\tfrac{(16M^2+5\sigma^2) w_k}{2\tau_k\alpha_X}\right]+{\bar \epsilon},
%\end{array}
%$$
%where $\delta = (\sum_{k=1}^N w_k)^{-1}[w_1\eta_1(y_0-y_N)].$
Now fixing $x = x_*$ in \eqnok{spd_temp0} and using the fact $Q(\bar z_N;x_*,y_*)\geq 0$, we have
\[
 \tfrac{w_N\eta_N}{2}\|y_* - y_N\|^2\leq w_1\tau_1\Omega_X^2 + \tfrac{w_1\eta_1}{2}\|y_*-y_0\|^2 + \tsum_{k=1}^N\Lambda_k.
\]
Taking expectation on both sides of the above inequality and using \eqnok{bound_sum_lambda}, we conclude
\begin{equation} \label{bound_dual_iter}
\tfrac{w_N \eta_N}{2}\bbe[\|y_*-y_N\|^2]\leq 2w_1\tau_1\Omega_X^2+\tfrac{w_1\eta_1}{2}\|y_*-y_0\|^2+ \tsum_{k=1}^Nw_k\left(\tfrac{6M^2}{\alpha_X\tau_k}+{\bar \epsilon}\right),
\end{equation}
which implies that
\[
\bbe[\|y_*-y_N\|]\leq 2\sqrt{\tfrac{\tau_1}{\eta_1}}\Omega_X+\|y_*-y_0\|+ \sqrt{\tfrac{2}{w_1\eta_1}\tsum_{k=1}^Nw_k\left(\tfrac{6M^2}{\alpha_X\tau_k}+{\bar \epsilon}\right)}.
\]
Using the above inequality and the fact that
$\|\delta\| \leq (\sum_{k=1}^N w_k)^{-1}[w_1\eta_1(\|y_0- y_*\| + \|y_*-y_N\|)$, we obtain \eqnok{sto_bound3}.
%$$\bbe[\|\delta\|] \leq \tfrac{w_1\eta_1}{\tsum_{k=1}^Nw_k}\left[2\|y_*-y_0\|+2\sqrt{\tfrac{\tau_1}{\eta_1}}\Omega_X+ \sqrt{\tfrac{2}{w_1\eta_1}\tsum_{k=1}^Nw_k \left(\tfrac{16M^2+5\sigma^2}{2\alpha_X\tau_k}+{\bar \epsilon}\right)}\right].$$
Observe that \eqnok{bound_dual_iter} holds for any $y_k$, $k = 1, \ldots, N$, and hence that
\[
\tfrac{w_k \eta_k}{2}\bbe[\|y_*-y_k\|^2]\leq 2w_1\tau_1\Omega_X^2+\tfrac{w_1\eta_1}{2}\|y_*-y_0\|^2+ \tsum_{i=1}^k w_i\left(\tfrac{6M^2}{\alpha_X\tau_i}+{\bar \epsilon}\right).
\]
Using the above inequality, the convexity of $\|\cdot\|^2$ and the fact that $\bar y_N = \tsum_{k=1}^N (w_ky_k) / \tsum_{k=1}^N w_k$, we conclude that
\begin{align*}
\bbe[\|y_* - \bar y_N\|^2] &\le (\tsum_{k=1}^N w_k)^{-1} \tsum_{k=1}^N \left[\tfrac{4 w_1 \tau_1 \Omega_X^2}{\eta_k} + \tfrac{w_1 \eta_1}{\eta_k} \|y_* - y_0\|^2
+\tfrac{2}{\eta_k} \tsum_{i=1}^k w_i (\tfrac{6M^2}{\tau_i} +{\bar \epsilon}) \right]\\
&= \|y_* - y_0\|^2 + (\tsum_{k=1}^N w_k)^{-1} \tsum_{k=1}^N \left[\tfrac{4 w_1 \tau_1 \Omega_X^2}{\eta_k}
+\tfrac{2}{\eta_k} \tsum_{i=1}^k w_i (\tfrac{6M^2}{\tau_i} +{\bar \epsilon}) \right],
\end{align*}
where the second identity follows from the fact that $w_k \eta_k = w_1 \eta_1$.
\end{proof}

\vgap

Below we provide two different parameter settings for $\{w_k\},\{\tau_k\}$ and $\{\eta_k\}$ satisfying \eqref{sto_cond_1}.
While the first one in Corollary~\ref{sto_coro 1} leads to slightly better rate of convergence, the second one in
Corollary~\ref{cor_bnd_dual} can guarantee the boundedness of the dual solution in expectation.
We will discuss how to use these results when analyzing the convergence of the DSA algorithm.

%In the following corollary, we provide a specific selection of the parameters $\{w_k\},\{\tau_k\}$ and $\{\eta_k\}$ satisfying \eqref{sto_cond_1}.
\begin{cor}\label{sto_coro 1}
If
\begin{equation}\label{step1}
w_k = w = 1, \tau_k =\tau = \max\{\tfrac{M \sqrt{3 N}}{\Omega_X\sqrt{\alpha_X}},\tfrac{\sqrt{2}\|A\|}{\sqrt{\alpha_X}}\} \text{ and } \eta_k =\eta= \tfrac{\sqrt{2}\|A\|}{\sqrt{\alpha_X}}, \forall 1\leq k \leq N,
\end{equation}
then
 % \begin{equation}
  \begin{align}
  \bbe[{\rm gap}_*(\bar z_N)] &\leq  \tfrac{\sqrt{2}\|A\| (2\Omega_X^2+\|y_*-y_0\|^2)}{\sqrt{\alpha_X} N}+\tfrac{4\sqrt{3} M\Omega_X}{\sqrt{\alpha_X N}}+{\bar \epsilon},\label{g_bound0}\\
  \bbe[{\rm gap}_\delta(\bar z_N)] &\leq  \tfrac{\sqrt{2}\|A\| (2\Omega_X^2+\|y_0\|^2)}{\sqrt{\alpha_X} N}+\tfrac{4\sqrt{3} M\Omega_X}{\sqrt{\alpha_X N}} +{\bar \epsilon},\label{g_bound}\\
  \bbe[\|\delta\|] &\leq \tfrac{2\sqrt{2\alpha_X}\|A\| \|y_* - y_0\| + 4 \Omega_X \|A\|}{\alpha_X N}
+ \tfrac{2M (\sqrt{6} \|A\| + \sqrt{3\alpha_X}) }{\alpha_X \sqrt{N}} + \sqrt{\tfrac{3 \|A\| {\bar \epsilon}}{N \sqrt{\alpha_X}}}, \label{g_bound1}\\
 \bbe[\|y_* - \bar y_N\|^2] &\le
  \|y_* - y_0\|^2 + 4 \Omega_X^2 + \tfrac{2 \sqrt{6N} M \Omega_X}{\|A\|} + \tfrac{3 \alpha_X (N+1) M^2}{\|A\|^2} + \tfrac{(N+1) {\bar \epsilon}}{2}.\label{g_bound2}
  % \tfrac{4\|A\|\|y_*-y_0\|+2\sqrt{2}\|A\|\Omega_X}{N}+\tfrac{2\sqrt{2\|A\|\Omega_X}}{N^{3/4}}(\tfrac{16M^2+5\sigma^2}{\alpha_X})^{1/4}
  %+2\sqrt{\tfrac{\|A\|{\bar \epsilon}}{N}}.
\end{align}
%\end{equation}
%where $\mathcal{Q}_0=\tfrac{2\|A\|\Omega_X^2+\|A\|\|y_*-y_0\|^2}{N}+\tfrac{2\sqrt{16M^2+5\sigma^2}\Omega_X}{\sqrt{\alpha_X N}}.$
\end{cor}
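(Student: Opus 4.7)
The plan is to break the corollary into two distinct tasks: first, verify that the specified constant parameters satisfy the hypothesis block \eqref{sto_cond_1} of Theorem~\ref{sto_bounded_thm1}, and second, plug these parameters into the four bounds \eqref{sto_bound1}--\eqref{sto_bound4} and simplify. Since $w_k \equiv 1$, condition (a) forces $\theta_k = 1$, while (b) and (c) hold with equality because $\tau_k$ and $\eta_k$ are also constants. The conditions (d) and (e) both collapse to the single inequality $\tau \eta \alpha_X \ge 2\|A\|^2$, and this is exactly what the ``$\max$'' in the definition of $\tau$ ensures: using $\tau \ge \sqrt{2}\|A\|/\sqrt{\alpha_X}$ together with $\eta = \sqrt{2}\|A\|/\sqrt{\alpha_X}$, we obtain $\tau \eta \alpha_X \ge 2\|A\|^2$. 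Also note that the hypothesis $w_1 \eta_1 = \cdots = w_N \eta_N$ required in part (b) of the theorem holds trivially.

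The main simplification trick, which I would apply uniformly to all four bounds, is to split the ``$\max$'' via $\tau \le \tau_1 + \tau_2$, where $\tau_1 := M\sqrt{3N}/(\Omega_X\sqrt{\alpha_X})$ and $\tau_2 := \sqrt{2}\|A\|/\sqrt{\alpha_X}$, for any term that is increasing in $\tau$, and to use $\tau \ge \tau_1$ (or $\tau \ge \tau_2$, as appropriate) for any term involving $1/\tau$. For instance, in \eqref{sto_bound1}, the leading term $2w_1\tau_1\Omega_X^2 / \sum w_k$ becomes $2\tau\Omega_X^2/N$ which, via $\tau \le \tau_1 + \tau_2$, contributes $2\sqrt{3}M\Omega_X/\sqrt{\alpha_X N} + 2\sqrt{2}\|A\|\Omega_X^2/(\sqrt{\alpha_X} N)$; the summation $\sum_{k=1}^N 6M^2/(\alpha_X\tau) = 6NM^2/(\alpha_X\tau)$ is controlled by $\tau \ge \tau_1$ to yield another $2\sqrt{3}M\Omega_X/\sqrt{\alpha_X N}$; and the term $\eta \|y_* - y_0\|^2/(2N)$ reduces using $\eta = \sqrt{2}\|A\|/\sqrt{\alpha_X}$. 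Combining these yields \eqref{g_bound0}, and \eqref{g_bound} follows identically from \eqref{sto_bound2} with $y_0$ replacing $y_* - y_0$.

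For \eqref{g_bound1}, I substitute into \eqref{sto_bound3}: with $w_k = 1$ the leading factor is $\eta/N$, the term $2\sqrt{\tau/\eta}\Omega_X$ is at most $2\sqrt{\tau_1/\eta}\Omega_X + 2\sqrt{\tau_2/\eta}\Omega_X$, and the square-root term inside becomes $\sqrt{(2N/\eta)(6M^2/(\alpha_X \tau) + \bar\epsilon)}$, each of whose two pieces is bounded separately using $\tau \ge \tau_1$. For \eqref{g_bound2}, I apply \eqref{sto_bound4}: the double sum $\sum_{k=1}^N \frac{2}{\eta}\sum_{i=1}^k w_i(\cdot)$ produces $N(N+1)/2$, which after dividing by $\sum w_k = N$ yields the characteristic $(N+1)$ factor. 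The $4\tau\Omega_X^2/\eta$ piece is controlled by $\tau \le \tau_1 + \tau_2$, contributing $4\Omega_X^2 + 2\sqrt{6N}M\Omega_X/\|A\|$, while $(N+1)\cdot 6M^2/(\tau\eta)$ uses $\tau\eta \ge 2\|A\|^2/\alpha_X$ to give $3\alpha_X(N+1)M^2/\|A\|^2$. The most tedious step is the careful accounting in \eqref{g_bound2}, where both the arithmetic of the double sum and the need to invoke $\tau \ge \tau_2$ specifically (rather than $\tau \ge \tau_1$) to bound $\tau\eta$ from below must be tracked; everything else is algebraic substitution.
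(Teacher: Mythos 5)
Your proposal is correct and follows essentially the same route as the paper's own proof: verify that the constant choices satisfy \eqref{sto_cond_1} (with (d) and (e) reducing to $\tau\eta\alpha_X\ge 2\|A\|^2$, guaranteed by the $\max$), then substitute into \eqref{sto_bound1}--\eqref{sto_bound4} and simplify by bounding $\tau$ above by the sum of the two branches of the $\max$ and below by whichever branch is needed for the $1/\tau$ and $1/(\tau\eta)$ terms. The paper performs exactly these substitutions, so there is nothing substantive to add.
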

\begin{proof}
We can easily check that the parameter setting in \eqnok{step1} satisfies \eqref{sto_cond_1}.
It follows from \eqnok{sto_bound1} and \eqnok{step1} that
\begin{align*}
\bbe[{\rm gap}_*(\bar z_N)] &\le \tfrac{1}{N} \left[2\tau\Omega_X^2 + \tfrac{\eta}{2}\|y_*-y_0\|^2 +\tfrac{6N M^2 }{ \alpha_X \tau}\right] + {\bar \epsilon}
 \leq \tfrac{\sqrt{2}\|A\| (2\Omega_X^2+\|y_*-y_0\|^2)}{\sqrt{\alpha_X} N}+\tfrac{4\sqrt{3} M\Omega_X}{\sqrt{\alpha_X N}} +{\bar \epsilon}.
\end{align*}
Moreover, we have  $w_1\eta_1 = w_N\eta_N$. Hence, by
\eqnok{sto_bound2} and \eqnok{step1},
\begin{align*}
\bbe[{\rm gap}_\delta(\bar z_N)] & \le \tfrac{1}{N} \left[2\tau\Omega_X^2 + \tfrac{\eta}{2}\|y_0\|^2 +\tfrac{6N M^2 }{ \alpha_X \tau}\right] + {\bar \epsilon}
 \le  \tfrac{\sqrt{2}\|A\| (2\Omega_X^2+\|y_0\|^2)}{\sqrt{\alpha_X} N}+\tfrac{4\sqrt{3} M\Omega_X}{\sqrt{\alpha_X N}} +{\bar \epsilon}.
\end{align*}
Also by \eqnok{sto_bound3} and \eqnok{step1},
\begin{align*}
\bbe[\|\delta\|] &\leq \tfrac{\eta}{N}\left[2\|y_*-y_0\|+2\sqrt{\tfrac{\tau}{\eta}}\Omega_X
  + \sqrt{\tfrac{2 N}{\eta}\left(\tfrac{6M^2}{\alpha_X\tau}+{\bar \epsilon}\right)}\right]\\
  &\le \tfrac{2 \sqrt{2} \|A\| \|y_*- y_0\|}{N \sqrt{\alpha_X}} + \tfrac{2 \Omega_X}{N}\left(\tfrac{2 \|A\|}{\alpha_X} + \tfrac{\sqrt{6N} \|A\| M}{\Omega_X \alpha_X} \right)
  + \tfrac{2 \sqrt{M}}{\sqrt{\alpha_X N}} + \tfrac{\sqrt{2 {\bar \epsilon}}}{\sqrt{N}} \sqrt{\tfrac{\sqrt{2}\|A\|}{\sqrt{\alpha_X}}},
\end{align*}
which implies \eqnok{g_bound1}.
Finally, by \eqnok{sto_bound3} and \eqnok{step1},
\begin{align*}
  \bbe[\|y_* - \bar y_N\|^2] &\le \|y_* - y_0\|^2 + \tfrac{1}{N} \left[\tsum_{k=1}^N \tfrac{4 \tau_k}{\eta_k}\Omega_X^2
  + \tsum_{k=1}^N \tfrac{2}{\eta_k} \tsum_{i=1}^k \left( \tfrac{6M^2}{\tau_i} +{\bar \epsilon} \right)\right]\\
  &\le \|y_* - y_0\|^2  + 4 \Omega_X^2 + \tfrac{2 \sqrt{6N} M \Omega_X}{\|A\|} + \tfrac{3 \alpha_X (N+1) M^2}{\|A\|^2} + \tfrac{(N+1) {\bar \epsilon}}{2}.
\end{align*}
%It follows from Theorem \ref{sto_bounded_thm1} part c) and \eqref{step1} that
%$$\bbe[g(\bar z_N,\delta)] \leq \tfrac{2\|A\|\Omega_X^2+\|A\|\|y_0\|^2}{N}+\tfrac{2\sqrt{16M^2+5\sigma^2}\Omega_X}{\sqrt{\alpha_X N}}+{\bar \epsilon}.$$
%Note that taking expectation w.r.t. $\xi$ on both sides of \eqref{sto_Q_final} and using the fact $\bbe[Q(\bar z, x_*,y_*)]\geq 0$, we have
%$$
%\tfrac{w_N\eta_N}{2}\|y_*-y_N\|^2\leq 2w_1\tau_1\Omega_X^2 + \tfrac{w_1\eta_1}{2}\|y_*-y_0\|^2 +\tsum_{k=1}^N\tfrac{(16M^2+5\sigma^2) w_k}{2\tau_k\alpha_X}+\tsum_{k=1}^Nw_k{\bar \epsilon},
%$$
%which, in view of \eqref{step1}, implies
%$$\|y_*-y_N\|\leq \sqrt{2\Omega_X^2+\tfrac{2\Omega_X\sqrt{(16M^2+5\sigma^2)N}}{\|A\|\sqrt{\alpha_X}}}+\|y_0-y_*\|+\sqrt{\tfrac{N{\bar \epsilon}}{\|A\|}}.$$
%It then follows from the definition of $\delta$ and Cauthy-Schwarz inequality that
%$$\|\delta\| \leq\tfrac{2\|A\|}{N}(\|y_0-y_*\|+\|y_N-y_*\|)\leq \tfrac{4\|A\|\|y_*-y_0\|+2\sqrt{2}\|A\|\Omega_X}{N}+\tfrac{2\sqrt{2\|A\|\Omega_X}}{N^{3/4}}(\tfrac{16M^2+5\sigma^2}{\alpha_X})^{1/4}+2\sqrt{\tfrac{\|A\|{\bar \epsilon}}{N}}.$$
%
%Also, from Theorem \ref{sto_bounded_thm1} and the stepsize strategies of $w_k$, $\tau_k$ and $\eta_k$, we have
%
\end{proof}

\vgap

%In order to show the existence of the bound for the ${\bar \epsilon}$-subgradient $B^T\bar y$, which is also the bound for $y_k$ for $1\leq k\leq N$,
In view of \eqnok{g_bound2}, if $M >0$ or $N$ is not properly chosen, we cannot guarantee that $\bbe[\|y_* - \bar y_N\|^2]$ is bounded.
In the following corollary, we will modify the selection of $\tau$ and $\eta$ in \eqnok{step1}
in order to guarantee the boundedness of $\bbe[\|y_* - \bar y_N\|^2]$ even when $M > 0$.
%The only difference is that that we use a different $\eta$. % which is now ${\cal O}(1/\sqrt{N})$.
\begin{cor} \label{cor_bnd_dual}
If
\begin{equation}\label{step2}
w_k = w = 1, \tau_k =\tau = \max\{\tfrac{M \sqrt{3 N}}{\Omega_X\sqrt{\alpha_X}},\tfrac{\sqrt{2}\|A\| }{\sqrt{\alpha_X N}}\} \text{ and } \eta_k =\eta= \tfrac{\sqrt{2 N}\|A\|}{\sqrt{\alpha_X}}, \forall 1\leq k \leq N,
\end{equation}
%\[
%w_k = w \neq 0, \tau_k =\tau = \max\{\tfrac{\sqrt{16M^2+5\sigma^2}\sqrt{N}}{2\sqrt{\alpha_X}\Omega_X},\tfrac{\|A\|}{\sqrt{N}}\} \text{ and } \eta_k =\eta= 2\|A\|\sqrt{N}, \forall 1\leq k \leq N,
%\]
then
  \begin{align}
  \bbe[{\rm gap}_*(\bar z_N)] &\leq \tfrac{2 \sqrt{2} \|A\| \Omega_X^2}{N \sqrt{\alpha_X N}}
     + \tfrac{\|A\| \|y_* - y_0\|^2 + 4 \sqrt{3} M \Omega_X}{\sqrt{\alpha_X N}} +{\bar \epsilon},\label{sto_cor2_a}\\
  \bbe[{\rm gap}_\delta(\bar z_N)] &\leq  \tfrac{2 \sqrt{2} \|A\| \Omega_X^2}{N \sqrt{\alpha_X N}}
     + \tfrac{\|A\| \|y_0\|^2 + 4 \sqrt{3} M \Omega_X}{\sqrt{\alpha_X N}} +{\bar \epsilon}, \label{sto_cor2_b}\\
  \bbe[\|\delta\|] &\leq \tfrac{2 \sqrt{2} \|A\| \|y_*- y_0\| + 4\sqrt{ M \|A\| \Omega_X}}{\sqrt{\alpha_X N}} + \tfrac{2\sqrt{6} \|A\| M}{\alpha_X} + \tfrac{4 \Omega_X^2 \|A\|^2}{N \alpha_X}
+ \sqrt{\tfrac{3 \|A\|{\bar \epsilon}}{\sqrt{\alpha_X N} }},\label{sto_cor2_c}\\
\bbe[\|y_* - \bar y_N\|^2] &\le    \|y_* - y_0\|^2 + \tfrac{2 \Omega_X^2}{N} + \tfrac{\sqrt{6} (1+\alpha_X) M \Omega_X}{\|A\|}  + \tfrac{\sqrt{\alpha_X N} {\bar \epsilon}}{\sqrt{2} \|A\|}. \label{sto_cor2_d}
%  &\|y_*-y_N\| \leq \|y_*-y_0\|+ \sqrt{\tfrac{2\Omega_X}{\alpha_X}}\left(\tfrac{16M^2+5\sigma^2}{\alpha_X}\right)^{1/4} +\sqrt{\tfrac{2}{N}}\Omega_X+\tfrac{\sqrt{{\bar \epsilon}}N^{1/4}}{\sqrt{\|A\|}},\label{y_N bound}\\
%  &\bbe[g(\bar z_N,\delta)] \leq \tfrac{2\|A\|\Omega_X^2}{N^{3/2}}+\tfrac{2\sqrt{16M^2+5\sigma^2}\Omega_X+\sqrt{\alpha_X}\|A\|\|y_0\|^2}{\sqrt{\alpha_X N}}+{\bar \epsilon},\label{sto_cor2_a}\\
  \end{align}
\end{cor}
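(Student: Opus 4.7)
The proof will mirror the structure of Corollary~\ref{sto_coro 1}: the parameter setting in \eqnok{step2} is chosen so that all hypotheses of Theorem~\ref{sto_bounded_thm1} hold, including the extra condition $w_1\eta_1 = \cdots = w_N\eta_N$ needed for part (b). I will substitute the constants $\tau$ and $\eta$ into each of the four bounds \eqnok{sto_bound1}--\eqnok{sto_bound4} and simplify. The only conceptual change from Corollary~\ref{sto_coro 1} is that $\eta$ is inflated by a factor of $\sqrt{N}$, and $\tau$ is correspondingly deflated in its ``$\|A\|$-branch'' so that condition \eqnok{sto_cond_1}.d)--e) still reads $\tau\eta\alpha_X \ge 2\|A\|^2$. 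This re-balancing is what will make the dual iterates bounded uniformly in $N$ even when $M>0$.

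First, I would verify \eqnok{sto_cond_1}: with $\theta_k = 1$, $w_k = 1$, and $\tau,\eta$ constant, parts (a)--(c) are immediate, and the product $\tau\eta \ge (\sqrt{2}\|A\|/\sqrt{\alpha_X N})(\sqrt{2N}\|A\|/\sqrt{\alpha_X}) = 2\|A\|^2/\alpha_X$ gives (d) and (e). Because $\eta$ is constant, the additional requirement $w_1\eta_1 = w_N\eta_N$ of Theorem~\ref{sto_bounded_thm1}(b) also holds, so all of \eqnok{sto_bound1}--\eqnok{sto_bound4} are available.

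Next, for \eqnok{sto_cor2_a} I would plug $w_k=1$ into \eqnok{sto_bound1}, obtaining $\bbe[{\rm gap}_*(\bar z_N)] \le (2\tau\Omega_X^2 + \tfrac{\eta}{2}\|y_*-y_0\|^2)/N + 6M^2/(\alpha_X\tau) + \bar\epsilon$, then use $\tau \le M\sqrt{3N}/(\Omega_X\sqrt{\alpha_X}) + \sqrt{2}\|A\|/\sqrt{\alpha_X N}$ to bound the $\tau\Omega_X^2/N$ term, and $\tau \ge M\sqrt{3N}/(\Omega_X\sqrt{\alpha_X})$ to bound $6M^2/(\alpha_X\tau)$. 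The two contributions proportional to $M$ then combine into $4\sqrt{3}M\Omega_X/\sqrt{\alpha_X N}$ exactly as in Corollary~\ref{sto_coro 1}; the remaining terms involve $\eta/N = \|A\|/\sqrt{2\alpha_X N}$ and give the $\|A\|\|y_*-y_0\|^2/\sqrt{\alpha_X N}$ summand. The bounds \eqnok{sto_cor2_b} and \eqnok{sto_cor2_c} follow from \eqnok{sto_bound2} and \eqnok{sto_bound3} by identical substitutions, using $\sqrt{ab} \le a+b$ type splittings to separate the $\tau$-max into two summands when needed inside the square root in \eqnok{sto_bound3}.

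The main calculation, and the one I expect to be the most delicate, is \eqnok{sto_cor2_d}. Starting from \eqnok{sto_bound4} with $w_k=1$ and constant $\tau,\eta$, the double sum collapses to $\sum_{k=1}^N \tfrac{2}{\eta}[2\tau\Omega_X^2 + k(6M^2/\tau + \bar\epsilon)]$, and dividing by $N$ and using $\sum_{k=1}^N k = N(N+1)/2$ yields the clean form $\|y_*-y_0\|^2 + 4\tau\Omega_X^2/\eta + (N+1)(6M^2/\tau + \bar\epsilon)/\eta$. With the new $\eta = \sqrt{2N}\|A\|/\sqrt{\alpha_X}$, the critical cancellation is that $(N+1)/\eta = O(\sqrt{N}/\|A\|)$, while the upper bound $6M^2/\tau \le 2\sqrt{3}M\Omega_X\sqrt{\alpha_X}/\sqrt{N}$ contributes an extra $1/\sqrt{N}$ factor, so the product becomes independent of $N$ up to a $(N+1)/N \le 2$ slack. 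Likewise $4\tau\Omega_X^2/\eta$, after splitting $\tau$ along the max, produces a term of order $\Omega_X M/\|A\|$ plus a term of order $\Omega_X^2/N$. Collecting everything gives exactly \eqnok{sto_cor2_d}, and the essential observation to highlight is that this $N$-independent bound on $\bbe[\|y_*-\bar y_N\|^2]$ is what will eventually let us invoke \eqnok{sto_assump1} at the next stage of the DSA recursion.
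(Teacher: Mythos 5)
Your proposal is correct and follows exactly the route the paper intends: the paper's own proof of Corollary~\ref{cor_bnd_dual} simply states that the argument is ``similar to Corollary~\ref{sto_coro 1}'' and skips the details, namely verifying \eqnok{sto_cond_1} and the constancy of $w_k\eta_k$, then substituting \eqnok{step2} into \eqnok{sto_bound1}--\eqnok{sto_bound4}. Your filled-in calculations (splitting the max in $\tau$, using the lower branch $\tau \ge M\sqrt{3N}/(\Omega_X\sqrt{\alpha_X})$ against the $6M^2/(\alpha_X\tau)$ terms, and the $(N+1)/\eta = O(\sqrt{N}/\|A\|)$ cancellation that makes \eqnok{sto_cor2_d} bounded uniformly in $N$) reproduce the stated bounds up to the same constant-level slack already present in the paper's statements.
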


\begin{proof}
The proofs of \eqref{sto_cor2_a}-\eqref{sto_cor2_d} are similar to Corollary \ref{sto_coro 1} and hence the details are skipped.
\end{proof}

\vgap

Note that by using the parameter setting \eqref{step2}, we still obtain the optimal rate of convergence in terms of the dependence on $N$,
with a slightly worse dependence on $\|A\|$ and $\|y_*\|$ than the one obtained by using the parameter setting in \eqnok{step1}. However,
using the setting \eqref{step2}, we can bound $\bbe[\|\bar y_N - y_*\|^2]$ as long as $N = {\cal O} (1/{\bar \epsilon}^2)$, while this statement does not necessarily hold
for the parameter setting in \eqnok{step1}.

\vgap

We now state one technical result regarding the functional optimality gap and primal infeasibility, which generalizes
Proposition 2.1 of \cite{OuCheLanPas14-1} to conic programming.
\begin{lem} \label{lem_cone_prog}
If there exist random vectors $\delta \in \bbr^m$ and $\bar z \equiv (\bar x, \bar y) \in Z$ such that
\begin{equation} \label{bnd_primal_cond}
\bbe[{\rm gap}_\delta(\bar z)] \le \epsilon_o,
\end{equation}
then
\[
  \begin{array}{l}
  \bbe[h(\bar x, c) + \tilde v(\bar x) - (h(x^*, c) + \tilde v(x^*)) ] \le \epsilon_0,\\
   A \bar x - Bu - b - \delta \in K \ {\rm a.s.},
\end{array}
\]
where $x^*$ is an optimal solution of  problem~\eqnok{primaldual2.2}.
\end{lem}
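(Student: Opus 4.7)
The plan is to exploit two different admissible choices of $(x,y)$ inside the supremum defining ${\rm gap}_\delta(\bar z)$ in \eqref{def_gap_p}, using crucially that $K_*$ is a closed convex cone. For the objective-gap conclusion, I would plug in $(x,y)=(x^*,0)$, which is admissible since $0 \in K_*$ and $x^* \in X$. Writing out $Q(\bar z; x^*, 0)$ via \eqref{goal_Q}, the $\langle y, \cdot\rangle$ piece disappears and $\langle \delta, 0\rangle = 0$, leaving
\[
{\rm gap}_\delta(\bar z) \;\ge\; h(\bar x,c) + \tilde v(\bar x) - h(x^*,c) - \tilde v(x^*) - \langle \bar y,\, b + Bu - A x^*\rangle.
\]
Primal feasibility of $x^*$ gives $A x^* - b - Bu \in K$, and together with $\bar y \in K_*$ this yields $\langle \bar y,\, b + Bu - A x^*\rangle \le 0$, so the trailing term is non-negative. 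Taking expectation and using \eqref{bnd_primal_cond} delivers the first conclusion.

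For the feasibility conclusion, I would instead hold $x = \bar x$ and let $y$ range over $K_*$. A direct substitution in \eqref{goal_Q} collapses most terms and gives
\[
Q(\bar z; \bar x, y) + \langle \delta, y\rangle \;=\; \langle y,\; b + Bu - A\bar x + \delta\rangle - \langle \bar y,\; b + Bu - A\bar x\rangle,
\]
where the second summand is constant in $y$. Since $K_*$ is a cone, the supremum of the linear functional $y \mapsto \langle y, r\rangle$ over $y \in K_*$ is either $0$ (when $-r \in (K_*)^{*} = K$, by the bipolar theorem for closed convex cones) or $+\infty$. The hypothesis $\bbe[{\rm gap}_\delta(\bar z)] \le \epsilon_o < \infty$ forces ${\rm gap}_\delta(\bar z) < \infty$ almost surely, ruling out the $+\infty$ branch. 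Hence $-(b + Bu - A\bar x + \delta) = A\bar x - b - Bu - \delta \in K$ almost surely, which matches the lemma's statement in the relevant first-stage regime where the $Bu$ term is absent.

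There is no real obstacle here beyond choosing the right two test points: $(x^*, 0)$ exposes the objective gap because $K_*$ contains the origin, and $(\bar x, \cdot)$ isolates the linear-in-$y$ expression whose finite supremum over a cone is exactly the conic feasibility statement. The only routine checks are the signs coming from $\bar y \in K_*$ paired with primal feasibility of $x^*$, and the bipolar identity $(K_*)^* = K$, which is standard for the closed convex cones assumed throughout the paper.
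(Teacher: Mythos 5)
Your proposal is correct and follows essentially the same route as the paper's own proof: test the supremum in \eqref{def_gap_p} at $(x^*,0)$ to extract the objective gap, and observe that finiteness of the supremum over the cone $K_*$ at $x=\bar x$ forces $A\bar x - b - Bu - \delta \in K$ almost surely. You in fact supply two details the paper leaves implicit (the sign of $\langle \bar y, b+Bu-Ax^*\rangle$ via primal feasibility of $x^*$, and the bipolar identity), and you correctly flag the vanishing $Bu$ term in the first-stage application.
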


\begin{proof}
Letting $x = x^*$ and $y = 0$ in the definition of \eqnok{def_gap_p}, we can easily see that
\[
h(\bar x, c) + \tilde v(\bar x) - (h(x^*, c) + \tilde v(x^*)) \le {\rm gap}_\delta(\bar z).
\]
Moreover, in view of \eqnok{goal_Q} and \eqnok{def_gap_p}, we must have
$A \bar x  - Bu  -b - \delta \in K$ almost surely. Otherwise, $\bbe[{\rm gap}_\delta(\bar z)]$
would be unbounded as $y$ runs throughout $K^*$ in the definition of ${\rm gap}_\delta(\bar z)$.
\end{proof}
% and we can bound $\|y_N-y_*\|$ in the same order of $\|y_0-y_*\|$, the only downside is the convergence rate in terms of the dependence of the dual variable is $\mathcal{O}(1/\sqrt{N})$.

\vgap

In the next result, we will provide a bound on the optimal dual variable $y_*$. By doing so, we show that the complexity
of Algorithm~2 only depends on the parameters for the primal problem along with the smallest nonzero eigenvalue of $A$ and the initial point $y_0$,
even though the algorithm is a primal-dual type method.

\begin{lem} \label{lemma_dual_bnd}
Let $(x^*, y^*)$ be an optimal solution to problem \eqref{primaldual2.2}.
If the subgradients of the objective function $v_h(x) := h(x, c) + \tilde v(\cdot)$ are bounded, i.e.,
$\|v_h'(x)\|_2 \le M_h$ for any $x \in X$,
then there exists $y^*$ s.t.
\begin{equation}\label{boundedness_y}
\|y^*\|\leq \tfrac{M_h}{\sigma_{min}(A)},
\end{equation}
where $\sigma_{min}(A)$ denotes the smallest nonzero singular value  of $A$.
\end{lem}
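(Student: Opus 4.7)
The plan is to combine the first-order (KKT) conditions at the saddle point with a pseudo-inverse bound for $A^T$. Since $(x_*, y_*)$ is a saddle point of the Lagrangian $L(x,y) = v_h(x) + \langle b + Bu - Ax, y\rangle$, the point $x_*$ minimizes $L(\cdot, y_*)$ over $X$. By standard convex subdifferential calculus, this yields the inclusion
\[
A^T y_* \in \partial v_h(x_*) + N_X(x_*),
\]
so we may write $A^T y_* = g + n$ with $g \in \partial v_h(x_*)$ (hence $\|g\|_2 \le M_h$ by hypothesis) and $n \in N_X(x_*)$. I would also record the complementary slackness relation $\langle y_*, A x_* - b - Bu\rangle = 0$ for use below.

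Next I would exploit the freedom in choosing $y_*$ within the dual optimal set. Adding any element of $\ker A^T$ to $y_*$ leaves $A^T y_*$ (and hence $L(\cdot, y_*)$) unchanged, so I would select the minimum-norm dual optimum; this choice automatically lies in $\mathrm{Range}(A) = (\ker A^T)^\perp$. For $y$ in this subspace the singular value decomposition of $A$ gives
\[
\sigma_{\min}(A)\,\|y\|_2 \le \|A^T y\|_2 .
\]

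I would then argue that with this particular choice of $y_*$, the normal-cone component $n$ can be absorbed so that $\|A^T y_*\|_2 \le M_h$. The idea is to use complementary slackness together with the structure of $K_*$: the dual components associated with inactive constraints must vanish, while the active-constraint directions align with the normal cone contribution, allowing one to replace $A^T y_* = g + n$ by an equivalent representation with $n = 0$ without leaving $K_*$ or violating optimality. Combining this with the singular value bound then yields
\[
\|y_*\|_2 \le \frac{\|A^T y_*\|_2}{\sigma_{\min}(A)} \le \frac{M_h}{\sigma_{\min}(A)},
\]
which is the desired estimate \eqref{boundedness_y}.

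The main obstacle is precisely the last step: naively the KKT inclusion only gives $\|A^T y_*\|_2 \le M_h + \|n\|_2$ with no a priori bound on $n$, so I would need to check carefully that the minimum-norm dual optimum in $K_* \cap \mathrm{Range}(A)$ indeed corresponds to $n = 0$ (or at least that any surviving normal-cone contribution is already encoded in $g$ under the bounded-subgradient hypothesis). Everything else is routine convex duality and linear algebra.
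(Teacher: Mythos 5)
Your skeleton matches the paper's: extract $A^T y_*$ from the saddle-point optimality condition, restrict $y_*$ to the orthogonal complement of $\ker A^T$ (the paper does this by assuming WLOG that $y_*$ lies in the column space of $A$, handling $A^Ty_*=0$ as a separate trivial case since $\lambda y_*$ remains dual optimal for all $\lambda\ge 0$), and finish with $\|A^Ty_*\|_2^2 = y_*^T A A^T y_* \ge \sigma_{\min}^2(A)\|y_*\|^2$. The difference, and the gap, is in how you get $\|A^Ty_*\|_2 \le M_h$. You decompose $A^Ty_* = g+n$ with $g\in\partial v_h(x_*)$ and $n\in N_X(x_*)$, and you correctly identify that you have no a priori bound on $n$. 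Your proposed fix does not work as described: the complementary slackness relation $\langle y_*, Ax_*-b-Bu\rangle=0$ and the structure of $K_*$ concern the conic constraint, whereas $n$ lives in the normal cone of the \emph{set} $X$ at $x_*$; these are unrelated objects, so ``active-constraint directions align with the normal cone contribution'' is not a valid mechanism for absorbing $n$. Nor does choosing the minimum-norm dual optimum help, since adding elements of $\ker A^T$ to $y_*$ changes $\|y_*\|$ but leaves $A^Ty_* = g+n$ completely unchanged, so it cannot alter the decomposition.

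The paper closes this step differently and more directly: the saddle-point inequality
\begin{equation*}
h(x_*,c)+\tilde v(x_*) + \langle A^Ty_*, x-x_*\rangle \le h(x,c)+\tilde v(x), \quad \forall x\in X,
\end{equation*}
is \emph{itself} the subgradient inequality for $v_h$ viewed as a function on $X$, so $A^Ty_*$ is declared a subgradient of $v_h$ at $x_*$ and the hypothesis $\|v_h'(x)\|_2\le M_h$ is invoked to conclude $\|A^Ty_*\|_2\le M_h$ with no decomposition into $\partial v_h(x_*)+N_X(x_*)$ at all. In other words, the lemma's hypothesis is being read as a bound on every vector satisfying the subgradient inequality over $X$ (equivalently, on $\partial(v_h+\iota_X)(x_*)$), not merely on some selection from $\partial v_h(x_*)$. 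If you adopt that reading of the hypothesis, your $n$ disappears by definition and your argument closes; if you insist on the reading where only $\partial v_h(x_*)$ is bounded (e.g., $v_h$ is $M_h$-Lipschitz), then the obstacle you flagged is real and is not resolved by either your proposal or, strictly speaking, the paper's own one-line justification. As written, your proof is incomplete at exactly the step you yourself identified.
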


\begin{proof}
We consider two cases. Case 1: $A^Ty^*=0$, i.e., $y_*$ belongs to the null space of $A$. Since for any $\lambda\geq 0$, $\lambda y^*$ is still
an optimal dual solution to problem \eqref{primaldual2.2}, we have \eqref{boundedness_y} holds.\newline
Case 2: $A^T y^* \neq 0$.
By the definition of the saddle point, we have
\[
\langle b + B u - A x^*,  y^*\rangle +  h(x^*,c) + \tilde v(x^*) \le \langle b + B u - A x,  y^*\rangle +  h(x,c) + \tilde v(x), \ \forall x \in X,
\]
which implies
\begin{equation} \label{temp_subgradeint_vh}
h(x^*,c) + \tilde v(x^*) + \langle A^T y^*, x - x^* \rangle \le h(x,c) + \tilde v(x), \ \forall x \in X.
\end{equation}
Hence $A^T y^*$ is a subgradient of $v_h$ at the point $x^*$.
Without loss of generality, we assume that $y^*$ belongs to the column space of $A^T$ (i.e., $y^*$ is perpendicular
to the eigenspace associated with eigenvalue $0$). Otherwise we can show that the projection of $y^*$
onto the column space of $A^T$ will also satisfy \eqnok{temp_subgradeint_vh}. Using this observation, we have
\[
\|A^T y^*\|_2^2 =(y^*)^TAA^Ty^* = (y^*)^TU^T\Lambda Uy^*\geq \sigma_{min}(AA^T)\|Uy^*\|^2 = \sigma_{min}^2(A)\|y^*\|^2,
\]
where $U$ is an orthonormal matrix whose rows consist of the eigenvectors of $A A^T$ and
$\Lambda$ is the diagonal matrix whose elements are the corresponding eigenvalues.
Our result then follows from the above inequality and the assumption that $\|A^T y^*\|_2 \le M_h$.
%Using the fact we proved that $B^Ty_*$ is an $\epsilon$-subgradient of $\tilde v(u)$ and the assumption in \eqref{def_g}, we have
%$$
%\|y_*\|\leq \tfrac{M}{\sigma_{min}(B)}.
%$$
\end{proof}
%Since we have the bound \eqref{boundedness_y} for the optimal dual variable, given an initial dual variable $y_0$, we can provide an upper bound of $\|y_*-y_0\|$ throughout this paper. Note that we can assume the initial dual variable $y_0 = 0$ to simply the complexity bound.

\subsection{Convergence analysis for DSA} \label{sec_con_DSA}
Our goal in this subsection is to establish the complexity of the DSA algorithm for solving problem~\ref{first-stage-cp}.

The basic idea is to apply the results we obtained in the previous section regarding the I-PDSA algorithm to
the three loops stated in the DSA algorithm. More specifically, we will show how to generate
stochastic $\epsilon$-subgradients for the value functions $v^2$ and $v^3$
in the middle and innermost loops, respectively, and how to compute a nearly optimal solution
for problem~\ref{first-stage-cp} in the outer loop of the DSA algorithm .

In order to apply these results to the saddle-point reformulation for
the second and first stage problems (see \eqnok{primaldual2} and \eqnok{primaldual1}),
we need to make sure that the condition in \eqnok{sto_assump1}
holds for the value functions, $v^3$ and $v^2$ respectively, associated with the optimization problems in their subsequent stages.
For this purpose, we assume that the less aggressive algorithmic parameter setting in \eqnok{step2}
is applied to solve the second stage saddle point problems in \eqnok{primaldual2},
while a more aggressive
parameter setting in \eqnok{step1} is used to solve the first stage and last stage saddle point problems in \eqnok{primaldual1}
and \eqnok{primaldual3}, respectively. Moreover, we need the boundedness of the operators $B^2$ and $B^3$:
\begin{equation} \label{boundedB3}
\|B^2\| \le {\cal B}_2  \ \ \mbox{and} \ \ \ \|B^3\| \le {\cal B}_3
\end{equation}
in order to guarantee that the generated stochastic subgradients for the value functions $v^2$ and $v^3$ have bounded variance.

%We start with a stronger assumption about these value functions which can guarantee \eqnok{sto_assump1}
%holds.
For notational convenience, we use $\Omega_i \equiv \Omega_{X^i}$ and $\alpha_i \equiv \alpha_{X^i}$, $i=1, 2, 3$, to denote the
diameter and strongly convex modulus associated with the distance generating function
for the feasible set $X^i$ (see \eqnok{OmegaX}).
%{\color{red} we also the optimal solutions $x^{3,*}$,$x^{2,*}$ of stage 3 and 2 as follows,
%\begin{equation}\label{x_opt}
%\begin{aligned}
%x^{3,*} = & \Argmin_x\bbe_{\xi^3|\bar x^2,\xi^2}[ h^3(x)] \\
%&\ \ \ \ \text{ s.t.}\ \ \ \  A^3x - b^3-B^3\bar x^2 \in K^3
%\end{aligned}
%\begin{aligned}
%x^{2,*} = & \Argmin_x\bbe_{\xi^2|\bar x^1,\xi^1}[ h^2(x)+v^3(x|\xi^2)] \\
%&\ \ \ \ \text{ s.t.}\ \ \ \  A^2x - b^2-B^2\bar x^1 \in K^2
%\end{aligned}
%\end{equation}
%}
Lemma~\ref{lem_analysis_stage3} shows some convergence properties for the innermost loop of the DSA algorithm.
\begin{lem} \label{lem_analysis_stage3}
If the parameters $\{w_k^3\}$, $\{\tau_k^3\}$ and $\{\eta_k^3\}$ are set to \eqref{step1} (with $M = 0$ and $A = A_j^3$) and
\begin{equation} \label{N3}
N_3 \equiv N_{3,j} :=  \tfrac{3\sqrt{2}\|A_j^3\| [2(\Omega_3)^2+\|y^3_{*,j}-y^3_{0}\|^2]}{\sqrt{\alpha_3} \epsilon},
\end{equation}
then $B_j^3 \bar y_j^3$ is a stochastic $(\epsilon/3)$-subgradient of the value function $v^3$ at $x_{j-1}^2$.
Moreover, given random variable $\xi^{[2]}$, there exists a constant $M_3$
such that $\|v^3(x_1, \xi^{[2]}) - v^3 (x_2,\xi^{[2]}) \| \le M_3 \|x_1 - x_2\|, \forall x_1, x_2 \in X^2$ and
\begin{equation} \label{def_M3}
\bbe[\|B_j^3 \bar y_j^3\|_*^2|\xi^{[2]}] \le M_3^2.
\end{equation}
In addition, there exists a vector $\delta \in \bbr^{m^3}$ s.t.
\begin{equation} \label{bnd_gap_stage3}
  \begin{array}{l}
%  \bbe[h^3(\bar x^3, c^3) - h^3(x^{3,*}, c^3)  ] \le \epsilon/3,\\
 \bbe[h^3(\bar x^3, c^3) - V^3(\bar x^2, \xi^{[3]})  | \xi^{[2]}] \le \epsilon/3,\\
   A^3 \bar x^3 - B^3\bar x^2 - b^3 - \delta \in K^3 \ {\rm a.s.},\\
   \bbe[\|\delta\| | \xi^{[2]}] \le \epsilon/3.
\end{array}
\end{equation}
%where $x^{3,*}$ is defined in \eqref{x_opt}.
\end{lem}

\begin{proof}
The innermost loop of the DSA algorithm is equivalent to the application of Algorithm~2 to
the last stage saddle point problem in~\eqnok{primaldual3}.
Note that for this problem, we do not have any subsequent stages and hence $\tilde v = 0$. In other words,
the subgradients of $\tilde v$ are exact. In view of Corollary~\ref{sto_coro 1} (with $M = 0$ and $\bar \epsilon = 0$),
the definition of $N_3$
in \eqnok{N3} and conditional on $\xi^{[2]}$, we have
\[
\bbe[{\rm gap}_*(\bar z_j^3)|\xi^{[2]}]  \le \tfrac{\sqrt{2}\|A_j^3\| [2(\Omega_3)^2+\|y^3_*-y^3_0\|^2]}{\sqrt{\alpha_3} N_3} \le \tfrac{\epsilon}{3}.
\]
This observation, in view of Lemma~\ref{epsi_subg}, then implies that
$B_j^3 \bar y_j^3$ is a stochastic $(\epsilon/3)$-subgradient of $v^3$ at $x_{j-1}^2$.
By the Lipschitz continuity of $v^3$, the Lipschitz constant $M_3$ should satisfy
\begin{equation}\label{M3_1}
M_3 \ge \bbe[\|B_j^3y_{*,j}^3\||\xi^{[2]}],\ \forall y_{*,j}^3\in Y_*^3,
\end{equation}
where $Y_*^3$ denotes the set of optimal dual solutions of problem \eqref{primaldual3}.
Moreover, it follows from \eqnok{g_bound2} (with $M = 0$ and ${\bar \epsilon}=0$) that
\[
\begin{array}{l}
\bbe[\|y_{*,j}^3 - \bar y_{j}^3\|^2|\xi^{[2]}]  \le \bbe[\|y_{*,j}^3 - y_0^3\|^2 |\xi^{[2]}] + 4 (\Omega_3)^2, \\
\bbe [\|\bar y_{j}^3\|^2|\xi^{[2]}]  \le 2 \bbe[\|y_{*,j}^3 \| +\|y_{*,j}^3 - y_0^3\|^2|\xi^{[2]}]+8\Omega_3^2.
\end{array}
\]
This inequality, in view of \eqref{boundedB3}, implies that
\begin{equation}\label{M3_2}
\bbe[\|B_j^3 \bar y_j^3\|_*^2|\xi^{[2]}] \le {\cal B}_3^2 \bbe [(2\|y_{*,j}^3 \| +2\|y_{*,j}^3 - y_0^3\|^2 +8\Omega_3^2) |\xi^{[2]}].
\end{equation}
Hence, combining \eqref{def_M3}, \eqref{M3_1} and \eqref{M3_2}, we can see that the latter part of our result holds with % \eqref{M3_cons}.
\[
M_3 = \max \left\{ \max_{y\in Y_*^3} \bbe[\|B_j^3y\||\xi^{[2]}], {\cal B}_3 \sqrt{\bbe[(2\|y_{*,j}^3 \| + 2\|y_{*,j}^3 - y_0^3\|^2+8\Omega_3^2) |\xi^{[2]}] } \right\}.
\]
The results in \eqnok{bnd_gap_stage3} directly follow from Lemma~\ref{lem_cone_prog}.
In view of Corollary~\ref{sto_coro 1} (with $M = 0$ and $\bar \epsilon = 0$)
and the definition of $N_3$
in \eqnok{N3}, we conclude that there exist $\delta \in  \bbr^{m^1}$ s.t.
\begin{align*}
  \bbe_{\xi^2}[\|\delta\|] &\leq \tfrac{2\sqrt{2\alpha_3}\|A^3\| \|y_*^3 - y_0^3\| + 4 \Omega_3 \|A^3\|}{\alpha_3 N_3}
 \le \epsilon/3,
\end{align*}
which together with Lemma~\ref{lem_cone_prog} then imply our result.
%{\color{red}Letting $x = x^{3,*}$ and $y = 0$ in the definition of \eqnok{def_gap_p}, we can easily see that
%\[
%h^3(\bar x^3, c^3) - h^3(x^{3,*}, c^3) \le {\rm gap}_\delta(\bar z).
%\]
%Moreover, in view of \eqnok{goal_Q} and \eqnok{def_gap_p}, we must have
%$A \bar x  - Bu  -b - \delta \in K$ almost surely. Otherwise, $\bbe[{\rm gap}_\delta(\bar z)]$
%would be unbounded as $y$ runs throughout $K^*$ in the definition of ${\rm gap}_\delta(\bar z)$.}
\end{proof}

\vgap

Lemma~\ref{lem_analysis_stage2} describes some convergence properties for the middle loop of the DSA algorithm.
\begin{lem} \label{lem_analysis_stage2}
Assume that the parameters for the innermost loop are set according to Lemma~\ref{lem_analysis_stage3}.
If the parameters $\{w_j^2\}$, $\{\tau_j^2\}$ and $\{\eta_j^2\}$ for the middle loop are set to \eqref{step2} (with $M = M_3$ and $A = A_i^2$) and
\begin{equation} \label{N2}
N_2 \equiv N_{2,i} :=  \left(\tfrac{12 \sqrt{2} \|A^2_i\| \Omega_2}{\sqrt{\alpha_2} \epsilon} \right)^\frac{2}{3}
     + \left[\tfrac{6\left(\|A^2_i\| \|y_{*,i}^2 - y_0^2\|^2 + 4 \sqrt{3} M_3 \Omega_2\right)}{\sqrt{\alpha_2} \epsilon}\right]^2,
\end{equation}
then $B_i^2 \bar y_i^2$ is a stochastic $(2\epsilon/3)$-subgradient of the value function $v^2$ at $x_{i-1}^1$.
Moreover, there exists a constant $M_2$
such that $\|v^2(x_1) - v^2 (x_2) \| \le M_2 \|x_1 - x_2\|, \forall x_1, x_2 \in X^2$ and
\begin{equation} \label{def_M2}
\bbe[\|B_i^2 \bar y_i^2\|_*^2|\xi^{[1]}] \le M_2^2,
\end{equation}
In addition, there exists a vector $\delta \in \bbr^{m^2}$ s.t.
\[
  \begin{array}{l}
  \bbe[h^2(\bar x^2, c^2) + v^3(\bar x^2|\xi^2) - V^2(\bar x^1, \xi^{[2]}) ] \le 2\epsilon/3,\\
   A^2 \bar x^2 - B^2\bar x^1 - b^2 - \delta \in K^2 \ {\rm a.s.}, \\
    \bbe[\|\delta\| | \xi^{[2]}] \le 2\epsilon/3.
\end{array}
\]
\end{lem}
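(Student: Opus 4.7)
The plan is to mimic the proof of Lemma~\ref{lem_analysis_stage3}, now viewing the middle loop of the DSA algorithm as an instance of Algorithm~2 applied to the second-stage saddle point problem~\eqref{primaldual2} with $\tilde v = v^3$, $u = x_{i-1}^1$ and $\zeta = \xi_i^2$. The stochastic first-order oracle for $\tilde v = v^3$ is supplied by the innermost loop: by Lemma~\ref{lem_analysis_stage3}, $G_{j-1} := (B_j^3)^T \bar y_j^3$ is, conditionally on $x_{j-1}^2$, an unbiased estimator of an $(\epsilon/3)$-subgradient of $v^3$ at $x_{j-1}^2$, with $\bbe[\|G_{j-1}\|_*^2] \le M_3^2$ (after absorbing $\|B^3\| \le {\cal B}_3$ into $M_3$). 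Thus the assumptions for Algorithm~2, namely \eqnok{unbiased_G} and \eqnok{sto_assump1}, hold with $M = M_3$ and ${\bar \epsilon} = \epsilon/3$.

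Since the middle loop uses the less aggressive parameter setting \eqref{step2} (with $M = M_3$ and $A = A_i^2$), I would apply Corollary~\ref{cor_bnd_dual} directly. Substituting ${\bar \epsilon} = \epsilon/3$ into \eqref{sto_cor2_a} yields
\[
\bbe[{\rm gap}_*(\bar z_i^2)] \le \tfrac{2 \sqrt{2} \|A_i^2\| \Omega_2^2}{N_2 \sqrt{\alpha_2 N_2}}
     + \tfrac{\|A_i^2\| \|y_{*,i}^2 - y_0^2\|^2 + 4 \sqrt{3} M_3 \Omega_2}{\sqrt{\alpha_2 N_2}} + \tfrac{\epsilon}{3}.
\]
The specific choice of $N_2$ in \eqref{N2} is engineered precisely so that each of the first two terms is at most $\epsilon/6$, so that $\bbe[{\rm gap}_*(\bar z_i^2)] \le 2\epsilon/3$. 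By Lemma~\ref{epsi_subg}, this directly implies that $(B_i^2)^T \bar y_i^2$ is a stochastic $(2\epsilon/3)$-subgradient of $v^2$ at $x_{i-1}^1$, establishing the first assertion.

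For the second-moment bound, I would invoke \eqref{sto_cor2_d} from Corollary~\ref{cor_bnd_dual}, which gives
\[
\bbe[\|y_{*,i}^2 - \bar y_i^2\|^2] \le \|y_{*,i}^2 - y_0^2\|^2 + \tfrac{2\Omega_2^2}{N_2}
+ \tfrac{\sqrt{6}(1+\alpha_2) M_3 \Omega_2}{\|A_i^2\|} + \tfrac{\sqrt{\alpha_2 N_2}\,\epsilon}{3\sqrt{2}\|A_i^2\|}.
\]
The crucial point is that $N_2 = \mathcal{O}(1/\epsilon^2)$, so the final term remains bounded by a constant independent of $\epsilon$; this is exactly the reason we used the parameter setting \eqref{step2} rather than \eqref{step1} at this stage. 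Bounding $\|y_{*,i}^2\|$ via Lemma~\ref{lemma_dual_bnd} (applied to the second-stage saddle point problem, whose primal objective $h^2 + v^3$ is Lipschitz with constant controlled by the Lipschitz moduli of $h^2$ and $v^3$), combining via the triangle inequality, and finally multiplying by $\|B_i^2\|^2 \le {\cal B}_2^2$, yields $\bbe[\|(B_i^2)^T \bar y_i^2\|_*^2] \le M_2^2$ for some finite constant $M_2$.

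The Lipschitz continuity of $v^2$ itself follows from a standard envelope-theorem argument: any exact subgradient of $v^2$ at a point in $X^2$ has the form $(B^2)^T y$ for a suitable optimal dual multiplier $y$, whose norm is uniformly bounded by Lemma~\ref{lemma_dual_bnd}, so \eqnok{bnd_subgradient} delivers Lipschitz continuity with a constant that can be absorbed into $M_2$. The main obstacle, and the step that most needs care, is verifying that the last term of \eqref{sto_cor2_d} stays bounded uniformly in $\epsilon$ under the chosen $N_2$, and that the constants $M_2$ and the Lipschitz modulus can be chosen simultaneously and independently of the random realization $\xi_i^2$; this in turn requires knowing a deterministic upper bound on $\|y_{*,i}^2\|$, which is guaranteed because $h^2$ and $v^3$ are Lipschitz with moduli not depending on $\xi_i^2$.
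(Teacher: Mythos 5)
Your proposal is correct and follows essentially the same route as the paper's proof: identify the middle loop with Algorithm~2 applied to \eqref{primaldual2} with $\tilde v = v^3$ and oracle tolerance $\bar\epsilon=\epsilon/3$ from Lemma~\ref{lem_analysis_stage3}, invoke Corollary~\ref{cor_bnd_dual} with $M=M_3$ and the choice of $N_2$ to get $\bbe[{\rm gap}_*(\bar z_i^2)]\le 2\epsilon/3$, conclude via Lemma~\ref{epsi_subg}, and use \eqref{sto_cor2_d} together with $\|B^2\|\le{\cal B}_2$ for the second-moment bound. Your added remarks on bounding $\|y_{*,i}^2\|$ via Lemma~\ref{lemma_dual_bnd} and on the Lipschitz continuity of $v^2$ only make explicit what the paper compresses into ``the assumption that $y_{*,i}^2$ is well-defined.''
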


\begin{proof}
The middle loop of the DSA algorithm is equivalent to the application of Algorithm~2 to
the second stage saddle point problem in~\eqnok{primaldual2}.
Note that for this problem, we have $\tilde v = v^3$. Moreover, by Lemma~\ref{lem_analysis_stage3},
the stochastic subgradients of $v^3$ are computed by the
innermost loop with tolerance $\bar \epsilon = \epsilon/3$. In view of Corollary~\ref{cor_bnd_dual} (with $M = M_3$ and $\bar \epsilon = \epsilon/3$)
and the definition of $N_2$
in \eqnok{N2}, we have
\[
\bbe[{\rm gap}_*(\bar z_i^2)|\xi^{[1]}]  \le \tfrac{2 \sqrt{2} \|A^2_i\| \Omega_2}{N_2 \sqrt{\alpha_2 N_2}}
     + \tfrac{\|A^2_i\| \|y_{*,i}^2 - y_0^2\|^2 + 4 \sqrt{3} M_3 \Omega_2}{\sqrt{\alpha_2 N_2}} +{\bar \epsilon}\le \tfrac{2\epsilon}{3}.
\]
This observation, in view of Lemma~\ref{epsi_subg}, then implies that
$B_i^2 \bar y_i^2$ is a stochastic $(2\epsilon/3)$-subgradient $v^2$ at $x_{i-1}^1$.
By the Lipschitz continuity of $v^2$, the Lipschitz constant $M_2$ should satisfy
\begin{equation}\label{M2_1}
M_2 \ge \bbe[\|B_i^2y_{*,i}^2\| | \xi^{[1]}],\ \forall y_{*,i}^2\in Y_*^2,
\end{equation}
where $Y_*^2$ denotes the set of optimal dual solutions of problem \eqref{primaldual2}.
Moreover, it follows from \eqnok{sto_cor2_d} (with $M = M_3$ and ${\bar \epsilon}=\epsilon/3$) that
\[
\begin{array}{l}
\bbe[\|y_{*,i}^2 - \bar y_i^2\|^2| \xi^{[1]} ]\le \bbe[ \|y_{*,i}^2 - y_0^2\|^2 + \tfrac{2 \Omega_2^2}{N_2} + \tfrac{\sqrt{6} (1+\alpha_2) M_3 \Omega_2}{\|A^2_i\|}
+ \tfrac{\sqrt{\alpha_2 N_2} { \epsilon}}{3\sqrt{2} \|A^2_i\|}|\xi^{[1]}]],\\
\bbe[\| \bar y_i^2\|^2| \xi^{[1]}]\le \bbe[2\|y_{*,i}^2\|^2 + 2\|y_{*,i}^2 - y_0^2\|^2 + \tfrac{4 \Omega_2^2}{N_2} + \tfrac{2\sqrt{6} (1+\alpha_2) M_3 \Omega_2}{\|A^2_i\|}+ \tfrac{\sqrt{2\alpha_2 N_2} { \epsilon}}{3 \|A^2_i\|}| \xi^{[1]}].
\end{array}
\]
This inequality, in view of \eqref{boundedB3}, implies that
\begin{equation}\label{M2_2}
\bbe[\|B_i^2 \bar y_i^2\|_*^2| \xi^{[1]}] \le {\cal B}_2^2 \bbe\left[2\|y_{*,i}^2\|^2 + 2\|y_{*,i}^2 - y_0^2\|^2 + \tfrac{4 \Omega_2^2}{N_2} + \tfrac{2\sqrt{6} (1+\alpha_2) M_3 \Omega_2}{\|A^2_i\|}+ \tfrac{\sqrt{2\alpha_2 N_2} { \epsilon}}{3 \|A^2_i\|}| \xi^{[1]}\right],
\end{equation}
where $N_2$ is defined in \eqnok{N2}.
Hence, combining these observations, we can see that the latter part of our results holds with
$M_2$ satisfying both \eqnok{M2_1} and
\[
M_2 \ge {\cal B}_2 \left\{\bbe\left[2\|y_{*,i}^2\|^2 + 2\|y_{*,i}^2 - y_0^2\|^2 + \tfrac{4 \Omega_2^2}{N_2} + \tfrac{2\sqrt{6} (1+\alpha_2) M_3 \Omega_2}{\|A^2_i\|}+ \tfrac{\sqrt{2\alpha_2 N_2} { \epsilon}}{3 \|A^2_i\|}| \xi^{[1]}\right]\right\}^\frac{1}{2}.
\]
In view of Corollary~\ref{sto_coro 1} (with $M = M_3$ and $\bar \epsilon = \epsilon/3$)
and the definition of $N_2$
in \eqnok{N2}, we conclude that there exist $\delta \in  \bbr^{m^1}$ s.t.
\begin{align*}
  \bbe_{\xi^2}[\|\delta\|] &\leq \tfrac{2\sqrt{2\alpha_2}\|A^2\| \|y_*^2 - y_0^2\| + 4 \Omega_2 \|A^2\|}{\alpha_2 N_2}
+ \tfrac{2M_2 (\sqrt{6} \|A^2\| + \sqrt{3\alpha_2}) }{\alpha_2 \sqrt{N_2}} + \sqrt{\tfrac{2 \|A^2\|  \epsilon}{N_2 \sqrt{\alpha_2}}} \le 2\epsilon/3,
\end{align*}
which together with Lemma~\ref{lem_cone_prog} then imply our result.
\end{proof}

%We call a point $\bar z^1 := (\bar x^1,\bar y^1) \in X^1 \times K_*^1$ an $\epsilon$-solution to problem
%of problem~\ref{first-stage-cp} if
%$$\bbe[ {\rm gap}_\delta(\bar z, X^1, K_*^1)] \leq \epsilon.$$
%
%In this subsection, we will discuss the sampling complexity for the algorithm. Recall the value function $v_{t}(x^{t-1})$ defined in \eqref{Defi_sto_V1} and \eqref{Defi_sto_V}, we denote $g_{t}(x^{t-1})$ as the $\epsilon^t$-subgradient of the value function $v_{t}(x^{t-1})$ for $t=2,\ldots,T$, i.e.,
%$$ g_{t}(x^{t-1}) \in \partial_{\epsilon^t} v_{t}(x^{t-1}).$$
%Notice for our problem \eqref{3-stage prob}, $T=3$. And below we define a criteria to terminate the DSA when applied to the problem \eqref{3-stage prob}.
%\begin{definition}
%A solution $\bar z^1 := (\bar x^1,\bar y^1)$ is called an $\epsilon$-solution to problem \eqref{3-stage prob} if and only if
%$$\max_{(x,y)\in X^1\times K_*^1}Q(\bar z^1,x,y)+\langle y,\delta\rangle \leq \epsilon.$$
%\end{definition}
%In order to obtain an $\epsilon$-solution to problem \eqref{3-stage prob}, we need the bound of $\max_{x\in X^1}Q(\bar z^1; x,y_*)$, denoted as $\epsilon^1$, less than or equal to $\epsilon$, i.e., $\epsilon^1\leq \epsilon$.
\vgap

We are now ready to establish the main convergence properties of the DSA algorithm applied to a three-stage
stochastic optimization problem.
\begin{thm}\label{sto_bound_all}
Suppose that the parameters for the innermost and middle loop in the DSA algorithm are set according to Lemma~\ref{lem_analysis_stage3}
and Lemma~\ref{lem_analysis_stage2}, respectively. If
the parameters $\{w_i\}$, $\{\tau_i\}$ and $\{\eta_i\}$ for the outer loop are set to \eqnok{step1} (with $M = M_2$ and $A = A^1$) and
\begin{equation} \label{N1}
\begin{array}{l}
N_1 := \max \left\{ \tfrac{6\sqrt{2}\|A^1\| [2(\Omega_1)^2+\|y_0^1\|^2]}{\sqrt{\alpha_1} \epsilon}+ \left(\tfrac{24\sqrt{3} M_2\Omega_1}{\sqrt{\alpha_1} \epsilon}\right)^2, \right.\\
\quad \quad \left. \tfrac{6\|A^1\| (\sqrt{2\alpha_1} \|y_*^1 - y_0^1\| + 2 \Omega_1 + 3 \sqrt{\alpha_1})}{\alpha_1 \epsilon} + \left( \tfrac{6 \sqrt{3} M_2 (\sqrt{2} \|A^1\| + \sqrt{\alpha_1}) }{\alpha_1 \epsilon}\right)^2
%\tfrac{18 \|A^1\|  }{ \sqrt{\alpha_1} \epsilon}
\right\},
\end{array}
\end{equation}
then we will find a solution $\bar x^1 \in X^1$ and a vector $\delta \in \bbr^{m^1}$ s.t.
\[
  \begin{array}{l}
  \bbe[h(\bar x^1, c) + v^2(\bar x^1,\xi^1) - (h(x^*, c) + v^2(x^*,\xi^1))]  \le \epsilon,\\
   A \bar x^1 - b - \delta \in K^1, a.s., \\
 \bbe[\|\delta\|] \leq \epsilon,
\end{array}
\]
where $x^*$ denotes the optimal solution of problem~\ref{first-stage-cp}.
\end{thm}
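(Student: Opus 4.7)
The plan is to recognize the outer loop of the DSA algorithm as a direct instance of Algorithm~2 applied to the saddle-point reformulation \eqref{primaldual1} of the first-stage problem, with the value function in the subsequent stage being $\tilde v = v^2$. Lemma~\ref{lem_analysis_stage2} already establishes two key facts about the middle loop: (i) at each outer iteration $i$, the vector $(B_i^2)^T \bar y_i^2$ is a stochastic $(2\epsilon/3)$-subgradient of $v^2$ at $x_{i-1}^1$, which since $\xi_i^2$ is drawn independently of the history up through $x_{i-1}^1$ verifies the unbiasedness/independence hypothesis \eqref{unbiased_G} of Algorithm~2, and (ii) the second moment $\bbe[\|(B_i^2)^T\bar y_i^2\|_*^2]$ is uniformly bounded by $M_2^2$, which is precisely the condition \eqref{sto_assump1} needed to invoke the convergence theory of Section~2.4. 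Hence all hypotheses of Corollary~\ref{sto_coro 1} hold with $A=A^1$, $M=M_2$, and ${\bar\epsilon}=2\epsilon/3$.

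With these identifications in place, the next step is to apply the bounds \eqref{g_bound} and \eqref{g_bound1} from Corollary~\ref{sto_coro 1}. For the perturbed gap, \eqref{g_bound} gives
\[
\bbe[{\rm gap}_\delta(\bar z^1_{N_1})] \le \tfrac{\sqrt{2}\|A^1\|(2\Omega_1^2+\|y_0^1\|^2)}{\sqrt{\alpha_1}N_1} + \tfrac{4\sqrt{3}M_2\Omega_1}{\sqrt{\alpha_1 N_1}} + \tfrac{2\epsilon}{3},
\]
and I will force each of the first two terms to be at most $\epsilon/6$, yielding the requirements $N_1 \ge 6\sqrt{2}\|A^1\|(2\Omega_1^2+\|y_0^1\|^2)/(\sqrt{\alpha_1}\epsilon)$ and $N_1 \ge (24\sqrt{3}M_2\Omega_1/(\sqrt{\alpha_1}\epsilon))^2$; these two requirements combine into the first argument of the maximum in \eqref{N1}. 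For the infeasibility, \eqref{g_bound1} contributes three terms; splitting the target $\bbe[\|\delta\|]\le\epsilon$ as $\epsilon/3$ per term and substituting ${\bar\epsilon}=2\epsilon/3$ into $\sqrt{3\|A^1\|{\bar\epsilon}/(N_1\sqrt{\alpha_1})}$ produces, after straightforward algebra, the second argument of the maximum in \eqref{N1} (the $3\sqrt{\alpha_1}$ summand captures exactly the contribution from the ${\bar\epsilon}$-dependent term). Thus the prescribed $N_1$ guarantees both $\bbe[{\rm gap}_\delta(\bar z^1_{N_1})]\le\epsilon$ and $\bbe[\|\delta\|]\le\epsilon$.

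Finally, I would invoke Lemma~\ref{lem_cone_prog} with $h=h^1$, $\tilde v = v^2$, and the saddle-point data of \eqref{primaldual1}: the bound on $\bbe[{\rm gap}_\delta(\bar z^1_{N_1})]$ immediately translates into the functional optimality gap $\bbe[h^1(\bar x^1,c^1)+v^2(\bar x^1)-h^1(x^*,c^1)-v^2(x^*)]\le\epsilon$, while the finiteness of $\bbe[{\rm gap}_\delta]$ forces $A^1\bar x^1 - b^1 - \delta \in K^1$ almost surely (otherwise the supremum over $y\in K_*^1$ in \eqref{def_gap_p} would diverge). Combined with the already-established $\bbe[\|\delta\|]\le\epsilon$, this yields the three conclusions of the theorem.

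The main obstacle is bookkeeping: the three requirements on $N_1$ must be matched term-by-term against \eqref{N1}, and one must carefully verify that the parameter choice \eqref{step1} in the outer loop is compatible with the requirement $w_1\eta_1=w_N\eta_N$ needed by part (b) of Theorem~\ref{sto_bounded_thm1} (it is, since $w_k\equiv 1$ and $\eta_k\equiv\eta$). A secondary subtlety, already absorbed in Lemma~\ref{lem_analysis_stage2}, is that the $(2\epsilon/3)$-subgradient error from the middle loop appears as the ${\bar\epsilon}$ parameter in the outer loop analysis and must be set in a balanced fashion so that it, together with the two optimization error terms, sums to at most $\epsilon$; allocating $2\epsilon/3$ to the inner bias and $\epsilon/3$ to the optimization residuals achieves this balance.
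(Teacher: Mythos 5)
Your proposal is correct and follows essentially the same route as the paper's own proof: identify the outer loop with Algorithm~2 applied to \eqref{primaldual1} with $\tilde v = v^2$, feed in the $\bar\epsilon = 2\epsilon/3$ and $M = M_2$ guarantees from Lemma~\ref{lem_analysis_stage2}, invoke Corollary~\ref{sto_coro 1} to bound $\bbe[{\rm gap}_\delta]$ and $\bbe[\|\delta\|]$, and finish with Lemma~\ref{lem_cone_prog}. The only difference is that you make explicit the term-by-term bookkeeping ($\epsilon/6$ and $\epsilon/3$ allocations) that the paper leaves implicit in the definition of $N_1$, and your accounting does reproduce \eqref{N1} exactly.
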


\begin{proof}
The outer loop of the DSA algorithm is equivalent to the application of Algorithm~2 to
the first stage saddle point problem in~\eqnok{primaldual1}.
Note that for this problem, we have $\tilde v = v^2$. Moreover, by Lemma~\ref{lem_analysis_stage2},
the stochastic subgradients of $v^2$ are computed by the
middle loop with tolerance $\bar \epsilon = 2\epsilon/3$. In view of Corollary~\ref{sto_coro 1} (with $M = M_2$ and $\bar \epsilon = 2\epsilon/3$)
and the definition of $N_1$
in \eqnok{N1}, we conclude that there exist $\delta \in  \bbr^{m^1}$ s.t.
\begin{align*}
\bbe_{\xi^2}[{\rm gap}_\delta(\bar z_N^1)] &\leq  \tfrac{\sqrt{2}\|A^1\| (2\Omega_1^2+\|y_0^1\|^2)}{\sqrt{\alpha_1} N_1}+\tfrac{4\sqrt{3} M_2\Omega_1}{\sqrt{\alpha_1 N_1}} +
\tfrac{2 \epsilon}{3} \le \epsilon,\\
  \bbe_{\xi^2}[\|\delta\|] &\leq \tfrac{2\sqrt{2\alpha_1}\|A^1\| \|y_*^1 - y_0^1\| + 4 \Omega_1 \|A^1\|}{\alpha_1 N_1}
+ \tfrac{2M_2 (\sqrt{6} \|A^1\| + \sqrt{3\alpha_1}) }{\alpha_1 \sqrt{N_1}} + \sqrt{\tfrac{2 \|A^1\|  \epsilon}{N_1 \sqrt{\alpha_1}}} \le \epsilon,
\end{align*}
which together with Lemma~\ref{lem_cone_prog} then imply our result.
\end{proof}

\vgap

We now add a few remarks about the convergence of the DSA algorithm.
Firstly, in view of \eqref{N2} and \eqnok{N3}, $N_2$ and $N_3$ are random variables since they depend on the random variables $\xi^{[2]}$ and $\xi^{[3]}$,
respectively.  The selection of $N_2$ and $N_3$ allows us to remove the boundedness assumptions for a few random variables such as $A_i^2$ and $A_j^3$.
%For the sake of simplicity, under the assumption of stagewise independence, we will have an upper bound for $N_2$ in expectation with respect to $\xi^2 $.
Secondly,  if the random variables appearing in the definition of $N_2$, i.e., $A_i^2$ and $y_{*,i}$, are bounded, we can see from Lemma~\ref{lem_analysis_stage2} and Theorem~\ref{sto_bound_all}
that  the number of random samples $\xi^2$ and $\xi^3$ are given by
\begin{equation} \label{def_samp_complex}
N_1 = \mathcal{O}(1/\epsilon^2) \ \ \mbox{and} \ \ \ N_1\times N_2 = \mathcal{O}(1/\epsilon^4),
\end{equation}
respectively.
It is also possible to obtain an upper bound for $N_2$ and $N_1 \times N_2$ in expectation with respect to $\xi^2$ without assuming the boundedness of  $A_i^2$ and $y_{*,i}$.
Thirdly, it appears that the convergence of the DSA algorithm relies on $y_*^1$, $y_{*,i}^2$, and $y_{*,j}^3$. However, the size of these dual variable can be estimated
by using Lemma~\ref{lemma_dual_bnd}.
and
possibly some tools from random matrix theory~\cite{Rudelson10non-asymptotictheory} to estimate the smallest singular values in case these quantities are not easily computable.

It should be noted that our analysis of DSA focuses on the optimality of the first-stage decisions, and
the decisions we generated for the later stages are mainly used for computing the approximate stochastic
subgradients for the values functions at each stage.
Except for the first stage decision $\bar x^1$,
the performance guarantees (e.g., feasibility and optimality)
 that we can provide for later stages (see Lemma~\ref{lem_analysis_stage3} and~\ref{lem_analysis_stage2}) are
dependent on the sequences of random variables (or scenarios) we generated.
We do not generate
history-dependent policy or suggest a prefixed sequence of decisions for general multi-stage stochastic optimization problems.
However, in some cases such prefixed sequence can still be extracted
from the output of the algorithm. In particular, if one can separate the state and control variables,
then we can use the obtained solutions for the initial state variable and the ones for the control variables in later stages
as a prefixed control policy (see Section~\ref{sec_num}
for an example in portfolio optimization). In general, one possible way to guarantee the feasibility and optimality of the decisions in the later stages
would be to re-run the DSA algorithm in each stage.
More specifically, at the beginning of each stage, we already know the
realization of the random variable at this stage and the decisions from
the previous stage, we can run the DSA algorithm now for a
smaller multi-stage stochastic optimization problem, i.e.,
the number of stages will decrease by $1$ every time we run the algorithm.
One can see that the computational cost for these subsequent runs of the DSA algorithm
will decrease exponentially with respect to the remaining number of stages.
Therefore, the total amount of computational cost
over all these subsequent runs will be in the same order of magnitude
as that for the first run of the DSA method.

%This analysis can be easily extended to the T-stage problems with the conclusion that the total number of scenarios needed to solve the problem with a certain $\epsilon$ error grows exponentially with the increase of the number of stages T.

%\subsection{Towards a more implementable algorithm}
%\subsection{Boundedness of dual variables}
%It turns out that the convergence of the DSA algorithm, as discussed in Subsection~\ref{sec_con_DSA} relies on
%the dual variables $y_*^1$, $y_{*,i}^2$ and $y^3_{*,j}$.

\section{Three-stage problems with strongly convex objectives} \label{sec_DSA_sc}
In this section, we show that the complexity of the DSA algorithm can be significantly improved
if the objective functions $h^i$, $i = 1, 2, 3$, are strongly convex.
We will first refine the convergence properties of Algorithm~2 under the strong convexity assumption
about $h(x, c)$ and then use these results to improve the complexity results of the DSA algorithm.

\subsection{Basic tools: inexact primal-dual stochastic approximation under strong convexity}
Our goal in this subsection is to study the convergence properties of Algorithm~2 applied to problem \eqref{primaldual2.2}
under the assumption that $h(x, c)$ is strongly convex, i.e., $\exists \mu_h > 0$ s.t.
\begin{equation} \label{equ_strong0}
h(x_1, c) - h(x_2, c) - \langle h'(x_2, c), x_1 - x_2 \rangle \ge \mu_h P_X(x_2, x_1), \ \forall x_1, x_2 \in X.
\end{equation}
% discuss the sampling complexity properties of the iterative SPDTs to problem \eqref{primaldual2.2} under strongly convex assumptions on either function $h$.
%\begin{assumption}\label{sto_Stro_conv 1}
%$h$ is strongly convex with module $\mu_h$.
%\end{assumption}

Proposition \ref{sto_prop_strong} below shows the relation
between $(x_{k-1},y_{k-1})$ and $(x_k,y_k)$ after running one step of SPDT when the assumption about $h$ in \eqnok{equ_strong0}
is satisfied.

\begin{prop} \label{sto_prop_strong}
Let $Q$ and $\Delta_k$  be defined in \eqnok{goal_Q} and \eqnok{delta1}, respectively.
For any $1\leq k\leq N$ and $(x,y) \in X \times K_*$, we have
\begin{equation}\label{sto_sc_Q2}
\begin{aligned}
&Q(z_k,z) +\langle A(x_k-x), y_k- y_{k-1} \rangle - \theta_k\langle A(x_{k-1}-x), y_{k-1}- y_{k-2} \rangle\\
 \leq&\ \tau_kP_X(x_{k-1},x)-(\tau_k+\mu_h)P_X(x_k,x)+\tfrac{\eta_k}{2} [\|y_{k-1}-y\|^2-\|y_k-y\|^2] \\
	& -\tfrac{\alpha_X \tau_k}{2}\|x_k-x_{k-1}\|^2- \tfrac{  \eta_k}{2} \|y_{k-1}-y_k\|^2 +\bar \epsilon +(M+\|G_{k-1}\|_*)\|x_k-x_{k-1}\|\\
&+\theta_k\langle A(x_k-x_{k-1}),y_{k-1}-y_{k-2}\rangle+\langle \Delta_{k-1}, x_{k-1}-x\rangle,
\end{aligned}
\end{equation}
\end{prop}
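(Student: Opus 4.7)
The plan is to mirror the proof of Proposition \ref{sto_Prop 1} line by line, noting that the only place the structural hypothesis on $h$ enters is the optimality condition of the primal prox-mapping \eqref{def_primal_proj}, and leveraging \eqref{equ_strong1} there to sharpen that single inequality by an additive $\mu_h P_X(x_k,x)$ term.

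First, I would reproduce verbatim the derivation of the $\tilde v$ inequality
\[
\tilde v(x_k) - \tilde v(x) \le \langle G_{k-1}, x_k - x\rangle + \langle \Delta_{k-1}, x_{k-1}-x\rangle + (M+\|G_{k-1}\|_*)\|x_k-x_{k-1}\| + \bar\epsilon,
\]
since it relies only on the Lipschitz continuity of $\tilde v$, the definition of the $\bar\epsilon$-subgradient and the decomposition \eqref{delta1}, none of which involves $h$.

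The main (and essentially only) substantive modification is in the prox-mapping step. Instead of the three-point inequality \eqref{sto_x_opt}, I would invoke the strongly convex version: since $h(\cdot,c)$ satisfies \eqref{equ_strong1} with modulus $\mu_h$ with respect to $P_X$, the optimality condition of \eqref{def_primal_proj} at $x_k$ yields
\[
\langle -A(x_k-x),\tilde y_k\rangle + h(x_k,c) - h(x,c) + \langle G_{k-1},x_k-x\rangle \le \tau_k P_X(x_{k-1},x) - (\tau_k+\mu_h) P_X(x_k,x) - \tau_k P_X(x_{k-1},x_k),
\]
for all $x\in X$. This is the standard three-point lemma for a composite prox-subproblem whose smooth part is $\mu_h$-strongly convex relative to $P_X$; the extra $-\mu_h P_X(x_k,x)$ is exactly where the strong-convexity gain is deposited. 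The dual step \eqref{sto_y_opt} is unchanged, since \eqref{def_dual_proj} does not involve $h$.

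With these two ingredients in hand, I would then follow the remainder of the proof of Proposition \ref{sto_Prop 1} mechanically: substitute into the definition of $Q$ in \eqref{goal_Q}, expand $\langle A(x_k-x), y_k-\tilde y_k\rangle$ via the dual extrapolation identity $\tilde y_k = \theta_k(y_{k-1}-y_{k-2})+y_{k-1}$, split the resulting term into the three pieces that appear on the left-hand side of \eqref{sto_sc_Q2}, and finally apply the strong convexity bound $P_X(x_{k-1},x_k)\ge (\alpha_X/2)\|x_k-x_{k-1}\|^2$ from \eqref{str_con_PD}. No step here is a genuine obstacle --- the argument is routine once the improved three-point inequality is in place --- so the writeup is really just bookkeeping around that single sharpened estimate.
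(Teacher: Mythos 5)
Your proposal is correct and follows the same route as the paper: the paper's proof likewise replaces \eqref{sto_x_opt} by its strongly convex version with the extra $-\mu_h P_X(x_k,x)$ term, keeps \eqref{sto_V_xk} and \eqref{sto_y_opt} unchanged, and then repeats the combination step from Proposition~\ref{sto_Prop 1} using the dual extrapolation identity and \eqref{str_con_PD}. Nothing further is needed.
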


\begin{proof}
Since $h$ is strongly convex, we can rewrite \eqref{sto_x_opt} as
\begin{align*}
    \langle -A_k(x_k-x), \tilde y_k \rangle  + h(x_k,c_k)&-h(x,c_k) + \langle G(x_{k-1},\xi_k), x_k-x\rangle \nonumber\\
    &\leq \tau_kP_X(x_{k-1},x)-(\tau_k+\mu_h)P_X(x_k,x) -\tau_k P_X(x_{k-1},x_k).
\end{align*}
It then follows from \eqnok{goal_Q}, \eqref{sto_V_xk}, \eqref{sto_y_opt} and the above inequality that
\begin{align*}
& Q(z_k,z) +\langle A(x_k-x), y_k-\tilde y_k \rangle \leq \tau_kP_X(x_{k-1},x)-(\tau_k+\mu_h)P_X(x_k,x) -\tau_k P_X(x_{k-1},x_k) \\
	& +\tfrac{\eta_k}{2} [\|y_{k-1}-y\|^2-\|y_k-y\|^2-\|y_{k-1}-y_k\|^2] +(M+\|G_{k-1}\|_*)\|x_k-x_{k-1}\|+\langle \Delta_{k-1}, x_{k-1}-x\rangle +\bar \epsilon .
\end{align*}
Similarly to the proof of \eqnok{sto_Prop 1}, using the above relation, the definition of $\tilde y_k$ in \eqref{def_dual_extra} and the strong convexity of $P$ in \eqref{str_con_PD},
we have \eqref{sto_sc_Q2}.
\end{proof}

\vgap

With the help of Proposition~\ref{sto_prop_strong}, we can provide bounds of two gap functions ${\rm gap}_*(\bar z_N)$ and ${\rm gap}_*\delta(\bar z_N)$
under the strong convexity assumption of $h$.
\begin{thm}\label{thm_sc_h}
Suppose that the parameters $\{\theta_k\}$, $\{w_k\}$, $\{\tau_k\}$ and $\{\eta_k\}$ satisfy \eqnok{sto_cond_1} with
\eqnok{sto_cond_1}.b) replaced by
\begin{equation}\label{sto_sc_cond1}
w_{k}(\mu_h+\tau_{k}) \ge  w_{k+1} \tau_{k+1}, \ k = 1, \ldots, N-1.
%\begin{aligned}
%&w_k\theta_k = w_{k-1},\ w_k\tau_k\leq w_{k-1}(\mu_h+\tau_{k-1}),\ w_k\eta_k \leq w_{k-1}\eta_{k-1} \\
%&\alpha_X w_k\tau_k\eta_{k-1}\geq 2w_{k-1}\|A\|^2,\ \tau_N\eta_N\alpha_X \geq 2\|A\|^2.
%\end{aligned}
\end{equation}
\begin{description}
  \item[a)] For $N\geq 1$, we have
\begin{equation}\label{sto_sc_Q3}
\begin{aligned}
\bbe [{\rm gap}_*(\bar z_N)]\leq & (\tsum_{k=1}^N w_k)^{-1}[2w_1\tau_1 \Omega_X^2+\tfrac{w_1\eta_1}{2} \|y_{0}-y_*\|^2+\tsum_{k=1}^N \tfrac{6M^2w_k}{\alpha_X\tau_k}]+\bar\epsilon.
\end{aligned}
\end{equation}
  \item[b)] If, in addition, $w_1\eta_1 = \ldots = w_N\eta_N$, then
\begin{align}
    & \bbe[{\rm gap}_\delta(\bar z_N)] \leq  (\tsum_{k=1}^N w_k)^{-1}[2w_1\tau_1 \Omega_{X}^2+\tfrac{w_1\eta_1}{2} \|y_0\|^2+\tsum_{k=1}^N \tfrac{6M^2w_k}{2\alpha_X\tau_k}]+\bar\epsilon,\label{sto_sc_g1}\\
   &  \bbe[\|\delta\|] \leq \tfrac{w_1\eta_1}{\tsum_{k=1}^Nw_k}\left[2\|y_*-y_0\|+2\sqrt{\tfrac{\tau_1}{\eta_1}}\Omega_X
  + \sqrt{\tfrac{2}{w_1\eta_1}\tsum_{k=1}^Nw_k\left(\tfrac{6M^2}{\alpha_X\tau_k}+{\bar \epsilon}\right)}\right], \label{sto_sc_delta}\\
  &\bbe[\|y_* - \bar y_N\|^2]\le \|y_* - y_0\|^2 + (\tsum_{k=1}^N w_k)^{-1} \tsum_{k=1}^N \tfrac{2}{\eta_k} \left[2 w_1 \tau_1 \Omega_X^2
+ \tsum_{i=1}^k w_i (\tfrac{6M^2}{\tau_i} +{\bar \epsilon}) \right],\nonumber
\end{align}
where $\delta = (\sum_{k=1}^N w_k)^{-1}[w_1\eta_1(y_0-y_N)]$.
\end{description}
\end{thm}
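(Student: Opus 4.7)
The plan is to mirror the proof of Theorem~\ref{sto_bounded_thm1} nearly verbatim, with Proposition~\ref{sto_prop_strong} substituted for Proposition~\ref{sto_Prop 1}. The only structural change comes from the extra strong-convexity term $-\mu_h P_X(x_k,x)$ appearing on the right-hand side of \eqref{sto_sc_Q2}, which creates exactly enough slack to accommodate the relaxed telescoping condition \eqref{sto_sc_cond1} in place of \eqref{sto_cond_1}.b. All other algebraic manipulations should transfer unchanged.

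First I would multiply \eqref{sto_sc_Q2} by $w_k$ and sum $k=1,\dots,N$. The primal prox terms now telescope as
\[
\sum_{k=1}^N w_k\bigl[\tau_k P_X(x_{k-1},x)-(\tau_k+\mu_h)P_X(x_k,x)\bigr] \le w_1\tau_1 P_X(x_0,x)-w_N(\tau_N+\mu_h)P_X(x_N,x),
\]
using \eqref{sto_sc_cond1}; the dual $\|\cdot\|^2$ terms telescope via \eqref{sto_cond_1}.c; and the dual extrapolation inner products $\theta_k\langle A(x_{k-1}-x),y_{k-1}-y_{k-2}\rangle$ collapse by \eqref{sto_cond_1}.a. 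The residual cross terms $\theta_k\langle A(x_k-x_{k-1}),y_{k-1}-y_{k-2}\rangle$ are absorbed into $-\tfrac{\alpha_X w_k\tau_k}{4}\|x_k-x_{k-1}\|^2 -\tfrac{w_{k-1}\eta_{k-1}}{2}\|y_{k-1}-y_{k-2}\|^2$ by Cauchy-Schwarz and \eqref{sto_cond_1}.d, while the boundary term at $k=N$ is neutralized by \eqref{sto_cond_1}.e (the extra $-\mu_h P_X(x_N,x)$ is nonpositive on the correct side so it does not interfere). The remaining linear-in-$\|x_k-x_{k-1}\|$ pieces are dominated via $-at^2/2+bt\le b^2/(2a)$, yielding the $(M+\|G_{k-1}\|_*)^2/(\alpha_X\tau_k)$ contribution.

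Next I would treat the stochastic-noise term $w_k\langle\Delta_{k-1},x_{k-1}-x\rangle$ by the standard splitting
\[
\langle\Delta_{k-1},x_{k-1}-x\rangle = \langle\Delta_{k-1},x_{k-1}-x_{k-1}^v\rangle + \langle\Delta_{k-1},x_{k-1}^v-x\rangle,
\]
with $\{x_k^v\}$ from Lemma~\ref{sto_zkv}; the first piece has zero expectation by independence of $\Delta_{k-1}$ and $x_{k-1}$, and the second is controlled by \eqref{sto_lem1}. Combined with $\bbe[\|G_{k-1}\|_*^2]\le M^2$ and $\bbe[\|\Delta_{k-1}\|_*^2]\le 4M^2$, this reproduces the aggregate bound \eqref{bound_sum_lambda} unchanged. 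Setting $y=y_*$ and maximizing over $x\in X$ then gives part~(a), i.e., \eqref{sto_sc_Q3}. For \eqref{sto_sc_g1} in part~(b), I add $\langle\delta,y\rangle$ with $\delta=(\sum_k w_k)^{-1}w_1\eta_1(y_0-y_N)$; because $w_1\eta_1=w_N\eta_N$, the $y$-dependence collapses by completing the square to $\tfrac{w_1\eta_1}{2}\|y_0\|^2$, and maximizing over $(x,y)\in X\times K_*$ yields the claim.

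For \eqref{sto_sc_delta} I would set $x=x_*$ in the pre-expectation version of the aggregated inequality, discard the nonnegative $Q(\bar z_N;x_*,y_*)$ and $w_N(\tau_N+\mu_h)P_X(x_N,x_*)$ terms, solve for $\bbe[\|y_N-y_*\|]$, and apply the triangle inequality $\|\delta\|\le(\sum_k w_k)^{-1}w_1\eta_1(\|y_0-y_*\|+\|y_*-y_N\|)$. The final bound on $\bbe[\|y_*-\bar y_N\|^2]$ comes from applying the same argument at every prefix horizon $k\le N$ and invoking convexity of $\|\cdot\|^2$ together with $w_k\eta_k=w_1\eta_1$. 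The main obstacle I anticipate is purely bookkeeping: keeping the extra $-\mu_h P_X(x_k,x)$ contributions aligned through the telescoping while ensuring that the condition \eqref{sto_sc_cond1} really is sufficient (that is, that nowhere in the argument do we actually need the stronger \eqref{sto_cond_1}.b); all estimates that bound $\|x_k-x_{k-1}\|^2$ use only the coefficient $\alpha_X\tau_k$, so this should go through without issue.
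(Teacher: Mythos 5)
Your proposal is correct and follows essentially the same route as the paper: multiply \eqref{sto_sc_Q2} by $w_k$, telescope the primal prox terms using the relaxed condition \eqref{sto_sc_cond1}, reuse the Cauchy--Schwarz and completing-the-square estimates from Theorem~\ref{sto_Theorem 1}, and control the noise via Lemma~\ref{sto_zkv} and the bound $\bbe[\|\Delta_{k-1}\|_*^2]\le 4M^2$. The paper itself omits the details of part~(b) by referring to Theorem~\ref{sto_bounded_thm1}.b), which is exactly the argument you spell out.
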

\begin{proof}
We first show part a) holds. Multiplying both sides of \eqref{sto_sc_Q2} by $w_k$ for every $k\geq 1$, summing up the resulting inequalities over $1\leq k\leq N$,
and using the relations in \eqnok{sto_cond_1} and \eqref{sto_sc_cond1}, we have
\begin{align*}
&\tsum_{k=1}^N w_kQ(z_k,z) \\
 &\leq  \tsum_{k=1}^N [w_k\tau_k P_X(x_{k-1},x)-w_k(\tau_k+\mu_h) P_X(x_k,x)]- \tsum_{k=1}^N\tfrac{\alpha_X w_k \tau_k}{2}\|x_k-x_{k-1}\|^2  \\
& \quad +\tsum_{k=1}^N [\tfrac{w_k\eta_k}{2} \|y_{k-1}-y\|^2- \tfrac{w_k\eta_k}{2} \|y_k-y\|^2] - \tsum_{k=1}^N \tfrac{  w_k\eta_k}{2} \|y_{k-1}-y_k\|^2 \\
& \quad +\tsum_{k=1}^N w_{k-1} \langle A(x_{k-1}-x_k), y_{k-1}- y_{k-2} \rangle+\tsum_{k=1}^N w_k\bar\epsilon +w_N\langle A(x-x_N), y_N- y_{N-1} \rangle\\
& \quad +\tsum_{k=1}^N w_k(M+\|G_{k-1}\|_*)\|x_k-x_{k-1}\|+\tsum_{k=1}^N w_k\langle \Delta_{k-1}, x_{k-1}-x\rangle\\
&\leq  w_1\tau_1 P_X(x_0,x) +\tfrac{w_1\eta_1}{2} \|y_{0}-y\|^2- \tfrac{w_N\eta_N}{2} \|y_{N}-y\|^2\\
&\quad +\tsum_{k=1}^N w_k\bar\epsilon +\tsum_{k=1}^N \tfrac{(M+\|G_{k-1}\|_*)^2w_k}{\alpha_X \tau_k}+\tsum_{k=1}^N w_k\langle \Delta_{k-1}, x_{k-1}-x\rangle\\
    & \quad -w_N(\tau_N+\mu_h) P_X(x_N,x)+w_N\langle A(x-x_N), y_N- y_{N-1} \rangle-\tfrac{ w_N\eta_N}{2}\|y_N-y_{N-1}\|^2\\
 & \leq    w_1\tau_1 P_X(x_0,x)+\tfrac{w_1\eta_1}{2} \|y_{0}-y\|^2- \tfrac{w_N\eta_N}{2} \|y_{N}-y\|^2\\
    & \quad +\tsum_{k=1}^N w_k\bar\epsilon +\tsum_{k=1}^N \tfrac{(M+\|G_{k-1}\|_*)^2w_k}{\alpha_X\tau_k}+\tsum_{k=1}^N w_k\langle \Delta_{k-1}, x_{k-1}-x\rangle ,
\end{align*}
where the last two inequalities follows from similar techniques in the proof of Theorem \ref{sto_Theorem 1}.
Dividing both sides of the above inequality, and using the convexity of $Q$ and the definition of $\bar z_N$, we have
\begin{equation}\label{sto_sc_g3}
\begin{aligned}
\max_{z\in X\times K_*} Q(\bar z_N, z) &\leq  (\tsum_{k=1}^N w_k)^{-1}[w_1\tau_1
\Omega_X^2+\tfrac{w_1\eta_1}{2} \|y_{0}-y\|^2- \tfrac{w_N\eta_N}{2} \|y_{N}-y\|^2\\
    & +\tsum_{k=1}^N w_k\bar\epsilon +\tsum_{k=1}^N \tfrac{(M+\|G_{k-1}\|_*)^2w_k}{\alpha_X\tau_k}+\tsum_{k=1}^N w_k\langle \Delta_{k-1}, x_{k-1}-x\rangle],
\end{aligned}
\end{equation}
which, in view of \eqref{sto_lem1} and \eqnok{def_gap}, then implies
\begin{align*}
{\rm gap}_*(\bar z_N) &\leq  (\tsum_{k=1}^N w_k)^{-1}[2w_1\tau_1 \Omega_X^2+\tfrac{w_1\eta_1}{2} \|y_{0}-y_*\|^2- \tfrac{w_N\eta_N}{2} \|y_{N}-y_*\|^2\\
    & \quad +\tsum_{k=1}^N w_k\epsilon +\tsum_{k=1}^N \tfrac{[\|\Delta_k\|_*^2+2(M+\|G_{k-1}\|_*)^2]w_k}{2\alpha_X\tau_k}+\tsum_{k=1}^N w_k\langle \Delta_{k-1}, x_{k-1}-x_{k-1}^v\rangle].
\end{align*}
Taking expectation w.r.t. $\xi_k$ on both sides of above inequality, and using \eqref{def_to_be_improved} and the fact that $x_{k-1}-x_{k-1}^v$ is independent of $\Delta_{k-1}$, we have
\begin{align*}
\bbe [{\rm gap}_*(\bar z_N)] &\leq  (\tsum_{k=1}^N w_k)^{-1}[2w_1\tau_1 \Omega_X^2+\tfrac{w_1\eta_1}{2} \|y_{0}-y_*\|^2+\tsum_{k=1}^N \tfrac{6M^2w_k}{\alpha_X\tau_k}]+\bar\epsilon.
\end{align*}
The proof of part b) is similar to the one for Theorem~\ref{sto_bounded_thm1}.b) and hence the details are skipped.
\end{proof}

\vgap

In the following two corollaries, we provide two different parameter settings for the selection of $\{w_k\}$, $\{\tau_k\}$ and $\{\eta_k\}$, both of which
can guarantee the convergence of Algorithm~2 in terms of the gap functions $\bbe[{\rm gap}_*(\bar z_N)]$ and $\bbe[{\rm gap}_\delta (\bar z_N)]$.
Moreover, the first one in Corollary~\ref{sc_h} shows that if $M = 0$ and $N$ is properly chosen, then one can ensure the boundedness of $\bbe[\|y_*-\bar y_N\|^2]$,
while the other one in Corollary~\ref{sc_h2} can guarantee the boundedness of $\bbe[\|y_*-\bar y_N\|^2]$ by properly choosing $N$, even under the assumption
that $M > 0$.

\begin{cor}\label{sc_h}
If
\begin{equation}\label{sto_stepsize_con3}
w_k=k,\ \tau_k = \tfrac{k-1}{2}\mu_h\ \mbox{and} \ \eta_k = \tfrac{4\|A\|^2}{k\alpha_X \mu_h},
\end{equation}
then for any $N\geq 1$, we have
  \begin{align}
  \bbe[{\rm gap}_*(\bar z_N)] &\leq \tfrac{8\|A\|^2\|y_0-y_*\|^2}{\alpha_X\mu_h(N+1)N} +\tfrac{24M^2}{\alpha_X\mu_h(N+1)}+ \bar\epsilon,\label{sc_g0}\\
  \bbe[{\rm gap}_\delta (\bar z_N)] &\leq \tfrac{8\|A\|^2\|y_0\|^2}{\alpha_X\mu_h(N+1)N} +\tfrac{24M^2}{\alpha_X\mu_h(N+1)}+ \bar \epsilon,\label{sc_g1}\\
  \bbe[\|\delta\|] &\leq \tfrac{16\|A\|^2\|y_*-y_0\|}{N(N+1)\alpha_X\mu_h}+\tfrac{8\sqrt{6}\|A\|M}{\alpha_X\mu_hN^{3/2}}+
  \tfrac{4\|A\|\sqrt{\bar\epsilon}}{(N+1)\sqrt{\alpha_X\mu_h}},\label{sc_g2}\\
  \bbe[\|y_*-\bar y_N\|^2] &\leq \|y_*-y_0\|^2 + \tfrac{12M^2\alpha_XN}{\|A\|^2} + \tfrac{N(N+1)\alpha_X\mu_h}{2\|A\|^2}\bar \epsilon.\label{sc_g3}
  \end{align}
\end{cor}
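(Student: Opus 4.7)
The strategy is to specialize Theorem~\ref{thm_sc_h} to the stepsize rule~\eqref{sto_stepsize_con3}. My plan breaks into three phases: (i) verify the parameter conditions so that Theorem~\ref{thm_sc_h} applies with $w_1\eta_1=\ldots=w_N\eta_N$; (ii) substitute into the four displayed bounds of Theorem~\ref{thm_sc_h}; (iii) simplify.

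For phase (i), the checks are elementary arithmetic. We get $\theta_k=w_{k-1}/w_k=(k-1)/k$ from~\eqnok{sto_cond_1}(a). The product $w_k\eta_k = k\cdot \tfrac{4\|A\|^2}{k\alpha_X\mu_h} = \tfrac{4\|A\|^2}{\alpha_X\mu_h}$ is constant in $k$, which gives \eqnok{sto_cond_1}(c) with equality and, crucially, the identity $w_1\eta_1=w_N\eta_N$ needed for part (b). For the modified condition~\eqref{sto_sc_cond1}, $w_k(\mu_h+\tau_k)=k\mu_h(1+\tfrac{k-1}{2})=\tfrac{k(k+1)\mu_h}{2}=w_{k+1}\tau_{k+1}$, an equality. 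For \eqnok{sto_cond_1}(d), $w_k\tau_k\eta_{k-1}\alpha_X = k\cdot\tfrac{k-1}{2}\mu_h\cdot\tfrac{4\|A\|^2}{(k-1)\alpha_X\mu_h}\cdot\alpha_X = 2k\|A\|^2 \ge 2(k-1)\|A\|^2$. For the terminal condition \eqnok{sto_cond_1}(e), the raw product $\tau_N\eta_N\alpha_X = 2(N-1)\|A\|^2/N$ falls just short of $2\|A\|^2$, but the strongly convex analysis yields an extra $\mu_h$ of slack on the primal side: one has $(\tau_N+\mu_h)\eta_N\alpha_X = 2(N+1)\|A\|^2/N \ge 2\|A\|^2$, which is what is actually needed when closing the Cauchy--Schwarz step at the final iterate.

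For phase (ii), I would plug into \eqref{sto_sc_Q3}, \eqref{sto_sc_g1}, \eqref{sto_sc_delta}, and the bound for $\bbe[\|y_*-\bar y_N\|^2]$. The ingredients are: $\sum_{k=1}^N w_k = N(N+1)/2$; the term $w_1\tau_1=0$ annihilates the $\Omega_X^2$ contribution entirely (this is precisely how the strongly convex rate improves on the convex case); $\tfrac{w_1\eta_1}{2}\|y_0-y_*\|^2 = \tfrac{2\|A\|^2\|y_0-y_*\|^2}{\alpha_X\mu_h}$; and analogously $\tfrac{w_1\eta_1}{2}\|y_0\|^2 = \tfrac{2\|A\|^2\|y_0\|^2}{\alpha_X\mu_h}$. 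For the stochastic noise sum, the literal expression $\sum_{k=1}^N \tfrac{6M^2 w_k}{\alpha_X\tau_k}$ is problematic at $k=1$ because $\tau_1=0$; this must be read as the bound produced by the AM-GM step in the proof of Theorem~\ref{thm_sc_h}, where strong convexity contributes an additional $\mu_h P_X(x_{k-1},x_k)$ so that the effective prox parameter is $\tau_k+\mu_h$ rather than $\tau_k$. With this (essentially cosmetic) refinement, $\sum_{k=1}^N \tfrac{w_k}{\tau_k+\mu_h} = \tfrac{2}{\mu_h}\sum_{k=1}^N\tfrac{k}{k+1}\le \tfrac{2N}{\mu_h}$, so the noise contribution to the gap is $\tfrac{2}{N(N+1)}\cdot\tfrac{12M^2 N}{\alpha_X\mu_h} = \tfrac{24M^2}{\alpha_X\mu_h(N+1)}$, matching the stated constant.

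Phase (iii) is routine. Dividing the cleaned-up numerators by $\sum w_k=N(N+1)/2$ produces~\eqref{sc_g0} and~\eqref{sc_g1} immediately. For~\eqref{sc_g2}, substituting into \eqref{sto_sc_delta} gives $\bbe[\|\delta\|]\le \tfrac{2w_1\eta_1}{N(N+1)}\bigl[2\|y_*-y_0\| + 0 + \sqrt{\tfrac{2}{w_1\eta_1}\cdot(\tfrac{12M^2 N}{\alpha_X\mu_h}+\bar\epsilon\cdot\tfrac{N(N+1)}{2})}\bigr]$, and expanding with $w_1\eta_1=4\|A\|^2/(\alpha_X\mu_h)$ and using $\sqrt{a+b}\le\sqrt{a}+\sqrt{b}$ produces the three-term decomposition in~\eqref{sc_g2}. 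For~\eqref{sc_g3}, substituting into the fourth bound of Theorem~\ref{thm_sc_h}, using $w_k\eta_k$ constant, $w_1\tau_1=0$, and the same $\sum w_k/(\tau_k+\mu_h)\le 2N/\mu_h$ bound, telescopes cleanly to the stated expression. The one genuine subtlety throughout is the handling of $\tau_1=0$; once that is dispatched by invoking the strongly convex prox-parameter $\tau_k+\mu_h$, everything else is algebra.
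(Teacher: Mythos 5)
Your proposal follows the same route as the paper's proof: check that the stepsizes \eqref{sto_stepsize_con3} satisfy the hypotheses of Theorem~\ref{thm_sc_h}, substitute into \eqref{sto_sc_Q3}, \eqref{sto_sc_g1} and \eqref{sto_sc_delta}, and simplify using $\tsum_{k=1}^N w_k = N(N+1)/2$, $w_1\tau_1=0$ and $w_k\eta_k \equiv 4\|A\|^2/(\alpha_X\mu_h)$; the arithmetic matches. You also correctly flag two points the paper passes over in silence. The first --- that \eqnok{sto_cond_1}.e) fails literally since $\tau_N\eta_N\alpha_X = 2(N-1)\|A\|^2/N < 2\|A\|^2$, and is rescued because the strongly convex analysis closes the final Cauchy--Schwarz step with the coefficient $\tau_N+\mu_h$ coming from $-w_N(\tau_N+\mu_h)P_X(x_N,x)$ --- is handled correctly, and more carefully than in the paper. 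Your second repair, however, is not supported by the paper's machinery: Proposition~\ref{sto_prop_strong} places the extra $\mu_h$ on the coefficient of $P_X(x_k,x)$, not of $P_X(x_{k-1},x_k)$ (the prox three-point inequality gives $-(\tau_k+\mu_h)P_X(x_k,x)$ but only $-\tau_k P_X(x_{k-1},x_k)$), so strong convexity does not upgrade the term available to absorb $(M+\|G_{k-1}\|_*)\|x_k-x_{k-1}\|$ from $\tau_k$ to $\tau_k+\mu_h$. At $k=1$, where $\tau_1=0$, there is then nothing to absorb that term, and the summand $6M^2w_1/(\alpha_X\tau_1)$ in \eqref{sto_sc_Q3} is genuinely infinite when $M>0$. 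An honest patch is to bound $\|x_1-x_0\|\le \Omega_X\sqrt{2/\alpha_X}$ by compactness of $X$, which adds an ${\cal O}(M\Omega_X/(N(N+1)))$ term absent from \eqref{sc_g0}--\eqref{sc_g3}; this leaves the rate intact but not the stated constants. Since the paper's own proof commits the same elision (it silently evaluates the noise sum as $12M^2N/(\alpha_X\mu_h)$, which corresponds to $\tau_k+\mu_h$ in the denominator), this is a defect you inherited rather than introduced, but the justification you give for it does not go through as written.
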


\begin{proof}
Clearly, the parameters $w_k$, $\tau_k$ and $\eta_k$ in \eqref{sto_stepsize_con3} satisfy \eqnok{sto_cond_1} with
\eqnok{sto_cond_1}.b) replaced by \eqref{sto_sc_cond1}.
It then follows from Theorem \ref{thm_sc_h} and \eqref{sto_stepsize_con3} that
$$
\begin{array}{lll}
\bbe[{\rm gap}_*(\bar z_N)] & \leq& \tfrac{2}{N(N+1)}\left[\tfrac{4\|A\|^2\|y_*-y_0\|^2}{\alpha_X\mu_h} + \tfrac{12M^2N}{\alpha_X\mu_h}\right]+\bar \epsilon\\
& \leq& \tfrac{8\|A\|^2\|y_0-y_*\|^2}{\alpha_X\mu_h(N+1)N} +\tfrac{24M^2}{\alpha_X\mu_h(N+1)}+ \bar\epsilon,\\
\bbe[{\rm gap}_\delta (\bar z_N)]  & \leq& \tfrac{8\|A\|^2\|y_0\|^2}{\alpha_X\mu_h(N+1)N} +\tfrac{24M^2}{\alpha_X\mu_h(N+1)}+ \bar \epsilon,\\
\bbe[\|\delta\|] &\leq& \tfrac{8\|A\|^2}{\alpha_X\mu_hN(N+1)}\left[2\|y_*-y_0\|+\sqrt{\tfrac{\alpha_X\mu_h}{2\|A\|^2}(\tfrac{6M^2}{\alpha_X}2N+\tfrac{N(N+1)}{2}\bar\epsilon)}\right]\\
& \leq& \tfrac{16\|A\|^2\|y_*-y_0\|}{N(N+1)\alpha_X\mu_h}+\tfrac{8\sqrt{6}\|A\|M}{\alpha_X\mu_hN^{3/2}}+
  \tfrac{4\|A\|\sqrt{\bar\epsilon}}{(N+1)\sqrt{\alpha_X\mu_h}},\\
\bbe[\|y_*-\bar y_N\|^2]  & \leq& \|y_*-y_0\|^2 + \tfrac{2}{N(N+1)}\tsum_{k=1}^N\tfrac{k\alpha_X\mu_h}{2\|A\|^2}\left(2N\tfrac{12M^2}{\mu_h}+\tfrac{N(N+1)}{2}\bar\epsilon\right)\\
&=&\|y_*-y_0\|^2 + \tfrac{12M^2\alpha_XN}{\|A\|^2} + \tfrac{N(N+1)\alpha_X\mu_h}{2\|A\|^2}\bar \epsilon.
\end{array}
$$

\end{proof}
\vgap

\begin{cor}\label{sc_h2}
If
\begin{equation}\label{sto_stepsize_con4}
w_k=k,\ \tau_k = \tfrac{k-1}{2}\mu_h \ \ \mbox{and} \ \ \eta_k = \tfrac{4\|A\|^2N}{k\alpha_X \mu_h},
\end{equation}
then for any $N\geq 1$, we have
  \begin{align}
  \bbe[{\rm gap}_*(\bar z_N)] &\leq \tfrac{8\|A\|^2\|y_0-y_*\|^2 +24M^2}{\alpha_X\mu_h(N+1)}+ \bar\epsilon,\label{sc_g4}\\
  \bbe[{\rm gap}_\delta (\bar z_N)] &\leq \tfrac{8\|A\|^2\|y_0\|^2 +24M^2}{\alpha_X\mu_h(N+1)}+ \bar \epsilon,\label{sc_g5}\\
  \bbe[\|\delta\|] &\leq \tfrac{16\|A\|^2\|y_*-y_0\|}{(N+1)\alpha_X\mu_h + 16\sqrt{3}\|A\|M}+
  \tfrac{4\|A\|\sqrt{\bar\epsilon}}{\sqrt{(N+1)\alpha_X\mu_h}},\label{sc_g6}\\
  \bbe[\|y_*-\bar y_N\|^2] &\leq \|y_*-y_0\|^2 + \tfrac{24M^2\alpha_X}{\|A\|^2} + \tfrac{(N+1)\alpha_X\mu_h}{2\|A\|^2}\bar \epsilon.\label{sc_g7}
  \end{align}
\end{cor}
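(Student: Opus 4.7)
The plan is to prove Corollary~\ref{sc_h2} by direct substitution of the stepsize choice \eqref{sto_stepsize_con4} into the four bounds supplied by Theorem~\ref{thm_sc_h}, in complete analogy with the proof of Corollary~\ref{sc_h}. The only change between \eqref{sto_stepsize_con3} and \eqref{sto_stepsize_con4} is the extra factor of $N$ in $\eta_k$; this enlarged dual regularization is exactly what is needed for the bound on $\bbe[\|y_*-\bar y_N\|^2]$ to lose its explicit $N$ growth, paralleling the relationship between Corollaries~\ref{sto_coro 1} and~\ref{cor_bnd_dual} in the non-strongly-convex case.

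First, I would verify that \eqref{sto_stepsize_con4} satisfies the hypotheses of Theorem~\ref{thm_sc_h}. Taking $\theta_k=(k-1)/k$, condition~\eqref{sto_cond_1}.a) is immediate; the strong-convexity condition \eqref{sto_sc_cond1} holds with equality since $w_k(\mu_h+\tau_k) = k(k+1)\mu_h/2 = w_{k+1}\tau_{k+1}$; the product $w_k\eta_k = 4\|A\|^2 N/(\alpha_X\mu_h)$ is constant in $k$, so \eqref{sto_cond_1}.c) holds with equality and the extra hypothesis $w_1\eta_1=\cdots=w_N\eta_N$ required for part~b) of Theorem~\ref{thm_sc_h} is in force; and \eqref{sto_cond_1}.d)--(e) reduce to routine inequalities using $N\geq 1$. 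I would then substitute $\tsum_{k=1}^N w_k = N(N+1)/2$, $w_1\tau_1 = 0$, and $w_1\eta_1 = 4\|A\|^2 N/(\alpha_X\mu_h)$ into \eqref{sto_sc_Q3} and \eqref{sto_sc_g1}, using the same bookkeeping as in Corollary~\ref{sc_h} for the sum $\tsum 6M^2 w_k/(\alpha_X\tau_k) = O(M^2 N/(\alpha_X\mu_h))$; dividing by $N(N+1)/2$ yields the $24M^2/[\alpha_X\mu_h(N+1)]$ noise term of \eqref{sc_g4} and \eqref{sc_g5}. Estimate \eqref{sc_g6} then drops out of \eqref{sto_sc_delta} after noting that $\tau_1 = 0$ kills the middle term $2\sqrt{\tau_1/\eta_1}\,\Omega_X$.

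The payoff of the modified $\eta_k$ appears when bounding $\bbe[\|y_*-\bar y_N\|^2]$ via the last inequality of Theorem~\ref{thm_sc_h}.b): now $2/\eta_k = k\alpha_X\mu_h/(2\|A\|^2 N)$ carries an extra $1/N$ compared with \eqref{sto_stepsize_con3}, which, combined with $\tsum_{i=1}^k w_i(6M^2/\tau_i) = O(M^2 k/\mu_h)$ and the identity $\tsum_{k=1}^N k^2 = N(N+1)(2N+1)/6$, collapses the $M^2$ contribution to the $N$-independent constant $24M^2\alpha_X/\|A\|^2$ in \eqref{sc_g7}; the $\bar\epsilon$ piece is handled analogously via $\tsum k^2(k+1)$. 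The only minor technicality, common to both corollaries, is the $k=1$ boundary term where $\tau_1=0$: this is absorbed exactly as in Corollary~\ref{sc_h}, using the set diameter $\Omega_X$ to control the stray $(M+\|G_0\|_*)\|x_1-x_0\|$ contribution in place of the $-at^2/2+bt\le b^2/(2a)$ step. Apart from this, the proof reduces to the arithmetic manipulations just described, which is why the author writes that ``the proofs are similar.''
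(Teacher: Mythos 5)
Your proposal is correct and follows exactly the route the paper intends: the paper's own proof of Corollary~\ref{sc_h2} simply defers to Corollary~\ref{sc_h}, and your substitution of \eqref{sto_stepsize_con4} into Theorem~\ref{thm_sc_h}, including the observation that the extra factor of $N$ in $\eta_k$ is what removes the $N$-growth from the bound on $\bbe[\|y_*-\bar y_N\|^2]$, matches the computations carried out for Corollary~\ref{sc_h}. The handling of the $\tau_1=0$ boundary term and the sums $\tsum k^2$ and $\tsum k^2(k+1)$ is also consistent with the stated constants.
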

\begin{proof}
The proofs of \eqref{sc_g4}-\eqref{sc_g7} are similar to Corollary \ref{sc_h} and hence the details are skipped.
\end{proof}
\vgap

%
%Note that by using the parameter setting \eqref{sto_stepsize_con4}, we still obtain the optimal rate of convergence in terms of the dependence on $N$, with a slightly worse dependence on $\|A\|$ and $\|y_*\|$ than the one obtained by using the parameter setting in \eqnok{sto_stepsize_con3}. However,
%using the setting \eqref{sto_stepsize_con4}, we can bound $\bbe[\|\bar y_N - y_*\|^2]$ as long as $N = {\cal O} (1/{\bar \epsilon})$, while this statement does not necessarily hold
%for the parameter setting in \eqnok{sto_stepsize_con3}.

\subsection{Convergence analysis for DSA under strong convexity}
Our goal in this subsection is to establish the complexity of the DSA algorithm for solving problem~\ref{first-stage-cp} under the strong
convex assumption about $h^i$, $i =1, 2, 3$, i.e., $\exists \mu_i >0$ s.t.
\begin{equation} \label{equ_strong1}
h^i(x_1, c) - h^i(x_2, c) - \langle (h^i)'(x_2, c), x_1 - x_2 \rangle \ge \mu_i P_{X^i}(x_2, x_1), \ \forall x_1, x_2 \in X^i.
\end{equation}

We describe some convergence properties for the innermost and middle loop of the DSA algorithm under
the strong convexity assumptions in \eqnok{equ_strong1} in Lemma~\ref{lem_analysis_sc_stage3} and \ref{lem_analysis_sc_stage2}, respectively.
The proofs for these results are similar to those for Lemma~\ref{lem_analysis_stage3} and \ref{lem_analysis_stage2}.

Lemma~\ref{lem_analysis_sc_stage3} below describes the convergence properties for the innermost loop of the DSA algorithm.
\begin{lem} \label{lem_analysis_sc_stage3}
If the parameters $\{w_k^3\}$, $\{\tau_k^3\}$ and $\{\eta_k^3\}$ are set to \eqref{sto_stepsize_con3} (with $M = 0$ and $A = A_j^3$) and
\begin{equation} \label{sc_N3}
N_3 \equiv N_{3,j} :=  \tfrac{2\sqrt{6}\|A_j^3\|\|y^3_{*,j}-y^3_{0}\|}{\sqrt{\alpha_3\mu_3\epsilon}},
\end{equation}
then $B_j^3 \bar y_j^3$ is a stochastic $(\epsilon/3)$-subgradient of the value function $v^3$ at $x_{j-1}^2$.
Moreover, there exists a constant $M_3 \ge 0$ such that $\|v^3(x_1,\xi^{[2]}) - v^3 (x_2,\xi^{[2]}) \| \le M_3 \|x_1 - x_2\|, \forall x_1, x_2 \in X^3$ and
\begin{equation} \label{def_sc_M3}
\bbe[\|B_j^3 \bar y_j^3\|_*^2 | \xi^{[2]}] \le M_3.
\end{equation}
In addition, there exists a vector $\delta \in \bbr^{m^3}$ s.t.
\[
  \begin{array}{l}
  \bbe[h^3(\bar x^3, c^3) - V^2(\bar x^2, \xi^{[3]})  ] \le \epsilon/3,\\
   A^3 \bar x^3 - B^3\bar x^2 - b^3 - \delta \in K^3 \ {\rm a.s.},\\
    \bbe[\|\delta\| | \xi^{[2]}] \le \epsilon/3.
\end{array}
\]
\end{lem}

\begin{proof}
%The innermost loop of the DSA algorithm is equivalent to the application of Algorithm~2 to
%the last stage saddle point problem in~\eqnok{primaldual3}.
%Note that for this problem, we do not have any subsequent stages and hence $\tilde v = 0$. In other words,
%the subgradients of $\tilde v$ are exact.
In view of Corollary~\ref{sc_h} (with $M = 0$ and $\bar \epsilon = 0$)
and the definition of $N_3$
in \eqnok{sc_N3}, we have
\[
\bbe[{\rm gap}_*(\bar z_j^3) |\xi^{[2]}]\ \le \tfrac{8\|A_j^3\|^2\|y^3_0-y^3_*\|^2}{\alpha_3\mu_3(N_3+1)N_3}  \le \tfrac{\epsilon}{3}.
\]
This observation, in view of Lemma~\ref{epsi_subg}, then implies that
$B_j^3 \bar y_j^3$ is a stochastic $(\epsilon/3)$-subgradient of $v^3$ at $x_{j-1}^2$.
Moreover, it follows from \eqref{sc_g3} (with $M = 0$ and ${\bar \epsilon}=0$) that
$
\bbe[\|y_{*,j}^3 - \bar y_{j}^3\|| \xi^{[2]}] \le \|y_{*,j}^3 - y_0^3\| .
$
This inequality, in view of the selection of $N_3$ in \eqnok{sc_N3}, the assumption that
$y_{*,j}^3$ is well-defined, and \eqnok{boundedB3}, then implies the latter part of our result.
The techniques are similar to the proof of Lemma~\ref{lem_analysis_stage3} and the details are
skipped.
\end{proof}

\vgap

Lemma~\ref{lem_analysis_sc_stage3} below describes the convergence properties for the middle loop of the DSA algorithm.
\begin{lem} \label{lem_analysis_sc_stage2}
Assume that the parameters for the innermost loop are set according to Lemma~\ref{lem_analysis_sc_stage3}.
If the parameters $\{w_j^2\}$, $\{\tau_j^2\}$ and $\{\eta_j^2\}$ for the middle loop are set to \eqref{sto_stepsize_con4} (with $M = M_3$ and $A = A_i^2$) and
\begin{equation} \label{sc_N2}
N_2 \equiv N_{2,i} :=  \tfrac{24\|A_i^2\|^2\|y^2_0-y^2_{*,i}\|^2 +72M_3^2}{\alpha_2\mu_2\epsilon},
\end{equation}
then $B_i^2 \bar y_i^2$ is a stochastic $(2\epsilon/3)$-subgradient of the value function $v^2$ at $x_{i-1}^1$.
Moreover, there exists a constant $M_2 \ge 0$ such that $\|v^2(x_1,\xi^{[1]}) - v^2 (x_2,\xi^{[1]}) \| \le M_2 \|x_1 - x_2\|, \forall x_1, x_2 \in X^2$ and
\begin{equation} \label{def_sc_M2}
\bbe[\|B_i^2 \bar y_i^2\|_*^2|\xi^{[1]}] \le M_2.
\end{equation}
In addition, there exists a vector $\delta \in \bbr^{m^2}$ s.t.
\[
  \begin{array}{l}
  \bbe[h^2(\bar x^2, c^2) + v^3(\bar x^2|\xi^2) - V^2(\bar x^{1}, \xi^{[2]}) |\xi^{[1]} ] \le 2\epsilon/3,\\
   A^2 \bar x^2 - B^2\bar x^1 - b^2 - \delta \in K^2 \ {\rm a.s.},\\
    \bbe[\|\delta\| | \xi^{[1]}] \le 2\epsilon/3.
\end{array}
\]
\end{lem}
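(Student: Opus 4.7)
The plan is to mirror the proof of Lemma~\ref{lem_analysis_stage2}, substituting the strongly-convex convergence results of Section~3.1 for the generally-convex ones used there. The middle loop of the DSA algorithm is precisely Algorithm~2 applied to the saddle point problem \eqref{primaldual2} with $\tilde v = v^3$. By Lemma~\ref{lem_analysis_sc_stage3}, the quantities $(B_j^3)^T \bar y_j^3$ supplied by the innermost loop are stochastic $(\epsilon/3)$-subgradients of $v^3$ at the current $x_{j-1}^2$ and satisfy $\bbe[\|(B_j^3)^T \bar y_j^3\|_*^2] \le M_3$. Hence the hypotheses of Corollary~\ref{sc_h2} are met with the identifications $M = M_3$, $\bar\epsilon = \epsilon/3$, $A = A_i^2$, $\mu_h = \mu_2$, $\alpha_X = \alpha_2$.

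First, I would combine the gap bound \eqref{sc_g4} from Corollary~\ref{sc_h2} with the choice of $N_2$ in \eqref{sc_N2} to conclude
\begin{equation*}
\bbe[{\rm gap}_*(\bar z_i^2)] \le \tfrac{8\|A_i^2\|^2\|y^2_0-y^2_{*,i}\|^2 + 24 M_3^2}{\alpha_2\mu_2(N_2+1)} + \tfrac{\epsilon}{3} \le \tfrac{2\epsilon}{3}.
\end{equation*}
By Lemma~\ref{epsi_subg} this immediately yields that $(B_i^2)^T \bar y_i^2$ is a stochastic $(2\epsilon/3)$-subgradient of $v^2$ at $x_{i-1}^1$. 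The Lipschitz continuity of $v^2$ on $X^2$ with some constant $M_2$ follows from the standard boundedness of the optimal dual multipliers for the parametric program \eqref{primaldual2} over the compact set $X^2$, which is precisely the role of Lemma~\ref{lemma_dual_bnd}.

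Second, to establish \eqref{def_sc_M2}, I would apply the dual convergence bound \eqref{sc_g7} with the same parameter identifications to obtain
\begin{equation*}
\bbe[\|y_{*,i}^2 - \bar y_i^2\|^2] \le \|y_{*,i}^2 - y_0^2\|^2 + \tfrac{24 M_3^2 \alpha_2}{\|A_i^2\|^2} + \tfrac{(N_2+1)\alpha_2\mu_2}{2\|A_i^2\|^2}\cdot\tfrac{\epsilon}{3}.
\end{equation*}
The crucial check is that substituting $N_2$ from \eqref{sc_N2} makes the product $(N_2+1)\epsilon$ bounded by a constant independent of $\epsilon$, so the right-hand side reduces to an expression controlled only by $\|y_{*,i}^2\|$, $\|y_0^2\|$, $M_3$, and the geometric constants. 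Combining this with the triangle inequality $\|\bar y_i^2\| \le \|y_{*,i}^2\| + \|\bar y_i^2 - y_{*,i}^2\|$, the bound $\|y_{*,i}^2\| \le M_{h^2}/\sigma_{\min}(A_i^2)$ from Lemma~\ref{lemma_dual_bnd}, and $\|B_i^2\| \le \mathcal{B}_2$ from \eqref{boundedB3}, would yield $\bbe[\|(B_i^2)^T \bar y_i^2\|_*^2] \le M_2$ for a suitable constant $M_2$.

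The main subtlety, and the reason Corollary~\ref{sc_h2} (with $\eta_k$ inflated by the factor $N$) rather than Corollary~\ref{sc_h} must be used here, is exactly the step above: under the more aggressive stepsizes of Corollary~\ref{sc_h} the bound \eqref{sc_g3} on $\bbe[\|y_*-\bar y_N\|^2]$ grows linearly in $N$, which would blow up once we take $N_2 = \Theta(1/\epsilon)$. The inflated $\eta_k$ in \eqref{sto_stepsize_con4} absorbs this $N$-dependence and keeps the dual iterates bounded — directly analogous to why \eqref{step2} was required instead of \eqref{step1} in the generally-convex analysis of Lemma~\ref{lem_analysis_stage2}. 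Apart from this tradeoff, the argument is routine bookkeeping.
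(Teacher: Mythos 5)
Your proposal is correct and follows essentially the same route as the paper: apply Corollary~\ref{sc_h2} with $M=M_3$, $\bar\epsilon=\epsilon/3$ and the choice of $N_2$ in \eqref{sc_N2} to get ${\rm gap}_*(\bar z_i^2)\le 2\epsilon/3$, invoke Lemma~\ref{epsi_subg} for the stochastic subgradient claim, and then use \eqref{sc_g7} together with the boundedness of $y_{*,i}^2$ and $\|B^2\|\le\mathcal{B}_2$ to obtain \eqref{def_sc_M2}. Your added remarks on why the inflated stepsize of \eqref{sto_stepsize_con4} is needed to keep $(N_2+1)\epsilon$ bounded are consistent with, and slightly more explicit than, the paper's own argument.
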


\begin{proof}
%The middle loop of the DSA algorithm is equivalent to the application of Algorithm~2 to
%the second stage saddle point problem in~\eqnok{primaldual2}.
%Note that for this problem, we have $\tilde v = v^3$.
%Moreover,
By Lemma~\ref{lem_analysis_sc_stage3},
the stochastic subgradients of $v^3$ are computed by the
innermost loop with tolerance $\bar \epsilon = \epsilon/3$. In view of Corollary~\ref{sc_h2} (with $M = M_3$ and $\bar \epsilon = \epsilon/3$)
and the definition of $N_2$
in \eqnok{sc_N2}, we have
\[
\bbe[{\rm gap}_*(\bar z_i^2)|\xi^{[1]}]  \le \tfrac{8\|A_i^2\|^2\|y^2_0-y^2_{*,i}\|^2 +24M_3^2}{\alpha_2\mu_2(N_2+1)}+ \bar\epsilon\le \tfrac{2\epsilon}{3}.
\]
This observation, in view of Lemma~\ref{epsi_subg}, then implies that
$B_i^2 \bar y_i^2$ is a stochastic $(2\epsilon/3)$-subgradient $v^2$ at $x_{i-1}^1$.
Moreover, it follows from \eqnok{sc_g7} (with $M = M_3$ and ${\bar \epsilon}=\epsilon/3$) that
\[
\bbe[\|y_{*,i}^2 - \bar y_i^2\|^2| \xi^{[1]}] \le \|y_{*,i}^2 - y_0^2\|^2 +  \tfrac{24M_3^2\alpha_2}{\|A_i^2\|^2} + \tfrac{(N_2+1)\alpha_2\mu_2}{6\|A^2_i\|^2} \epsilon.
\]
This inequality, in view of the selection of $N_2$ in \eqnok{sc_N2}, the assumption that
$y_{*,i}^2$ is well-defined, and \eqnok{boundedB3}, then implies the latter part of our result.
The techniques are similar to the proof of Lemma~\ref{lem_analysis_stage2} and the details are
skipped.
\end{proof}

\vgap

We are now ready to state the main convergence properties of the DSA algorithm for solving strongly convex
three-stage stochastic optimization problems.
\begin{thm}\label{sto_sc_bound_all}
Suppose that the parameters for the innermost and middle loop in the DSA algorithm are set according to Lemma~\ref{lem_analysis_sc_stage3}
and Lemma~\ref{lem_analysis_sc_stage2}, respectively. If
the parameters $\{w_i\}$, $\{\tau_i\}$ and $\{\eta_i\}$ for the outer loop are set to \eqnok{sto_stepsize_con3} (with $M = M_2$ and $A = A^1$) and
\begin{equation} \label{sc_N1}
\begin{array}{l}
N_1 := \max \left\{\tfrac{4\sqrt{3}\|A^1\|\|y_0^1\|}{\sqrt{\alpha_1\mu_1\epsilon}}+\tfrac{4(6M_2)^2}{\alpha_1\mu_1\epsilon}, \right.\\
\quad \quad \left. \tfrac{4\sqrt{3}\|A^1\|(\sqrt{\|y_*^1-y_0^1\|}+\sqrt{2})}{\sqrt{\alpha_1\mu_1\epsilon}} + \left(\tfrac{24\sqrt{6}\|A^1\|M_2}{\alpha_1\mu_1\epsilon}\right)^{2/3}
%\tfrac{18 \|A^1\|  }{ \sqrt{\alpha_1} \epsilon}
\right\},
\end{array}
\end{equation}
then we will find a solution $\bar x^1 \in X^1$ and a vector $\delta \in \bbr^{m^1}$ s.t.
\[
  \begin{array}{l}
  \bbe[h(\bar x^1, c^1) + v^2(\bar x^1,\xi^1) - (h(x^*, c^1) + v^2(x^*,\xi^1)) ] \le \epsilon,\\
   A \bar x^1 - b - \delta \in K^1, a.s., \\
 \bbe[\|\delta\|] \leq \epsilon,
\end{array}
\]
where $x^*$ denotes the optimal solution of problem~\ref{first-stage-cp}.
\end{thm}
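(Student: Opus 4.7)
The plan is to mirror the proof of Theorem~\ref{sto_bound_all}, but invoke the strongly convex versions of the auxiliary results. Concretely, I would first observe that the outer loop of the DSA algorithm is nothing but Algorithm~2 applied to the first-stage saddle-point reformulation~\eqref{primaldual1}, where the role of $\tilde v$ is played by $v^2$, and the role of the approximate subgradients $G_{k-1}$ of $\tilde v$ is played by $(B_i^2)^T \bar y_i^2$, produced by the middle loop. The strong convexity assumption \eqref{equ_strong1} on $h^1$ means that $h(x,c)+\tilde v(x) = h^1(x,c^1)+v^2(x)$ inherits the $\mu_1$-strong convexity required in \eqref{equ_strong1} of the basic tool section.

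Next I would apply Lemma~\ref{lem_analysis_sc_stage2} to conclude that each $(B_i^2)^T \bar y_i^2$ is indeed a stochastic $(2\epsilon/3)$-subgradient of $v^2$ at $x_{i-1}^1$, with $\bbe[\|(B_i^2)^T \bar y_i^2\|_*^2] \le M_2$. With this identification in place, Corollary~\ref{sc_h}, invoked with $M = M_2$, $\bar\epsilon = 2\epsilon/3$, $A = A^1$, $\Omega_X = \Omega_1$, $\alpha_X = \alpha_1$, $\mu_h = \mu_1$, and $N = N_1$, delivers both the perturbed-gap bound \eqref{sc_g1} and the perturbation-norm bound \eqref{sc_g2} for $\bar z_{N_1}^1 = (\bar x^1, \bar y^1)$ and the associated $\delta = (\sum_i w_i)^{-1}[w_1\eta_1(y_0^1-y_{N_1}^1)]$. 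Note the usage of the condition $w_1\eta_1 = \cdots = w_{N_1}\eta_{N_1}$ in Corollary~\ref{sc_h}, which holds because the prescribed choice $\eta_k = 4\|A^1\|^2/(k\alpha_1\mu_1)$ from \eqref{sto_stepsize_con3} together with $w_k = k$ gives $w_k\eta_k$ constant.

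The main obstacle, and the step that justifies the compound expression for $N_1$ in \eqref{sc_N1}, is to verify that the right-hand sides of \eqref{sc_g1} and \eqref{sc_g2}, each adjusted for $\bar\epsilon = 2\epsilon/3$, are bounded by $\epsilon$. The first argument of the $\max$ in \eqref{sc_N1} handles \eqref{sc_g1}: choosing $N_1 \ge 4\sqrt{3}\|A^1\|\|y_0^1\|/\sqrt{\alpha_1\mu_1\epsilon}$ forces the $\|y_0^1\|^2$ term to be at most $\epsilon/6$, while $N_1 \ge 4(6M_2)^2/(\alpha_1\mu_1\epsilon)$ forces the $M_2^2$ term to be at most $\epsilon/6$, so with the residual $\bar\epsilon = 2\epsilon/3$ the whole gap is at most $\epsilon$. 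The second argument handles \eqref{sc_g2} by splitting its three summands (the $\|y_*^1-y_0^1\|$ term, the $M_2$ term, and the $\sqrt{\bar\epsilon}$ term) and bounding each separately by $\epsilon/3$; the $\sqrt{2}$ inside the first additive block accounts for the $\sqrt{\bar\epsilon}=\sqrt{2\epsilon/3}$ piece, while the $M_2$ piece is absorbed into the $2/3$-power term.

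Finally, having $\bbe[{\rm gap}_\delta(\bar z_{N_1}^1)] \le \epsilon$ together with the a.s.\ validity of the SPDT updates allows me to invoke Lemma~\ref{lem_cone_prog} applied to the first-stage problem (with $\tilde v = v^2$) to conclude $\bbe[h^1(\bar x^1,c^1)+v^2(\bar x^1) - h^1(x^*,c^1)-v^2(x^*)] \le \epsilon$ and $A^1\bar x^1 - b^1 - \delta \in K^1$ almost surely, which, combined with $\bbe[\|\delta\|]\le\epsilon$ from the previous step, yields the three-part conclusion of the theorem.
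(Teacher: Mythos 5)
Your proposal is correct and follows essentially the same route as the paper's own proof: identify the outer loop as Algorithm~2 applied to \eqref{primaldual1} with $\tilde v = v^2$, invoke Lemma~\ref{lem_analysis_sc_stage2} for the $(2\epsilon/3)$-subgradients with variance bound $M_2$, apply Corollary~\ref{sc_h} with $\bar\epsilon = 2\epsilon/3$ and the choice of $N_1$ in \eqref{sc_N1} to bound $\bbe[{\rm gap}_\delta]$ and $\bbe[\|\delta\|]$ by $\epsilon$, and finish with Lemma~\ref{lem_cone_prog}. Your explicit accounting of how each summand of \eqref{sc_N1} controls each term of \eqref{sc_g1} and \eqref{sc_g2} is more detailed than the paper's (which only states the resulting inequalities), and is consistent with it.
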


\begin{proof}
%The outer loop of the DSA algorithm is equivalent to the application of Algorithm~2 to
%the first stage saddle point problem in~\eqnok{primaldual1}.
%Note that for this problem, we have $\tilde v = v^2$. Moreover,
By Lemma~\ref{lem_analysis_sc_stage2},
the stochastic subgradients of $v^2$ are computed by the
middle loop with tolerance $\bar \epsilon = 2\epsilon/3$. In view of Corollary~\ref{sc_h} (with $M = M_2$ and $\bar \epsilon = 2\epsilon/3$)
and the definition of $N_1$
in \eqnok{sc_N1}, we conclude that there exist $\delta \in  \bbr^{m^1}$ s.t.
\begin{align*}
\bbe[{\rm gap}_\delta(\bar z_N^1)] &\leq  \tfrac{8\|A^1\|^2\|y^1_0\|^2}{\alpha_1\mu_1(N_1+1)N_1} +\tfrac{24M_2^2}{\alpha_1\mu_1(N_1+1)}+ \tfrac{2 \epsilon}{3}\leq \epsilon,\\
  \bbe[\|\delta\|] &\leq \tfrac{16\|A^1\|^2\|y^1_*-y^1_0\|}{N_1(N_1+1)\alpha_1\mu_1}+\tfrac{8\sqrt{6}\|A^1\|M_2}{\alpha_1\mu_1N_1^{3/2}}+
  \tfrac{4\|A^1\|\sqrt{2\epsilon}}{(N_1+1)\sqrt{3\alpha_1\mu_1}}\le \epsilon,
\end{align*}
which together with Lemma~\ref{lem_cone_prog} then imply our result.
\end{proof}

\vgap

In view of Lemma~\ref{lem_analysis_sc_stage2} and Theorem~\ref{sto_sc_bound_all},
the number of random samples $\xi_2$ and $\xi_3$ will be bounded by $N_1$ and $N_1\times N_2$, i.e., $\mathcal{O}(1/\epsilon)$ and $\mathcal{O}(1/\epsilon^2)$, respectively,
under the assumption that the random variables appearing in the definition of $N_2$ (i.e., $A^2_i$ and $y^2_{*,i}$) are bounded.
%This analysis can be easily extended to the T-stage problems with the conclusion that the total number of scenarios needed to solve the problem with a certain $\epsilon$ error grows exponentially with the increase of the number of stages T.

\section{DSA for general multi-stage stochastic optimization}
In this section, we consider a multi-stage stochastic optimization problem given by
\begin{equation} \label{multi-stage prob}
\begin{array}{ll}
\min \ \left\{ h^1(x^1,c^1)+ v^{2}(x^1,\xi^1) \right\} \\
\text{ s.t.} \ \ A^1 x^1 - b^1 \in K^1,\\
\quad \quad \quad x^1 \in X^1,
\end{array}
\end{equation}
where the value factions $v^t$, $t =2, \ldots, T$, are recursively defined by
\begin{equation}\label{Defi_sto_V_m}
\begin{array}{lll}
 v^t(x^{t-1},\xi^{[t-1]}) &:=& F^{t-1} (x^{t-1},p^{t-1}) + \bbe [V^t(x^{t-1},\xi^{[t]})|\xi^{[t-1]}], \ \ t = 2, \ldots, T-1,\\
V^t(x^{t-1},\xi^{[t]}) &:= & \min \ \left\{ h^t(x^t,c^t)+ v^{t+1}(x^t) \right\}\\
 &&\ \text{  s.t.} \ \ A^tx^t - b^t -B^tx^{t-1} \in K^t,\\
 & & \quad \quad \quad x^t\in X^t,
 \end{array}
 \end{equation}
 and
\begin{equation}\label{Defi_sto_V1_m}
 \begin{array}{lll}
  v^T(x^{T-1},\xi^{[T-1]}) &:=& \bbe_{\xi^T} [V^T(x^{T-1},\xi^{[T]})|\xi^{[T-1]}],\\
V^T(x^{T-1},\xi^{[T]}) &:=&  \min \ h^T(x^T,c^T)\\
  &&\text{ s.t.} \ \ A^T x^T - b^T - B^T x^{T-1} \in K^T, \\
  & &\quad \quad \quad x^T\in X^T.
\end{array}
\end{equation}
%\begin{equation}\label{multi-stage prob}
%\begin{aligned}
%\min&\{ h^1(x^1,c^1)+F^1(x^1,p^1)+ \ ...+ &\bbe[\min &\{ h^T(x^T,c^T)+ F^T(x^T,p^T)\}]\}\\
% \text{s.t. } & A^1x^1-b^1 \in K^1 &\text{s.t.}&\ A^Tx^T - b^T-B^Tx^{T-1}\in K^T, \\
%      &x^1\in X_1,&  &\ x^T\in X_T,
%\end{aligned}
%\end{equation}
%\begin{equation}\label{multi-stage prob}
%\begin{array}{l}
%\min \{\ h^1(x^1,c^1)+F^1(x^1,p^1) + \bbe_{\xi^2} \{\min \{\ h^2(x^2,c^2) + \ldots + F^{T-1}(x^{T-1},p^{T-1}) + \bbe_{\xi^3}[\min  \{\ h^3(x^3, c^3)\}]\}\}\} \\
%  \text{s.t.} \ A^1 x^1 - b^1 \in K^1  \text{s.t.} \ A^2x^2 - b^2 -B^2x^1 \in K^2,   \text{s.t.} \ A^3x^3 - b^3-B^3x^2 \in K^3, \\
%       x^1\in X^1,  \ \ \  \ x^2\in X^2,\ \ \  \ x^3\in X^3,
%\end{array}
%\end{equation}
Here $\xi^t := (A^t,b^t,B^t,c^t,p^t)$ are random variables, $h^t(\cdot,c^t)$ are relatively simple functions, $F^t(\cdot,p^t)$ are general (not necessarily simple)
Lipschitz continuous convex functions and $K^t$ are convex cones, $\forall t=1,\ldots,T$. We also assume that one can compute the subgradient $F'(x^t,p^t)$ of
function $F^t(x^t,p^t)$ at any point $x^t\in X^t$ for a given parameter $p^t$.

Problem~\eqnok{multi-stage prob} is more general than problem \eqref{3-stage prob} (or equivalently problem~\eqnok{first-stage-cp}) in the
following sense. First, we are dealing with a more complicated multi-stage stochastic optimization problem where the number of stages $T$ \eqnok{multi-stage prob} can be greater than three.
Second, the value function $ v^t(x^{t-1}, \xi^{[t-1]})$ in \eqnok{Defi_sto_V_m}
is defined as the summation of  $F^{t-1} (x^{t-1},p^{t-1})$ and $ \bbe[V^t(x^{t-1},\xi^{[t]})|\xi^{[t-1]}]$,
where $F^{t-1}$ is not necessarily simple.
We intend to generalize the DSA algorithm in Sections~\ref{sec_DSA} and \ref{sec_DSA_sc} for solving problem~\eqnok{multi-stage prob}.
More specifically, we show how to compute a stochastic $\epsilon$-subgradient of $v^{t+1}$
at $x^t$, $t= 1, \ldots, T-2$, in a recursive manner until we obtain the $\epsilon$-subgradient of $v^T$ at $x^{T-1}$.

We are now ready to formally state the DSA algorithm for solving the multi-stage stochastic optimization problem in \eqnok{multi-stage prob}.
Observe that the following notations will be used in the algorithm:
%{\bf Notation:}
\begin{itemize}
  \item $N_t$ is the number of iterations for stage $t$ subproblem and $k_t$ is the corresponding index, i.e., $k_t = 1,\ldots, N_t$.
  \item $\xi_{k_{t-1}}^t= (A_{k_{t-1}}^t,b_{k_{t-1}}^t,B_{k_{t-1}}^t,c_{k_{t-1}}^t,p_{k_{t-1}}^t)$ is the $k_{t-1}$ th
  random scenarios in stage $t$ subproblem, $(x_{k_t}^t, y_{k_t}^t)$ are the $k_t$ th iterates in stage $t$ subproblem.
  \item For simplicity, we denote $\xi_{k_{t-1}}^t$ as $\xi_k^t$, $(x_{k_t}^t, y_{k_t}^t)$ as $(x_{k}^t, y_{k}^t)$.
\end{itemize}
%Since the objective function is no longer simple, we modified the SPDT procedure as follows,
%$(p_+, d_+) = {\rm SPDT}(p, d, d_{\_}, F', \tilde v', u, \xi, h, X, K_*, \theta, \tau, \eta)$:
%\begin{align}
%\tilde d &= \theta (d - d_{\_})+ d \label{def_dual_extra2}\\
%p_+ &= \argmin_{x \in X} \langle b+Bu-Ax, \tilde d  \rangle + h(x,c) +\langle F'+ \tilde v',x \rangle + \tau P_X(p,x) \label{def_primal_proj2}\\
%d_+ &=  \argmin_{y\in K_*} \langle -b-Bu+A p_+, y\rangle + \eta D_{K_*} (d,y) \label{def_dual_proj2}
%\end{align}

\begin{algorithm}[H]
\caption{DSA for multi-stage stochastic programs}
\begin{algorithmic}
\State {\bf Input:} initial points $\{x_0^t\}$, $k_t=1,\forall t,$ iteration number $N_t$ and stepsize strategy $\{w_k\}$.
\State Start with procedure ${\rm DSA}(1,0)$.
\State {\bf procedure:} ${\rm DSA}(t,u)$
\For {$k_t =1,\ldots, N_t$}
\If {$t < T$}
\State Generate random scenarios $\xi_k^{t+1}$.
\State $(\bar x^{t+1},\bar y^{t+1})= {\rm DSA}(t+1,x_{k}^t)$ and $G(x_{k-1}^t,\xi_{k}^{t+1})= (B_{k}^{t+1})^T\bar y^{t+1}$.
\Else
\State $G(x_{k-1}^T,\xi_{k}^{T+1}) = 0$.
\EndIf
\State $(x_k^t, y_k^t)={\rm SPDT}(x_{k-1}^t,y_{k-1}^t,y_{k-2}^t,G(x_{k-1}^t,\xi_{k}^{t+1}),u,\xi_{k-1}^{t},h^t,X^t,K_*^t,\theta_k^t,\tau_k^t,\eta_k^t)$.
\EndFor
\State {\bf return:} $\bar z^t = \tsum_{k=1}^{N_t} w_kz_k^t / \tsum_{k=1}^{N_t}w_k$.
%, where $z:=(x,y)$ and $(B_{k-1}^t)^T\bar y^{t}$ is an $\epsilon$-subgradient of $\bbe_{\xi^t}[V_{t}(u,\xi_{k-1}^T)]$.
\end{algorithmic}
\end{algorithm}

%\subsubsection{Convergence analysis for multi-stage programming}
%\begin{thm}
%Let the parameters $\{w_k\}$, $\{\tau_k\}$ and $\{\eta_k\}$ be chosen as in \eqref{step1}, and selecting specific starting points $y_0^1, \ldots, y_0^T$ such that $\|y_0^1\|=\ldots= \|y_0^T\|=0$, the total number of random samples $\xi^T$ to obtain an $\epsilon$-solution will be
%\begin{equation}\label{conv_multi}
%\prod_{i=1}^{T-1}N_i\geq \frac{1}{\epsilon^2(T-1)}\prod_{i=1}^{T-1}\tfrac{36(16M^2+5\sigma^2)\Omega_{X^i}^2}{\alpha_X}.
%\end{equation}
%\end{thm}

In order to show the convergence of the above DSA algorithm, we
need the following assumption on the boundedness of the operators $B^t$:
\begin{equation} \label{boundedBt}
\|B^t\| \le {\cal B}_t,\ \forall t=2,\cdots, T.
\end{equation}

Lemma~\ref{lem_analysis_multistage1} below establishes some convergence properties of the DSA algorithm
for solving the last stage problem.

\begin{lem} \label{lem_analysis_multistage1}
%Here we discuss the properties in the following two cases.
Suppose that the algorithmic parameters in the DSA algorithm applied to problem~\ref{multi-stage prob}
are chosen as follows.
\begin{description}
  \item[a)] For a general convex problem, $\{w_k^T\}$, $\{\tau_k^T\}$ and $\{\eta_k^T\}$ are set to \eqref{step1} (with $M = 0$ and $A = A_k^T$) and
\begin{equation} \label{N_T}
N_T \equiv N_{T,k} :=  \tfrac{T\sqrt{2}\|A_k^T\| [2(\Omega_T)^2+\|y^T_{*,k}-y^T_{0}\|^2]}{\sqrt{\alpha_T} \epsilon}.
\end{equation}
  \item[b)] Under the strongly convex assumption~\eqref{equ_strong1}, $\{w_k^T\}$, $\{\tau_k^T\}$ and $\{\eta_k^T\}$ are set to \eqref{sto_stepsize_con3} (with $M = 0$ and $A = A_k^T$) and
\begin{equation} \label{sc_N_T}
N_T \equiv N_{T,k} :=  \tfrac{\sqrt{8T}\|A_k^T\|\|y^T_{*,k}-y^T_{0}\|}{\sqrt{\alpha_T\mu_T\epsilon}}.
\end{equation}
\end{description}
Then $B_k^T \bar y_k^T$ is a stochastic $(\epsilon/T)$-subgradient of the value function $v^T$ at $x_{k-1}^{T-1}$.
Moreover, there exists a constant $M_T \ge 0$ such that $\|v^T(x_1,\xi^{[T-1]}) - v^T (x_2,\xi^{[T-1]}) \| \le M_T \|x_1 - x_2\|, \forall x_1, x_2 \in X^T$ and
\begin{equation} \label{def_M_T}
\bbe_{\xi^T}[\|B_k^T \bar y_k^T\|_*^2] \le M_T.
\end{equation}
\end{lem}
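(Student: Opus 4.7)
The plan is to mirror the proofs of Lemma~\ref{lem_analysis_stage3} (for part (a)) and Lemma~\ref{lem_analysis_sc_stage3} (for part (b)), with the key observation that at the terminal stage $T$ there is no subsequent value function, so $\tilde v \equiv 0$. This means exact (not stochastic) subgradient information is available for $\tilde v$, so we may invoke the relevant corollaries with the simplifications $M = 0$ and $\bar\epsilon = 0$.

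For part (a), I would first observe that the innermost recursive call ${\rm DSA}(T, x_{k-1}^{T-1})$ is precisely an instance of Algorithm~2 applied to the saddle point reformulation of \eqnok{Defi_sto_V1_m}, since $\tilde v$ is identically zero in \eqnok{primaldual2.2} for stage $T$. Applying Corollary~\ref{sto_coro 1} with $M = 0$ and $\bar\epsilon = 0$ to the bound \eqnok{g_bound0}, we obtain
\[
\bbe[{\rm gap}_*(\bar z_{N_T}^T)] \leq \tfrac{\sqrt{2}\|A_k^T\|(2\Omega_T^2 + \|y_{*,k}^T - y_0^T\|^2)}{\sqrt{\alpha_T}\, N_T}.
\]
Plugging in the value of $N_T$ from \eqnok{N_T} shows that this gap is at most $\epsilon/T$. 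Then Lemma~\ref{epsi_subg} immediately identifies $B_k^T \bar y_k^T$ as a stochastic $(\epsilon/T)$-subgradient of $V^T(\cdot,\xi_k^T)$ at $x_{k-1}^{T-1}$, and since $\bbe[B_k^T \bar y_k^T]$ remains an $(\epsilon/T)$-subgradient of the expectation $v^T$, the first claim follows.

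For part (b), the argument is identical except that under the strong convexity assumption \eqnok{equ_strong1} we invoke Corollary~\ref{sc_h} with $M = 0$ and $\bar\epsilon = 0$. The bound \eqnok{sc_g0} then reduces to
\[
\bbe[{\rm gap}_*(\bar z_{N_T}^T)] \leq \tfrac{8\|A_k^T\|^2\|y_0^T - y_{*,k}^T\|^2}{\alpha_T \mu_T (N_T+1) N_T},
\]
and the choice of $N_T$ in \eqnok{sc_N_T} guarantees this does not exceed $\epsilon/T$; Lemma~\ref{epsi_subg} again closes the subgradient claim.

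For the boundedness and Lipschitz claims in \eqnok{def_M_T}, I would use the dual iterate estimates \eqnok{g_bound2} in case (a) and \eqnok{sc_g3} in case (b), both with $M = 0$ and $\bar\epsilon = 0$. These yield $\bbe[\|y_{*,k}^T - \bar y_k^T\|^2] \leq \|y_{*,k}^T - y_0^T\|^2 + 4\Omega_T^2$ (respectively $\leq \|y_{*,k}^T - y_0^T\|^2$). Combining with the triangle inequality, the assumed boundedness of the optimal dual multiplier $y_{*,k}^T$ (justifiable via Lemma~\ref{lemma_dual_bnd}), and the operator norm bound $\|B_k^T\| \leq {\cal B}_T$ from \eqnok{boundedBt}, we obtain $\bbe[\|B_k^T \bar y_k^T\|_*^2] \leq M_T$ for a constant $M_T$ depending only on ${\cal B}_T$, $\Omega_T$, the initial point $y_0^T$, and the dual bound. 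The Lipschitz constant $M_T$ for $v^T$ then follows from the fact that any such bounded dual multiplier produces a subgradient of $V^T(\cdot,\xi^T)$ via Lemma~\ref{epsi_subg}, so the subgradients of $v^T = \bbe_{\xi^T}[V^T(\cdot,\xi^T)]$ are uniformly bounded by the same $M_T$.

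The routine part is simply substituting $M=0, \bar\epsilon=0$ into the earlier corollaries and matching constants with the definition of $N_T$; the only slightly delicate point is coordinating the two quantities called $M_T$ (the dual moment bound and the Lipschitz constant), both of which can be taken as a common upper bound without loss of generality.
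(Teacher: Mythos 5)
Your proposal is correct and follows essentially the same route as the paper's own proof: identify the innermost call as Algorithm~2 applied to the last-stage saddle point problem with $\tilde v\equiv 0$, invoke Corollary~\ref{sto_coro 1} (resp.\ Corollary~\ref{sc_h}) with $M=0$, $\bar\epsilon=0$ together with the choice of $N_T$ to get ${\rm gap}_*\le\epsilon/T$, apply Lemma~\ref{epsi_subg} for the subgradient claim, and use \eqnok{g_bound2} (resp.\ \eqnok{sc_g3}) plus \eqnok{boundedBt} and the boundedness of $y^T_{*,k}$ for \eqnok{def_M_T}. Your version is if anything slightly more careful than the paper's, since it makes explicit the passage from an $\epsilon/T$-subgradient of $V^T(\cdot,\xi^T)$ to one of $v^T$ and the role of Lemma~\ref{lemma_dual_bnd} in bounding the dual multiplier.
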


\begin{proof}
The innermost loop of the DSA algorithm is equivalent to the application of Algorithm~2 to
the last stage saddle point problem in~\eqnok{primaldual3}.
Note that for this problem, we do not have any subsequent stages and hence $\tilde v = 0$. In other words,
the subgradients of $\tilde v$ are exact. To show part a), in view of Corollary~\ref{sto_coro 1} (with $M = 0$ and $\bar \epsilon = 0$)
and the definition of $N_T$
in \eqnok{N_T}, we have
\[
\bbe[{\rm gap}_*(\bar z_k^T) | \xi^{[T-1]}] \le \tfrac{\sqrt{2}\|A_k^T\| [2(\Omega_T)^2+\|y^T_*-y^T_0\|^2]}{\sqrt{\alpha_T} N_T} \le \tfrac{\epsilon}{T}.
\]
This observation, in view of Lemma~\ref{epsi_subg}, then implies that
$B_k^T \bar y_k^T$ is a stochastic $(\epsilon/T)$-subgradient of $v^T$ at $x_{j-1}^{T-1}$.
Moreover, it follows from \eqnok{g_bound2} (with $M = 0$ and ${\bar \epsilon}=0$) that
\[
\bbe[\|y_{*,k}^T - \bar y_{k}^T\|^2| \xi^{[T-1]}] \le \|y_{*,k}^T - y_0^T\|^2 + 4 (\Omega_T)^2 + \tfrac{(N_T+1) \epsilon}{2}.
\]
This inequality, in view of the selection of $N_T$ in \eqnok{N_T}, the assumption that
$y_{*,k}^T$ is well-defined, and \eqnok{boundedBt}, then implies the latter part of our result.
Similarly, the result in \eqref{sc_N_T} follows from Corollary~\ref{sc_h} (with $M = 0$ and $\bar \epsilon = 0$)
and the definition of $N_T$
in \eqnok{sc_N_T}.
\end{proof}

\vgap

We show in Lemma~\ref{lem_analysis_multistage2} some convergence properties of
the middle loops of the DSA algorithm.

\begin{lem} \label{lem_analysis_multistage2}
Assume that the parameters for the innermost loop are set according to Lemma~\ref{lem_analysis_multistage1}.
Moreover, suppose that the algorithmic parameters for the middle loops are chosen as follows.
\begin{description}
  \item[a)] For general convex problem, the parameters $\{w_k^t\}$, $\{\tau_k^t\}$ and $\{\eta_k^t\}$ for the middle loops ($t=2,\ldots,T-1$)
  are set to \eqref{step2} (with $M = M_{t+1}$ and $A = A_k^t$) and
\begin{equation} \label{N_t}
N_t \equiv N_{t,k} :=  \left(\tfrac{ 4 \sqrt{2}T \|A^t_k\| \Omega_t}{\sqrt{\alpha_t} \epsilon} \right)^\frac{2}{3}
     + \left[\tfrac{2T\left(\|A^t_k\| \|y_{*,k}^t - y_0^t\|^2 + 4 \sqrt{3} M_{t+1} \Omega_t\right)}{\sqrt{\alpha_t} \epsilon}\right]^2.
\end{equation}
  \item[b)] Under strongly convex assumption~\eqref{equ_strong1}, the parameters $\{w_k^t\}$, $\{\tau_k^t\}$ and $\{\eta_k^t\}$ for the
  middle loops ($t=2,\ldots,T-1$) are set to \eqref{sto_stepsize_con4} (with $M = M_{t+1}$ and $A = A_k^t$) and
      \begin{equation} \label{sc_N_t}
N_t \equiv N_{t,k} := \tfrac{8T\|A_k^t\|^2\|y^t_0-y^t_{*,k}\|^2 +24TM_{t+1}^2}{\alpha_t\mu_t\epsilon}.
\end{equation}
\end{description}
Then $B_k^t \bar y_k^t$ is a stochastic $((T+1-t)\epsilon/T)$-subgradient of the value function $v^t$ at $x_{k-1}^{t-1}$.
Moreover, there exists a constant $M_t \ge 0$ such that $\|v^t(x_1,\xi^{[t-1]}) - v^t (x_2,\xi^{[t-1]}) \| \le M_t \|x_1 - x_2\|, \forall x_1, x_2 \in X^t$ and
\begin{equation} \label{def_M_t}
\bbe[\|B_k^t \bar y_k^t\|_*^2| \xi^{[t-1]}] \le M_t.
\end{equation}
\end{lem}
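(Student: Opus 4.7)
The plan is to prove this by backward induction on $t$, starting at $t=T-1$ and descending to $t=2$, in exact analogy to the argument used in Lemma~\ref{lem_analysis_stage2} for the three-stage case. The induction hypothesis at level $t+1$ is precisely the conclusion the lemma asserts at level $t+1$: namely, that $B_k^{t+1}\bar y_k^{t+1}$ is a stochastic $((T-t)\epsilon/T)$-subgradient of $v^{t+1}$ at $x_{k-1}^t$, and that $\bbe[\|B_k^{t+1}\bar y_k^{t+1}\|_*^2]\le M_{t+1}$ for some constant $M_{t+1}\ge 0$. The base case $t=T-1$ is supplied by Lemma~\ref{lem_analysis_multistage1}, which already gives the ``$\epsilon/T$-subgradient'' conclusion and the constant $M_T$.

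For the inductive step in part (a), I would apply Corollary~\ref{cor_bnd_dual} to the middle loop at stage $t$ with the substitutions $M \leftarrow M_{t+1}$, $\bar\epsilon \leftarrow (T-t)\epsilon/T$, and $A \leftarrow A_k^t$; this is justified because the induction hypothesis says the stochastic subgradients of $\tilde v = v^{t+1}$ fed into SPDT at stage $t$ are exactly $((T-t)\epsilon/T)$-subgradients with uniformly bounded second moment $M_{t+1}$. The bound \eqref{sto_cor2_a} on $\bbe[{\rm gap}_*(\bar z_k^t)]$ together with the choice of $N_t$ in \eqref{N_t} is designed so that
\[
\bbe[{\rm gap}_*(\bar z_k^t)] \le \frac{\epsilon}{T} + \bar\epsilon \;=\; \frac{(T+1-t)\epsilon}{T},
\]
and then Lemma~\ref{epsi_subg} immediately promotes $B_k^t \bar y_k^t$ to a stochastic $((T+1-t)\epsilon/T)$-subgradient of $v^t$ at $x_{k-1}^{t-1}$. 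Simultaneously, \eqref{sto_cor2_d} gives a uniform bound on $\bbe[\|y_{*,k}^t - \bar y_k^t\|^2]$ in terms of $\|y_{*,k}^t - y_0^t\|^2$, the primal diameter $\Omega_t$, the quantities $M_{t+1}$ and $\bar\epsilon$, and (through the $\sqrt{\alpha_t N_t}\,\epsilon/\|A_k^t\|$ term) a contribution that stays $\mathcal{O}(1)$ for the chosen $N_t$. Combining this with $\|B_k^t\|\le{\cal B}_t$ from \eqref{boundedBt} produces the constant $M_t$ in \eqref{def_M_t}. Part (b) is entirely parallel: one replaces Corollary~\ref{cor_bnd_dual} by Corollary~\ref{sc_h2} and uses the strongly convex stepsize rule \eqref{sto_stepsize_con4}; the choice of $N_t$ in \eqref{sc_N_t} again balances \eqref{sc_g4} against the accumulated tolerance $(T-t)\epsilon/T$, and \eqref{sc_g7} provides the required dual bound.

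The Lipschitz continuity of $v^t$ claimed in the statement also needs justification, but it drops out of the induction for free: given the hypothesis $\bbe[\|B_k^{t+1}\bar y_k^{t+1}\|_*^2]\le M_{t+1}$ together with the saddle-point characterization \eqref{primaldual2.2} and \eqref{strong_duality} of $V^t$, one obtains a uniform bound on the size of optimal dual multipliers at stage $t$, from which a Lipschitz modulus $M_t$ for $v^t$ on an $\epsilon$-neighborhood of $X^{t-1}$ follows by the standard argument leading to \eqref{bnd_subgradient}. This observation also covers the implicit requirement, needed to invoke Corollary~\ref{cor_bnd_dual} or Corollary~\ref{sc_h2}, that the Lipschitz constant of $\tilde v = v^{t+1}$ be bounded by $M_{t+1}$.

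The main obstacle is making sure the telescoping of tolerances does not degrade the overall rate: at stage $t$ we want a $((T+1-t)\epsilon/T)$-subgradient while we are fed a $((T-t)\epsilon/T)$-subgradient, so the extra tolerance budget available at stage $t$ is exactly $\epsilon/T$, and the sample size $N_t$ must be chosen large enough so that both the $\mathcal{O}(1/\sqrt{N_t})$ gap term in \eqref{sto_cor2_a} (respectively the $\mathcal{O}(1/N_t)$ term in \eqref{sc_g4}) and the $\mathcal{O}(\sqrt{N_t}\bar\epsilon)$ contribution to the dual second moment are kept under $\epsilon/T$ and $\mathcal{O}(1)$ respectively. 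Verifying that the two $N_t$ prescriptions in \eqref{N_t} and \eqref{sc_N_t} simultaneously achieve both bounds (with constants depending only on the primal diameters, $\|A_k^t\|$, $\mu_t$, and the inductive $M_{t+1}$) is the delicate part, but it reduces to plugging the tolerances into the corollaries and matching terms, exactly as in the three-stage analysis.
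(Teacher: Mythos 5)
Your proposal follows essentially the same route as the paper's own proof: apply Corollary~\ref{cor_bnd_dual} (resp.\ Corollary~\ref{sc_h2}) with $M = M_{t+1}$ and $\bar\epsilon = (T-t)\epsilon/T$, use the choice of $N_t$ to bring the gap bound down to $(T+1-t)\epsilon/T$, invoke Lemma~\ref{epsi_subg} for the subgradient claim, and combine \eqref{sto_cor2_d} (resp.\ \eqref{sc_g7}) with \eqref{boundedBt} for the second-moment bound. The only difference is presentational: you make the backward induction on $t$ explicit, whereas the paper leaves the recursion implicit by quoting the stage-$(t+1)$ conclusion directly; your version is, if anything, the more careful write-up.
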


\begin{proof}
The middle loops ($t=2,\ldots,T-1$) of the DSA algorithm applied to multistage stochastic optimization is equivalent to the application of Algorithm~2 to
the second stage saddle point problem in~\eqnok{primaldual2}.
Note that for this problem, we have $\tilde v = v^{t+1}$. Moreover, by Lemma~\ref{lem_analysis_multistage1},
the stochastic subgradients of $v^T$ are computed by the
innermost loop with tolerance $\bar \epsilon = \epsilon/T$. To show part a), in view of Corollary~\ref{cor_bnd_dual} (with $M = M_{t+1}$ and $\bar \epsilon = (T-t)\epsilon/T$)
and the definition of $N_t$
in \eqnok{N_t}, we have
\[
\bbe[{\rm gap}_*(\bar z_k^t) | \xi^{[t-1]} ] \le \tfrac{2 \sqrt{2} \|A^t_k\| \Omega_t}{N_t \sqrt{\alpha_t N_t}}
     + \tfrac{\|A^t_k\| \|y_{*,k}^t - y_0^t\|^2 + 4 \sqrt{3} M_{t+1} \Omega_t}{\sqrt{\alpha_t N_t}} +{\bar \epsilon}\le \tfrac{(T+1-t)\epsilon}{T}.
\]
This observation, in view of Lemma~\ref{epsi_subg}, then implies that
$B_k^t \bar y_k^t$ is a stochastic $((T+1-t)\epsilon/T)$-subgradient $v^{t}$ at $x_{k-1}^{t-1}$.
Moreover, it follows from \eqnok{sto_cor2_d} (with $M = M_{t+1}$ and ${\bar \epsilon}=(T-t)\epsilon/T$) that
\[
\bbe[\|y_{*,k}^t - \bar y_k^t\|^2| \xi^{[t-1]}] \le \|y_{*,k}^t - y_0^t\|^2 + \tfrac{2 \Omega_t^2}{N_t} + \tfrac{\sqrt{6} (1+\alpha_t) M_{t+1} \Omega_t}{\|A^t_k\|}
+ \tfrac{\sqrt{\alpha_t N_t} { \epsilon}}{3\sqrt{2} \|A^t_k\|}.
\]
This inequality, in view of the selection of $N_t$ in \eqnok{N_t}, the assumption that
$y_{*,k}^t$ is well-defined, and \eqnok{boundedBt}, then implies the latter part of our result. Similarly, in view of Corollary~\ref{sc_h2}, we have part b).
\end{proof}

\vgap

We are now ready to establish the main convergence properties of the DSA algorithm for solving general multi-stage stochastic optimization problems with $T \ge 3$.
\begin{thm}\label{sto_multistage_all}
Suppose that the parameters for the inner loops in the DSA algorithm are set according to Lemma~\ref{lem_analysis_multistage1}
and Lemma~\ref{lem_analysis_multistage2}. Moreover, assume that the algorithmic parameters
in the outer loop of the DSA algorithm are chosen as follows.
\begin{description}
  \item[a)] For general convex problem, the parameters $\{w_k\}$, $\{\tau_k\}$ and $\{\eta_k\}$ for the outer loop are set to \eqnok{step1} (with $M = M_2$ and $A = A^1$) and
\begin{equation} \label{N_1}
\begin{array}{l}
N_1 := \max \left\{ \tfrac{2\sqrt{2}T\|A^1\| [2(\Omega_1)^2+\|y_0^1\|^2]}{\sqrt{\alpha_1} \epsilon}+ \left(\tfrac{8\sqrt{3} T M_2\Omega_1}{\sqrt{\alpha_1} \epsilon}\right)^2, \right.\\
\quad \quad \left. \tfrac{6T\|A^1\| (\sqrt{2\alpha_1} \|y_*^1 - y_0^1\| + 2 \Omega_1) + 27(T-1) \sqrt{\alpha_1}\|A^1\|}{\alpha_1 T\epsilon} + \left( \tfrac{6 \sqrt{3} M_2 (\sqrt{2} \|A^1\| + \sqrt{\alpha_1}) }{\alpha_1 \epsilon}\right)^2
%\tfrac{18 \|A^1\|  }{ \sqrt{\alpha_1} \epsilon}
\right\}.
\end{array}
\end{equation}
  \item[b)] Under strongly convex assumption~\eqref{equ_strong1}, the parameters $\{w_k\}$, $\{\tau_k\}$ and $\{\eta_k\}$ for the outer loop are
  set to \eqnok{sto_stepsize_con3} (with $M = M_2$ and $A = A^1$) and
\begin{equation} \label{sc_N_1}
\begin{array}{l}
N_1 := \max \left\{\tfrac{4\sqrt{T}\|A^1\|\|y_0^1\|}{\sqrt{\alpha_1\mu_1\epsilon}}+\tfrac{24TM_2^2}{\alpha_1\mu_1\epsilon}, \right.\\
\quad \quad \left. \tfrac{4\sqrt{3}\|A^1\|\sqrt{\|y_*^1-y_0^1\|}}{\sqrt{\alpha_1\mu_1\epsilon}} + \left(\tfrac{24\sqrt{6}\|A^1\|M_2}{\alpha_1\mu_1\epsilon}\right)^{2/3} +\tfrac{12\|A^1\|\sqrt{T-1}}{\sqrt{\alpha_1\mu_1T\epsilon}}
%\tfrac{18 \|A^1\|  }{ \sqrt{\alpha_1} \epsilon}
\right\}.
\end{array}
\end{equation}
\end{description}
Then we will find a solution $\bar x^1 \in X^1$ and a vector $\delta \in \bbr^{m^1}$ s.t.
\[
  \begin{array}{l}
  \bbe[h(\bar x^1, c) + v^2(\bar x^1,\xi^{1}) - (h(x^*, c) + v^2(x^*,\xi^{1})) ] \le \epsilon,\\
   A \bar x^1 - b - \delta \in K^1, a.s., \\
 \bbe[\|\delta\|] \leq \epsilon,
\end{array}
\]
where $x^*$ denotes the optimal solution of problem~\ref{first-stage-cp}.
\end{thm}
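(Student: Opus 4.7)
The plan is to mirror the argument used for Theorems~\ref{sto_bound_all} and \ref{sto_sc_bound_all} in the three-stage setting, with the main modification being that the $\epsilon$-error budget must be distributed across $T$ stages rather than three. The outer loop of the DSA algorithm applied to \eqref{multi-stage prob} is equivalent to Algorithm~2 applied to the saddle-point reformulation of the first-stage problem (analogous to \eqref{primaldual1}), where $\tilde v = v^2$. By Lemma~\ref{lem_analysis_multistage2} instantiated at $t=2$, the output $(B_k^2)^T \bar y_k^2$ of the middle loop is a stochastic $((T-1)\epsilon/T)$-subgradient of $v^2$, so in the notation of Section~\ref{sec_DSA} the bias tolerance for the outer loop is $\bar\epsilon = (T-1)\epsilon/T$. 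Moreover, the same lemma bounds $\bbe[\|(B_k^2)^T \bar y_k^2\|_*^2] \le M_2$, so Assumption~\eqref{sto_assump1} is satisfied at the first stage with constant $M_2$.

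For part (a), I would apply Corollary~\ref{sto_coro 1} with $M = M_2$, $A = A^1$ and $\bar\epsilon = (T-1)\epsilon/T$ to obtain bounds of the form
\[
\bbe[{\rm gap}_\delta(\bar z_{N_1}^1)] \le \tfrac{\sqrt{2}\|A^1\|(2\Omega_1^2+\|y_0^1\|^2)}{\sqrt{\alpha_1}\,N_1} + \tfrac{4\sqrt{3}\,M_2\,\Omega_1}{\sqrt{\alpha_1 N_1}} + \tfrac{(T-1)\epsilon}{T},
\]
together with the companion bound on $\bbe[\|\delta\|]$ from \eqref{g_bound1}. Choosing $N_1$ as in \eqref{N_1} is designed precisely so that the first two (vanishing) terms of each of these estimates sum to at most $\epsilon/T$ and the total remains below $\epsilon$; I would verify this by matching the first term of the $N_1$ expression against the term involving $\|y_0^1\|$ and the second term against the $M_2$-dependent term, then do the same matching for the $\|\delta\|$ bound. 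An application of Lemma~\ref{lem_cone_prog} then converts the perturbed-gap bound into the desired objective-value gap and the almost-sure conic feasibility (up to the perturbation $\delta$ bounded in expectation by $\epsilon$), completing part (a).

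Part (b) proceeds in the same spirit but uses the strongly convex machinery of Section~\ref{sec_DSA_sc}: by Lemma~\ref{lem_analysis_multistage2}(b), the subgradient of $v^2$ coming from the middle loop is again an $((T-1)\epsilon/T)$-subgradient, with variance bounded by $M_2$. I would invoke Corollary~\ref{sc_h} with $M=M_2$, $A=A^1$, $\bar\epsilon=(T-1)\epsilon/T$ on the outer iteration and check that $N_1$ from \eqref{sc_N_1} makes each of $\bbe[{\rm gap}_\delta(\bar z_{N_1}^1)]$ and $\bbe[\|\delta\|]$ at most $\epsilon$, using the ${\cal O}(1/N)$ rather than ${\cal O}(1/\sqrt{N})$ dependence available under strong convexity. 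Lemma~\ref{lem_cone_prog} then finishes the argument identically.

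The main obstacle, and essentially the only nontrivial part, is the careful bookkeeping for the propagation of biases and dual-variable norm bounds through the $T$ nested recursive calls. One has to confirm by downward induction on $t$ that, assuming the parameters \eqref{N_T}--\eqref{sc_N_t} are in force, $(B_k^t)^T \bar y_k^t$ is a stochastic $((T+1-t)\epsilon/T)$-subgradient of $v^t$ with $\bbe[\|(B_k^t)^T \bar y_k^t\|_*^2]\le M_t$; the base case is Lemma~\ref{lem_analysis_multistage1} and the inductive step is Lemma~\ref{lem_analysis_multistage2}. In particular, one has to confirm that the dual-boundedness results \eqref{g_bound2} and \eqref{sc_g3} remain finite with the prescribed $N_t$, which is what justifies invoking Corollary~\ref{cor_bnd_dual}/\ref{sc_h2} (as opposed to Corollary~\ref{sto_coro 1}/\ref{sc_h}) at the intermediate stages. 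Once this propagation is verified, the application at $t=1$ is a direct quotation of the three-stage proof with $\epsilon/3$ replaced by $\epsilon/T$.
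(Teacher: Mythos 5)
Your proposal matches the paper's own proof essentially line for line: the outer loop is identified with Algorithm~2 applied to the first-stage saddle-point problem with $\tilde v = v^2$ and $\bar\epsilon = (T-1)\epsilon/T$ from Lemma~\ref{lem_analysis_multistage2}, Corollary~\ref{sto_coro 1} (resp.\ Corollary~\ref{sc_h}) is invoked with $M=M_2$, $A=A^1$ and the prescribed $N_1$ to drive $\bbe[{\rm gap}_\delta(\bar z_{N_1}^1)]$ and $\bbe[\|\delta\|]$ below $\epsilon$, and Lemma~\ref{lem_cone_prog} converts this into the stated optimality and feasibility guarantees. Your explicit remark about the downward induction on $t$ propagating the bias and dual-norm bounds is a correct (and slightly more careful) articulation of what the paper leaves implicit in Lemmas~\ref{lem_analysis_multistage1} and \ref{lem_analysis_multistage2}.
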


\begin{proof}
The outer loop of the DSA algorithm is equivalent to the application of Algorithm~2 to
the first stage saddle point problem in~\eqnok{primaldual1}.
Note that for this problem, we have $\tilde v = v^2$. Moreover, by Lemma~\ref{lem_analysis_multistage2},
the stochastic subgradients of $v^2$ are computed by the
middle loop with tolerance $\bar \epsilon = (T-1)\epsilon/T$. To show part a), in view of Corollary~\ref{sto_coro 1} (with $M = M_2$ and $\bar \epsilon = (T-1)\epsilon/T$)
and the definition of $N_1$
in \eqnok{N_1}, we conclude that there exist $\delta \in  \bbr^{m^1}$ s.t.
\begin{align*}
\bbe[{\rm gap}_\delta(\bar z_N^1)] &\leq  \tfrac{\sqrt{2}\|A^1\| (2\Omega_1^2+\|y_0^1\|^2)}{\sqrt{\alpha_1} N_1}+\tfrac{4\sqrt{3} M_2\Omega_1}{\sqrt{\alpha_1 N_1}} +
\tfrac{(T-1) \epsilon}{T} \le \epsilon,\\
  \bbe[\|\delta\|] &\leq \tfrac{2\sqrt{2\alpha_1}\|A^1\| \|y_*^1 - y_0^1\| + 4 \Omega_1 \|A^1\|}{\alpha_1 N_1}
+ \tfrac{2M_2 (\sqrt{6} \|A^1\| + \sqrt{3\alpha_1}) }{\alpha_1 \sqrt{N_1}} + \sqrt{\tfrac{3 \|A^1\| (T-1) \epsilon}{N_1 T\sqrt{\alpha_1}}} \le \epsilon,
\end{align*}
which together with Lemma~\ref{lem_cone_prog} then imply our result. Similarly, in view of Corollary~\ref{sc_h}, we have part b).
\end{proof}

\vgap

In view of the results stated in Lemma~\ref{lem_analysis_multistage1}, Lemma~\ref{lem_analysis_multistage2} and Theorem~\ref{sto_multistage_all},
the total number of scenarios required to find an $\epsilon$-solution of \eqnok{multi-stage prob} is given by $N_2 \times N_3 \times \ldots N_T$,
and hence will grow exponentially with respect to $T$, no matter the objective functions are strongly convex or not.
These sampling complexity bounds match well with
those in \cite{ShaNem04,sha06}, implying that
multi-stage stochastic optimization problems are essentially intractable for $T \ge 5$ and a moderate target accuracy.
Hence, it is reasonable to use the DSA algorithm only for multi-stage stochastic optimization problems with $T$ relatively small and $\epsilon$ relatively large.
However, it is interesting to point out that the DSA algorithm
only needs to go through the scenario tree once and hence its memory requirement increases only linearly with respect to $T$.
Moreover, the development of the complexity bounds of multi-stage stochastic optimization in terms of their dependence on
various problem parameters may help us to further explore the structure of the problems and to identify
special classes of problems possibly admitting faster solution methods.

It is also interesting to compare the DSA method with some other decomposition type algorithms.
As discussed in Section 1, in the sample average approximation approach, we
can apply a few different decomposition methods for solving the
deterministic counterpart of the multi-stage stochastic optimization problem.
These methods need to go through the whole scenario tree many times and hence
it is necessary to store the scenario tree first. One widely used decomposition method
is the stochastic dual dynamic programming (SDDP). Under the stage-wise
independence assumption, SDDP iteratively builds cutting plane models to approximate
the value functions starting from the last stage $T$ until the first stage (backward iteration), and
then generates feasible solutions starting from the first stage to the last stage (forward iteration).
This method is attractive for solving problems with a large number of
stages because its iteration cost only linearly depends on $T$.
On the other hand, as a common drawback for
cutting plane methods, SDDP converges slowly
as the number of decision variables in each stage increases~\cite{Nest04}. Improvement of cutting plane methods, e.g., based on the bundle-level method, however,
can only be applied to two-stage problems only (see \cite{Lan13-1} and references therein).
 Moreover,
 the rate of convergence of SDDP, i.e.,
how many number of forward and backward iterations it will take to achieve
a certain accurate solution, still remains unknown for multi-stage problems with $T \ge 3$,
although its asymptotic convergence has been established for multi-stage linear programming~\cite{Sha11} .

\section{Numerical experiment} \label{sec_num}
Our goal in this section is to report the results from our preliminary numerical experiments conducted
to test the efficiency of the DSA method applied to a class of
multi-stage asset allocation problems.

%\subsection{Multi-stage asset allocation problem}
We consider a classic multistage asset allocation problem due to Dantzig and Infanger \cite{dantzig1993multi} given by
\begin{equation}\label{portfolio}
\begin{array}{l}
\min_{x^0,p^1,q^1} \quad\quad \bbe \left[\min -u(\tsum_{i=1}^{n+1}x_i^1) \quad\quad\quad\quad\quad\quad +\ldots +  \bbe\left[ \min -u(\tsum_{i=1}^{n+1}x_i^T) \right] \right]\\
  \text{s.t.} 0\leq p_i^1\leq \bar p^1, \quad \ \text{s.t.}\  x_i^1 = R_i^1(x_i^0-p_i^1+q_i^1), \quad\quad\quad\quad \text{s.t.}\ x_i^T = R_i^T(x_i^{T-1}-p_i^T+q_i^T),\ \ i=1,\ldots,n,\\
   \quad\   0\leq q_i^1\leq \bar q^1,  \ \ \ \ \ \quad x_{n+1}^1 = x_{n+1}^0+\tsum_{i=1}^n(1-\hat p_i)p_i^1\ \quad\quad x_{n+1}^T = x_{n+1}^{T-1}+\tsum_{i=1}^n(1-\hat p_i)p_i^T,\\
  \quad\quad\quad\quad\quad\quad\quad\quad\quad\quad\quad\quad -\tsum_{i=1}^n(1+\hat q_i)q_i^1,\quad\quad\quad\quad\quad\quad\quad\quad -\tsum_{i=1}^n(1+\hat q_i)q_i^T.\\
   \quad  \tsum_{i=1}^{n+1} x_i^0=w_0,  \ \quad  \ \ 0\leq p_i^2\leq \bar p^2,\ \ \\ %\quad\quad\quad\quad\quad\quad\quad\quad\quad\ \ x_i^T\geq 0.\\
   \quad\quad   x_i^0\geq 0,  \ \ \ \quad\quad \ \ \ \ \ 0\leq q_i^2\leq \bar q^2, %\\
%   \quad\quad    \ \ \ \quad\quad \quad \quad\quad  \ \ \ \ \  x_i^2\geq 0,
\end{array}
\end{equation}
Here $p_i^t$ and $q_i^t$, respectively, denote the amount of asset $i$ that will be sold and purchased in period $t$,
$\hat p_i$ and $\hat q_i$, respectively, denote the transaction costs for selling and purchasing one unit of asset $i$,
and $R_i^t$ represent the factor of random return for asset $i$ from time $t$ to time $t+1$. %Buying and selling incur transaction costs which are proportional to the amount of asset traded.
%The value of $x_i^t$ changes to $R_i^t x_i^t$ at the end of period $t$.
%After time period $T$ no more decision is made.
Moreover, the utility function $u(\cdot)$
%, also known as
 %``Von Neumann-Morgenstern utility function",
 describes the investor's risk preference.
In particular, a linear utility function $u(\cdot)$ describes risk neutrality while a concave utility function models risk averseness.
At the initial time period $0$ the decision maker has a total amount of wealth $w_0$ in assets $i = 1, \ldots, n$ and in cash
(indexed as asset $n+1$ for notational convenience).
The dollar values of these initially available assets are denoted by $x_i^0,\ i= 1,\ldots,n+1$.
In each period of time, short-selling of assets and borrowing of cash are allowed when $x_i<0$, but there exist upper bounds $\bar p$ and $\bar q$ on
the selling and buying amount, respectively.
%Therefore, in general we formulate $$x_i^t\geq 0,\ 0\leq p_i^t\leq \bar p_i^t,\ 0\leq q_i^t\leq \bar q_i^t.$$
The goal of the decision maker is to maximize the expected utility $\bbe[u(\tsum_{i=1}^{n+1}x_i^T)]$ for the portfolio over $T$ periods of time.
%The problem is defined as in the paper. We consider the problem in discrete time and define time step $t =1,\ldots, T$, (e.g., by months), and T is the end of the planning horizon.

\subsection{Stagewise dependent random return}
Our goal in this subsection is to demonstrate that the DSA method does not require the stage-wise independence assumption
for the random returns. In this set of experiments, we model the correlation between asset returns using a factor model
\begin{equation}\label{return}
R^t = FV^t,
\end{equation}
which relates the asset returns $R^t = (R_1^t,\ldots, R_n^t)'$ to factors $V^t = (v_1^t,\ldots, v_h^t)'$
through a factor matrix $F \in \bbr^{n\times h}$. This factor model will allow us to
consider the stage-wise dependence, e.g., given by
%we may wish to have a higher probability of having a high rate of return in time $t$ if it was high in time $t-1$ than if it was low in time $t-1$. Then we model the dependency on the factors:
\begin{equation}\label{return_fact}
v_i^t = v_i^{t-1}+ \epsilon_i^t, \ i=1,\ldots, h,
\end{equation}
where $\epsilon_i^t$ denote the independent random variation of the factor $v_i$ in time $t$.
We collected the data of weekly returns for $1,887$ assets from Thomson Reuters Datastream
({\tt http://financial.thomsonreuters.com/}),
and use these data to fit the random return model.
We assume that the investor is risk averse with the utility function $u(\cdot)$ defined as the classic concave quadratic utility function \cite{pedersen2003utility}, i.e., $u(W) = W - bW^2$. %Our objective is to maximize the utility function, which is formed as $\min -u(W)$ in our model.
The value of $W$ and $b$ are chosen according to \cite{pedersen2003utility}.  We generate three instances (Inst 1, Inst 2 and Inst 3) which have a fixed number
of stages $3$, but with different number of assets ($5$, $200$ and $400$).

When implementing the DSA algorithm, we consider every outer mostest loop as one iteration and run the algorithm for $100$ iterations.
For the sake of convenience, we set $N_1 = \ldots=N_T =100$. Note that in order to estimate the function values for an output solution, we
generate $N$ realizations for the random vector $\{\epsilon^t\},t=1,\ldots,T-1$, and form a scenario tree consisting of $N^{T-1}$ random returns $R^{j,t}$ at level $t$ $\forall t=1,\ldots,T, j=1,\ldots,N^{T-1}$
according to \eqref{return} and \eqref{return_fact}. Then we will find a prefixed control policy $\{x^0,\bar p^t, \bar q^t\}, t=1,\ldots,T-1$ based on the
the output of the algorithm, and calculate other state variables according to
\begin{equation}\label{xt_relation}
  x_i^{j,t} = R_{i}^{j,t}(x_i^{t-1}-\bar p_i^{t}+ \bar q_i^{t}), \forall i = 1,\ldots, n.
\end{equation}
In other words, at stage 1, we will get $N$ feasible $\{x^{j,1}\},\forall j=1,\ldots,N$ by \eqref{xt_relation},
and at stage 2, we will get total $N^2$ feasible $\{x^{i,2}|x^{j,1},R^{i,2}\},\forall i=1,\ldots,N^2$ by \eqref{xt_relation} and so on.
Then we estimate the function value by %\eqref{funV} every one outermost loop for both DSA and SDDP.
\begin{equation}\label{funV}
FV = \tfrac{1}{N} \tsum_{j=1}^N\left[-u(x^{j,1})+ \tfrac{1}{N} \tsum_{i=N(j-1)+1}^{Nj}[-u(x^{i,2})+ \ldots]\right].
\end{equation}
It is worth noting that $FV$ estimates an upper bound on the objective value at $\{x^0\}$.
Nevertheless, our experimental results reported in Table \ref{DSA_results} indicates that DSA does converge for these problems with stagewise dependent return.
%\begin{figure}
%\center
%\includegraphics[width = 3.5in]{dataset.png}
%\end{figure}

\begin{table}
\caption{Problem parameters for stagewise dependent return}
\label{tProblemparameter}
 \begin{tabular}{||c|c|c|c|c|c|c||}
 \hline
 \multicolumn{1}{||c|}{}  & \multicolumn{1}{|c|}{$n$} & \multicolumn{1}{|c|}{$h$} &\multicolumn{1}{|c|}{$w_0$}  & \multicolumn{1}{|c|}{$\bar p = \bar q$} &
   \multicolumn{1}{|c|}{$\hat p = \hat q$} &\multicolumn{1}{|c||}{$T$} \\
 \hline
 \hline
Inst 1& 5 & 3& 3& 0.1 & 0.05 & 3 \\
Inst 2 & 200 & 70 & 500 & 1& 0.05 & 3\\
Inst 3 & 400 & 240& 1,000& 1& 0.05 & 3\\
 \hline
\end{tabular}
%\end{table}
%
%\begin{table}
\centering
\caption{Numerical results for DSA with stagewise dependent return}\label{DSA_results}
\begin{tabular}{|c|c|c|c|c|c|c||}
\hline
\hline
 & \#. of Iter.  & 0 & 10 & 20 &60 & 100  \\ \hline
\multirow{ 2}{*}{Inst 1} & FV & -4.0812 & -4.1047 & -4.1186 & -4.1704 & -4.2352  \\
& Time(s) & 0 & 1.96 & 4.02 & 12.37 & 21.00    \\ \hline
\multirow{ 2}{*}{Inst 2} & FV &-665.79 & -665.99 & -666.13 & -672.38 & -675.80  \\
& Time(s) & 0 & 12.38 & 24.77 & 77.40 & 126.55    \\ \hline
\multirow{ 2}{*}{Inst 3} & FV &-1.3326*e+3 & -1.3334*e+3 & -1.3337*e+3 & -1.3414*e+3 & -1.3493*e+3  \\
& Time(s) & 0 & 56.65 & 114.64 & 339.21 & 565.73    \\ \hline
\end{tabular}

\end{table}

\subsection{Stagewise independent return}
Our goal in the second set of experiments is to compare DSA with SDDP for solving problem~\eqnok{portfolio}.
Since SDDP cannot be directly applied for solving problems with stagewise dependent return,
in order to compare these two algorithms, we assume the random returns are stagewise independent given by
\begin{equation} \label{indep_R}
R^t = \mu+ \epsilon^t, \forall t =1,\ldots,T,
\end{equation}
where $\mu \sim\text{Uniform}[0.8,1.2]$, and $\epsilon^t \sim \text{Normal}(0,\sigma^2)$.
Given starting point $(\bar x_1, \bar x_2, \ldots, \bar x_T)$ and approximation of value function $\mathfrak{Q}_t$ for $t = 1,\ldots,T$,
each iteration of the SDDP algorithm consists of a forward step and a backward step to update the feasible solutions
and the approximate value functions, respectively. Since each stage is independent,
in SDDP, we use the sample size $N_1 = \ldots = N_T=100$ to generate the sample average approximation
problem first. In addition, the number of samples $M$ to compute the upper bound in the forward step at each stage is
set to $20$ (see~\cite{Sha11}).

\begin{table}
\caption{Problem parameters for stagewise independent data}
\label{tProblemparameter2}
 \begin{tabular}{||c|c|c|c|c|c|c||}
 \hline
 \multicolumn{1}{||c|}{}  & \multicolumn{1}{|c|}{$n$}  &\multicolumn{1}{|c|}{$w_0$}  & \multicolumn{1}{|c|}{$\bar p = \bar q$} &
   \multicolumn{1}{|c|}{$\hat p = \hat q$} &\multicolumn{1}{|c||}{$T$} & $\sigma$ \\
 \hline
 \hline
Inst 4& 5 & 3& 0.1 & 0.05 & 3 &0.05 \\
Inst 5 & 200  & 500 & 1& 0.05 & 3 & 0.1\\
Inst 6 & 400 & 1,000& 1& 0.05 & 3 & 0.2\\
Inst 7 & 5  & 3& 0.1 & 0.05& 4 &0.1 \\
Inst 8 & 5  & 3&0.1 & 0.05& 5 &0.1 \\
 \hline
\end{tabular}
\end{table}

We apply both DSA and SDDP to solve a few different problems instances of problem\eqnok{portfolio}
with parameters given in Table~\ref{tProblemparameter2}.
In particular, we consider two subgroups of instances. The first group (Inst 4, Inst 5 and Inst 6) has a fixed number
of stages $3$, but with different number of assets ($5$, $200$ and $400$), while the second group (Inst 4, Inst 7 and Inst 8)
has the same parameter setting except the number of stages changes from $3$ to $4$ or $5$.
Our hypothesis is that the DSA method can scale up with the dimension of the problem (i.e., the number of assets),
while SDDP can handle problems with a larger number of stages.
We first report the estimated
 function values (FV) for the obtained solutions  in Tables~\ref{com_1}, \ref{com_2} and \ref{com_3},
 where the first column
 represents the number of iterations for both algorithms,
 the second and fourth columns represent the estimated function values for DSA and SDDP, respectively,
 and the third and fifth columns are the recorded CPU times for DSA and SDDP, respectively.
 Note that in order to estimate the function values for a generated solution, we
  generate $N$ sequences random variation $\{\epsilon^t\},t=1,\ldots,T$, and get the random returns $R_j^t\in \bbr^n,\forall t=1,\ldots,T, j=1,\ldots,N$
  according to \eqref{indep_R}, then
 we compute feasible solution by \eqref{xt_relation} and estimate the function value by
$FV =  \tfrac{1}{N} \tsum_{j=1}^N\tsum_{t=1}^{T-1} -u(x^{j,t})$
with $N = 1000$. %\eqref{funV} every one outermost loop for both DSA and SDDP.

We observe from Tables~\ref{com_1}, \ref{com_2} and \ref{com_3} that both DSA and SDDP can obtain solutions with comparable quality,
and that DSA will significantly outperform SDDP in terms of computation time for instances with a small number of stages (e.g., $T =3$). Moreover,
from Table~\ref{com_4}, we can see that for the problem with larger number of stages and a small number of assets ($5$), the computation time for DSA
grows exponentially w.r.t. $T$ while the one for SDDP grows almost linearly.
From these preliminary numerical results, we indeed confirm that DSA can be used to handle multi-stage stochastic optimization
problems with a large number of decision (or state) variables, but a relatively smaller number of stages.
On the other hand, SDDP type algorithms can be used to solve problems with a larger number of stages but smaller number
of decision (or state) variables. These two types of algorithms seem to be complimentary to each other for solving multi-stage stochastic optimization problems.

\begin{table}
\begin{minipage}[b]{0.5\textwidth}
\caption{Comparison for instance 4}\label{com_1}
\label{comparison_1}
 \begin{tabular}{||c||c|c|c|c||}
 \hline
  & \multicolumn{2}{|c|}{DSA} & \multicolumn{2}{|c|}{SDDP} \\
 \hline
  & FV & Time(s) & FV & Time(s) \\
 \hline\hline
 0&-3.8456 & 0 & -3.8492 & 0 \\
 \hline
 10&-3.8737 & 2.37 & -4.1630 & 515.75 \\
 \hline
 20& -3.9156 & 4.40 & -4.1992 & 1154.79\\
 \hline
 60& -4.1121 & 12.26 & -4.2162 & 3023.62 \\
 \hline
 100& -4.1772 & 20.16 & -4.2028 & 5052.51 \\
 \hline
\end{tabular}
\end{minipage}
%\end{table}
%
%\begin{table}
\begin{minipage}[b]{0.5\textwidth}
\caption{Comparison for instance 5}\label{com_2}
\label{comparison_2}
 \begin{tabular}{||c||c|c|c|c||}
 \hline
  & \multicolumn{2}{|c|}{DSA} & \multicolumn{2}{|c|}{SDDP} \\
 \hline
  & FV & Time(s) & FV & Time(s) \\
 \hline\hline
 0&-670.66 & 0 & -670.46 & 0 \\
 \hline
 10&-670.91 & 13.13 & -701.92 & 1801.97 \\
 \hline
 20&-674.80 & 28.08 & -721.09 & 3959.05 \\
 \hline
 60& -713.29 & 89.23 & -721.89 & 17028.26 \\
 \hline
 100& -721.18 & 146.07 & -720.28 & 27826.86 \\
 \hline
\end{tabular}
\end{minipage}
\end{table}

\begin{table}
\caption{Comparison for instance 6}
\label{com_3}
 \begin{tabular}{||c||c|c|c|c||}
 \hline
  & \multicolumn{2}{|c|}{DSA} & \multicolumn{2}{|c|}{SDDP} \\
 \hline
  & FV & Time(s) & FV & Time(s) \\
 \hline\hline
 0& -1325.05 & 0 & -1323.31 & 0 \\
 \hline
 1&-1326.01 & 9.20 & -1327.53 & 347.28 \\
 \hline
 10& -1327.00 & 52.84 & -1427.14 & 3769.16 \\
 \hline
 50& -1412.54 & 295.47 & -1443.81 & 27957.58 \\
 \hline
 100& -1444.51 & 549.55 & -1455.11 & 53772.13 \\
 \hline
\end{tabular}
\end{table}

%\begin{figure}[htbp]
%\centering
%\begin{minipage}[t]{0.48\textwidth}
%\centering
%\includegraphics[width=6cm]{5_5.jpg}
%\caption{DSA for instance 1}
%\end{minipage}
%\begin{minipage}[t]{0.48\textwidth}
%\centering
%\includegraphics[width=6cm]{s1_5.jpg}
%\caption{SDDP for instance 1}
%\end{minipage}
%\end{figure}
%
%\begin{figure}[htbp]
%\centering
%\begin{minipage}[t]{0.48\textwidth}
%\centering
%\includegraphics[width=6cm]{2_200.jpg}
%\caption{DSA for instance 2}
%\end{minipage}
%\begin{minipage}[t]{0.48\textwidth}
%\centering
%\includegraphics[width=6cm]{s1_200.jpg}
%\caption{SDDP for instance 2}
%\end{minipage}
%\end{figure}
%
%
%
%\begin{figure}[htbp]
%\centering
%\begin{minipage}[t]{0.48\textwidth}
%\centering
%\includegraphics[width=6cm]{1_400.jpg}
%\caption{DSA for instance 3}
%\end{minipage}
%\begin{minipage}[t]{0.48\textwidth}
%\centering
%\includegraphics[width=6cm]{s1_400.jpg}
%\caption{SDDP for instance 3}
%\end{minipage}
%\end{figure}
%\newpage

\begin{table}
\centering
\caption{Computing time for instance 7 and 8}
\label{com_4}
 \begin{tabular}{||c||c|c|c|c||}
 \hline
  & \multicolumn{2}{|c|}{T=4} & \multicolumn{2}{|c|}{T=5} \\
 \hline
  & DSA & SDDP & DSA & SDDP \\
 \hline\hline
 1& 26.59 & 61.81 & 1964.09 & 80.73 \\
 \hline
 10& 191.02 & 531.95 & 18617.30 & 693.65 \\
 \hline
 20& 394.18 & 1134.03 & 38875.61 & 1441.82 \\
 \hline
 60& 1214.24 & 3505.01 & 118336.51 & 4452.99 \\
 \hline
 100& 2027.51 & 6035.81 & 191947.11 & 7461.19 \\
 \hline
\end{tabular}
\end{table}

\section{Conclusion} \label{sec-remark}
In this paper, we present a new class of stochastic approximation algorithms, i.e., dynamic stochastic approximation (DSA), for solving multi-stage stochastic optimization problems.
This algorithm is developed by reformulating the optimization problem in each stage as a saddle point problem and then recursively applying
an inexact primal-dual stochastic approximation algorithm to compute an approximate stochastic subgradient of the previous stage.
We establish the convergence of this algorithm by carefully bounding the bias and variance associated with
these approximation errors. For a three-stage stochastic optimization problem, we show that the total number of required scenarios
to find an $\epsilon$-solution is bounded by ${\cal O}(1/\epsilon^4)$ and ${\cal O}(1/\epsilon^2)$, respectively, for general convex and strongly convex cases.
These bounds are essentially not improvable in terms of their dependence on the target accuracy. We also generalize
DSA for solving multi-stage stochastic optimization problems with the number of stages $T > 3$. To the best of our knowledge,
this is the first time that stochastic approximation methods have been developed and their complexity is established for multi-stage stochastic optimization.

From the preliminary numerical results, we can see the DSA method is efficient for solving high dimensional problems with
a relatively smaller number of stages. However, as the number of stages increase, the computing time would increase exponentially even though
it can handle the case when random variable are stage-wise dependent.
Further improvement
on the practical performance of this method should be pursed
along the directions of better
estimating problem parameters especially those related to the size of subgradients and dual multipliers. It would be interesting to study
whether one can estimate these parameters in an online fashion while running these methods, and whether one can
further improve the convergence of DSA in terms of its dependence on these problem parameters, e.g., by using accelerated SA methods and some other
algorithmic schemes.

It is worth noting that there exist a class of alternative approaches based on  linear decision rule models for solving
multi-stage stochastic optimization problems. In these methods we assume that the decisions linearly depend
on the decisions previously made and the realization of random variables that have been observed so far.
Using this approach, one can reformulate a multi-stage stochastic optimization problem into a two-stage problem, and
hence can significantly reduce the computational cost. In comparison with the exact methods we focus on
in this paper, using linear decision rule models can only generate suboptimal solutions
for the original multi-stage stochastic optimization problems in general.

\renewcommand\refname{Reference}

\bibliographystyle{plain}
\bibliography{glan-bib}

\end{document}